\pgfplotsset{compat=1.11}
\newcommand{\R}{\mathbb{R}}
\newcommand{\Z}{\mathbb{Z}}
\newcommand{\N}{\mathbb{N}}
\newcommand{\eps}{\varepsilon}
\newcommand{\dis}{\displaystyle}
\newtheorem{theorem}{Theorem}
\newtheorem{lemma}[theorem]{Lemma}
\newtheorem{remark}[theorem]{Remark}
\begin{document}

\title{Multiscale Finite Element methods for advection-dominated problems in perforated domains}
\author{Claude Le Bris, Fr\'ed\'eric Legoll and Fran\c cois Madiot
\\
Ecole des Ponts and Inria,
\\
77455 Marne-la-Vall\'ee, France
\\
{\small \tt \{claude.le-bris,frederic.legoll,francois.madiot\}@enpc.fr}}

\maketitle

\begin{abstract}
We consider an advection-diffusion equation that is advection-domin\-ated and posed on a perforated domain. On the boundary of the perforations, we set either homogeneous Dirichlet or homogeneous Neumann conditions. The purpose of this work is to investigate the behavior of several variants of Multiscale Finite Element type methods, all of them based upon local functions satisfying weak continuity conditions in the Crouzeix-Raviart sense on the boundary of mesh elements. In the spirit of our previous works~\cite{lebris2013msfem,lebris2014msfem} introducing such multiscale basis functions, and of~\cite{lebris2015numerical} assessing their interest for advection-diffusion problems, we present, study and compare various options in terms of choice of basis elements, adjunction of bubble functions and stabilized formulations. 
\end{abstract}


\section{Introduction}

We consider a regular bounded open set~$\Omega\subset\R^d$, in dimension~$d\geq 2$, and its subset~$\displaystyle \Omega^\eps \subsetneq \Omega$, a domain perforated by holes of presumably small size $\eps>0$. We denote by~$B^\eps=\Omega\setminus \overline{\Omega^\eps}$ the set of perforations (see Figure~\ref{fig:perforation} below). Although this is by no means a limitation of our computational approaches, $B^\eps$ will often be, in the sequel, and in particular for our theoretical results, a periodic array of perforations, each of them of diameter of order~$\eps$ and separated by a distance also of order~$\eps$. On the perforated domain~$\Omega^\eps$, we consider the advection-diffusion equation 
$$
-\alpha \Delta u^\eps + \widehat{b}^\eps \cdot \nabla u^\eps = f \quad \text{in }\Omega^\eps,
$$
where $\alpha>0$, for a right-hand side $f \in L^2(\Omega)$ and for an advection field~$\widehat{b}^\eps$
on which we make a variety of assumptions. On the outer boundary $\partial\Omega$, we impose homogeneous Dirichlet boundary conditions. On the other hand, the equation is supplied either with homogeneous Dirichlet or homogeneous Neumann boundary conditions on the boundary of the perforations. More precisely, we concurrently consider the two problems
\begin{equation}
\left\{\begin{array}{rcl}
-\alpha \Delta u^\eps + \widehat{b}^\eps \cdot \nabla u^\eps &=& f \quad\text{in }\Omega^\eps,
\\
u^\eps &=& 0\quad\text{on }\partial\Omega^\eps,
\end{array}\right.
\label{pb_perforation_num}
\end{equation}
and
\begin{equation}
\left\{\begin{array}{rcl}
-\alpha \Delta u^\eps + \widehat{b}^\eps \cdot \nabla u^\eps &=& f \quad\text{in }\Omega^\eps,
\\
\alpha \nabla u^\eps \cdot n &=& 0 \quad\text{on }\partial\Omega^\eps\setminus\partial\Omega,
\\
u^\eps &=& 0\quad\text{on }\partial\Omega^\eps\cap\partial\Omega.
\end{array}\right.
\label{perf_neumann_pb_perforated}
\end{equation}

\medskip

\begin{figure}[htbp]
\psfrag{perf}{Perforations $B_\eps$}
\psfrag{bp}{Boundary $\partial B_\eps$ of the perforations}
\psfrag{dom}{Domain $\Omega_\eps$}
\centerline{
\includegraphics[width=7truecm]{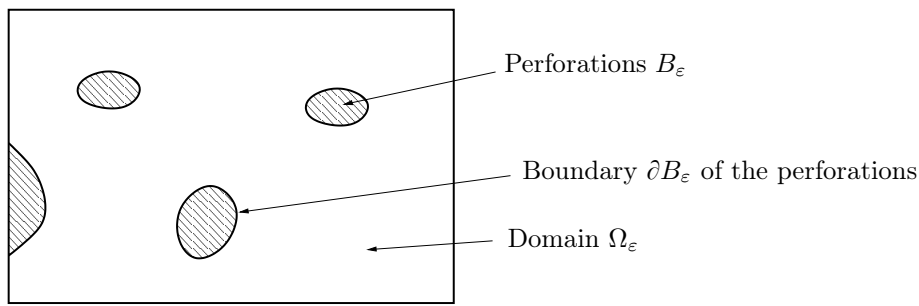}
}
\caption{The domain $\Omega$ contains perforations $B_\varepsilon$, some of which may intersect $\partial \Omega$. The perforated domain is $\Omega_\varepsilon=\Omega \setminus \overline{B_\varepsilon}$. The boundary of $\Omega_\varepsilon$ is the union of $\partial B_\varepsilon \cap \overline{\Omega_\varepsilon}$ (the part of the boundary of the perforations that is included in $\overline{\Omega_\varepsilon}$) and of $\partial \Omega \cap \overline{\Omega_\varepsilon}$.
\label{fig:perforation}}
\end{figure}
 
\medskip

We study a regime where advection dominates diffusion. In the absence of perforations, it is well-known that numerical instabilities arise for classical Finite Element methods~\cite{stynes2005steady}, and stabilization methods are in order. The case of perforated domains deserves a specific attention, since the perforations are presumable many, and asymptotically infinitely many. It will be seen, and this is not unexpected, that the choice of the boundary conditions on the perforations drastically affects the nature of the flow, and therefore the conclusions regarding which numerical approach performs satisfactorily or not. In short, homogeneous Dirichlet boundary conditions on the perforations damp the effect of advection, making the flow more stable than it would be in the absence of perforations, while this is not the case for homogeneous Neumann boundary conditions. This intuitive fact, which we will investigate thoroughly at the numerical level, is particularly well exemplified, at the theoretical level, by the comparison of the respective homogenization limits of the problems~\eqref{pb_perforation_num} and~\eqref{perf_neumann_pb_perforated} when $\eps$ vanishes. If $\dis \widehat{b}^\eps=b\left(\frac{\cdot}{\varepsilon}\right)$ is an oscillatory advection field with $b$ periodic, then the solution~$u^\eps$ to~\eqref{pb_perforation_num} converges to zero as $\eps^2$. Once renormalized by a factor~$\eps^{-2}$, $u^\eps$ converges, in the limit $\eps \to 0$, to the nontrivial solution of a problem where the advection field~$b$ has disappeared. We recall below the classical statement, Theorem~\ref{theorem-sans-adv}, that formalizes this result. To reinstate advection in the (rescaled) homogenization limit, which in the numerical practice formally means keeping advection dominant in~\eqref{pb_perforation_num} for $\eps$ small, we therefore consider $\dis \widehat{b}^\eps=\frac{1}{\eps} b\left(\frac{\cdot}{\varepsilon}\right)$ (see the theoretical result stated in Theorem~\ref{theorem_dirichlet_perf}) and then the problem is of practical interest. In sharp contrast, for the Neumann problem~\eqref{perf_neumann_pb_perforated} and the advection field $\dis \widehat{b}^\eps=b\left(\frac{\cdot}{\varepsilon}\right)$, not only the solution $u^\eps$ stays of order 1, instead of~$\eps^2$, but it converges, in a suitable sense, to the solution of an homogenized equation that does contain advection, as shown by the classical result we recall below in Theorems~\ref{perf_neumann_theorem_b_osc_div0} and~\ref{perf_neumann_theorem_b_osc_gene}.

\medskip

The numerical approaches we consider are variants of the Multiscale Finite Element method (MsFEM). We recall that MsFEM (see e.g.~\cite{efendiev2009multiscale}) encodes the multiscale character of the problem to be solved in the finite element basis functions by defining the latter as the solutions of independent local problems involving a differential operator identical, or close to that of the original equation. The finite element basis functions are defined independently of the source term and therefore are precomputed. A Galerkin approximation on the resulting approximation space is then performed. The choice of the boundary conditions imposed on the local problems is a critical issue. In~\cite{lebris2013msfem}, the first two authors of the present article have introduced Crouzeix-Raviart type boundary conditions for the local problems, in the case of the prototypical diffusion problem $-\hbox{div}\,(a_\eps\,\nabla u^\eps) = f$, for an highly oscillatory coefficient $a_\eps=a(\cdot/\eps)$. The approach has next been enriched with bubble functions to address the case of the same diffusion equation posed in a perforated domain, see~\cite{lebris2014msfem}. The main advantage of this particular choice of Crouzeix-Raviart type boundary conditions has been shown there to be the robustness of the approach with respect to the location of the perforations. The approximation remains accurate, irrespective of the fact that the boundaries of the mesh elements intersect or not the perforations, a sensitive issue for other types of boundary conditions. We also refer to~\cite{lozinski_stokes1,jankowiak_lozinski} for a similar study for the Stokes equation. The next step, performed in~\cite{lebris2015numerical}, has been to consider the advection diffusion equation $-\hbox{div}\,(a_\eps\,\nabla u^\eps) + \widehat{b}^\eps \cdot \nabla u^\eps = f$ instead of a pure diffusion equation, and to assume that the advection dominates diffusion to make the case strikingly different from pure diffusion and more interesting practically. A question of specific interest is whether or not the advection term must be introduced in the equation defining the local basis functions, and whether or not this brings more stability to the approach. Two of our main conclusions in~\cite{lebris2015numerical} were (i) that the multiscale character of the problem is actually reduced when advection considerably dominates diffusion and (ii) that, when the multiscale character is still important, a stabilized version of the MsFEM using basis functions defined by the diffusion operator only is one of the most effective and accurate approaches. The present work elaborates on all those previous works for advection-dominated advection diffusion equations {\em in perforated domains}.

\medskip

Our article is articulated as follows. We make precise the various numerical approaches we consider in Section~\ref{section_numerical_methods}. Some elements of theoretical analysis for the problems~\eqref{pb_perforation_num} and~\eqref{perf_neumann_pb_perforated} (assuming the perforations are periodic) are provided in Section~\ref{section_error_estimate}. In particular, we identify the homogenized limits for these two equations. We also state some rates of convergence towards the homogenized limit in the case of~\eqref{pb_perforation_num}. The detailed proofs of our theoretical results are postponed until Appendix~\ref{section_homogenization_results}.
Our numerical tests, and our conclusions, are presented in Section~\ref{section_numerical_results}. We explore there periodic test-cases (in Sections~\ref{section_numerical_dirichlet} and~\ref{section_numerical_neumann}) as well as a non-periodic test-case (in Section~\ref{sec:num-non-per}). In short, these conclusions are the following:

\begin{itemize}
\item for both problems~\eqref{pb_perforation_num} and~\eqref{perf_neumann_pb_perforated}, the method using a basis of functions built upon the full advection-diffusion operator enriched with bubble functions built likewise is the best possible approach whatsoever, and it does not require any additional stabilization, even in the case of problem~\eqref{perf_neumann_pb_perforated} for which instabilities may arise for very large advection fields.
\item if one does not wish to include the transport field in the definition of the basis functions, because this might be a difficulty either from the implementation viewpoint or in the context of varying advection fields, then
\begin{itemize}
\item for problem~\eqref{pb_perforation_num} (i.e. with homogeneous Dirichlet boundary conditions on the perforations, where the flow and the numerical solutions are both stable, even for considerably large advection fields), a possibility is to use an approach enriched with bubble functions, with all basis functions built upon the sole diffusion operator, and without any stabilization. However, this latter approach is not robust in the limit of large P\'eclet numbers (i.e. small values of $\alpha$ in~\eqref{pb_perforation_num}) or small values of $\eps$.
\item for problem~\eqref{perf_neumann_pb_perforated} (i.e. with homogeneous Neumann boundary conditions on the perforations, where instabilities due to the dominating advection arise), the best to do is to use a stabilized formulation with basis functions built with the sole diffusive part of the operator, with no enrichment by bubble functions. All other approaches are significantly less efficient.
\end{itemize}
\end{itemize}
These conclusions will be substantiated and commented upon in the sequel.

\section{Presentation of our numerical approaches}
\label{section_numerical_methods}

We introduce in this section the different variants of the MsFEM we will consider. All the variants use the Crouzeix-Raviart boundary conditions (that we have introduced in the previous works of ours~\cite{lebris2013msfem,lebris2014msfem}) on the boundary of mesh elements for the definition of the basis functions, including bubble functions. However, for the sake of comparison, we incidentally compare these approaches with approaches using other (e.g. linear) boundary conditions. 

\medskip

We will consider MsFEM approaches with Crouzeix-Raviart type conditions that 
\begin{itemize}
\item use basis functions defined with the full advection-diffusion operator (we abbreviate this into \emph{Adv-MsFEM}), or only the diffusive part of that operator (we abbreviate this into \emph{MsFEM}, and sometimes \emph{standard MsFEM} to avoid any ambiguity);
\item possibly enrich the approximation space spanned by these functions by adding bubble functions, the latter being either defined with the full advection-diffusion operator, or only the diffusive part of that operator;
\item possibly have stabilized variational formulations, with various options for the stabilization terms. 
\end{itemize}
We have considered in our investigations all combinations of the above options,
but we will only report here on the most useful ones. 

\medskip

Our approaches share the following setting. 

\medskip

First of all, at the continuous level, we note that, extending by zero inside the perforations $B^\eps$ a function in $H^1_0(\Omega^\eps)$, we can see this function as a function in $H^1_0(\Omega)$. In the case of Problem~\eqref{perf_neumann_pb_perforated}, the choice of an extension in $H^1_0(\Omega)$ is more delicate. One example of such an extension procedure can be found in~\cite[p. 603]{cioranescu1979homogenization}. 

For the discretization, we consider a uniform regular mesh $\mathcal{T}_H$ of $\Omega$, with mesh size~$H$. This mesh size is presumably much larger than what would be in order for a classical FEM applied to a problem with small scale $\eps$. Some actual range of values will be made precise below. We denote by $\mathcal{E}^{\text{in}}_H$ and $\mathcal{E}^{\text{ext}}_H$, respectively, the set of inner and outer edges/faces of the mesh $\mathcal{T}_H$ ($\mathcal{E}^{\text{ext}}_H$ is the set of edges lying in $\partial \Omega$).

For the study of Problem~\eqref{pb_perforation_num}, we define the following infinite-dimensional functional spaces:
\begin{equation}
  \label{eq:def_WH}
W_H =
\left\{\begin{array}{c}
u\in L^2(\Omega) \text{ such that $u|_K \in H^1(K)$ for all $K \in \mathcal{T}_H$},
\\
\dis \int_E [[u]]=0 \text{ for all $E \in \mathcal{E}^{\text{in}}_H$}, \quad u=0 \text{ in $B^\eps \cup \partial\Omega$}
\end{array}\right\},
\end{equation}
\begin{equation}
  \label{eq:def_WH0}
W^0_H =
\left\{
u\in W_H \text{ such that $\int_E u=0$ for all $E \in \mathcal{E}^{\text{in}}_H$}
\right\},
\end{equation}
and
\begin{equation}
 \label{eq:def_WH0b}
W^0_{H,\text{bubble}}=
\left\{\begin{array}{c}
\dis u\in W_H \text{ such that $\int_E u=0$ for all $E \in \mathcal{E}^{\text{in}}_H$}
\\
\dis \text{and $\int_K u=0$ for all $K\in \mathcal{T}_H$} 
\end{array}\right\}.
\end{equation}
In the case of Neumann boundary conditions, we introduce the same functional spaces $W_H$, $W^0_H$ and $W^0_{H,\text{bubble}}$ as above, except that in their definitions, $K$ and $E$ are respectively everywhere replaced by $K\cap\Omega^\eps$ and $E\cap\Omega^\eps$, while the homogeneous Dirichlet boundary condition is set only on the portion $\partial\Omega^\eps \cap \partial\Omega$ of the outer boundary.

\medskip

The variational formulation of the Dirichlet problem~\eqref{pb_perforation_num} reads as follows: find $u^\eps \in H^1_0(\Omega^\eps)$ such that, for any $v \in H^1_0(\Omega^\eps)$,
$$
a(u^\eps,v) = F(v)
$$
with
\begin{equation}
\label{eq:def_a_F}
a(u,v)= \int_{\Omega^\eps} \alpha \nabla u \cdot \nabla v + \left(\widehat{b}^\eps \cdot\nabla u\right) v
\quad \text{and} \quad
F(v) = \int_{\Omega^\eps} f \, v.
\end{equation}
Since $u^\eps$ vanishes on $\partial \Omega^\eps$, we can also consider the bilinear form
\begin{eqnarray*}
c(u,v)
&=&
\int_{\Omega^\eps} \alpha \nabla u \cdot \nabla v + \frac{1}{2} \left(\widehat{b}^\eps \cdot\nabla u\right) v -\frac{1}{2} \text{div} \left(\widehat{b}^\eps v \right)u
\\
&=&
\int_{\Omega^\eps} \alpha \nabla u \cdot \nabla v + \frac{1}{2} \left(\widehat{b}^\eps \cdot\nabla u\right) v - \frac{1}{2} \left(\widehat{b}^\eps \cdot\nabla v\right) u - \frac{1}{2} u \, v \, \text{div } \widehat{b}^\eps,
\end{eqnarray*}
with a skew-symmetric formulation of the advection-term. For any $u$ and $v$ in $H^1_0(\Omega^\eps)$, we have $a(u,v) = c(u,v)$. The variational formulation of~\eqref{pb_perforation_num} can thus be equivalently written: find $u^\eps \in H^1_0(\Omega^\eps)$ such that, for any $v \in H^1_0(\Omega^\eps)$,
$$
c(u^\eps,v) = F(v).
$$

The finite dimensional approximation spaces (that we introduce below) are not included in $H^1_0(\Omega^\eps)$, since Crouzeix-Raviart boundary conditions allow for discontinuous functions. Our approximations of~\eqref{pb_perforation_num} are therefore not conformal approximations. For the discrete variational formulations, we therefore introduce the following three bilinear forms:
\begin{eqnarray}
a_H(u,v) &=& \sum_{K\in\mathcal{T}_H} \int_{K\cap\Omega^\eps}\alpha\nabla u\cdot\nabla v+\left(\widehat{b}^\eps \cdot\nabla u\right) v,
\label{def_aH}
\\
a_{\text{diff}, H}(u,v)&=& \sum_{K\in\mathcal{T}_H} \int_{K\cap\Omega^\eps}\alpha\nabla u\cdot\nabla v,
\label{def_adiff_H}
\end{eqnarray}
and
\begin{multline}
c_H(u,v) \\
= \sum_{K\in\mathcal{T}_H} \int_{K\cap\Omega^\eps} \alpha \nabla u \cdot \nabla v + \frac{1}{2} \left(\widehat{b}^\eps \cdot\nabla u\right) v - \frac{1}{2} \left(\widehat{b}^\eps \cdot\nabla v\right) u - \frac{1}{2} u \, v \, \text{div } \widehat{b}^\eps,
\label{def_cH}
\end{multline}
which all involve broken integrals.

We easily observe that, on the broken space
$$
\left\{ v \in L^2(\Omega^\eps), \quad v \in H^1(K\cap\Omega^\eps) \text{ for any $K \in \mathcal{T}_H$}, \quad v=0 \text{ on $\partial \Omega^\eps$} \right\},
$$
and under the (classical) assumption $\text{div } \widehat{b}^\eps \leq 0$, we have
\begin{equation}
\label{eq:montreal}
c_H(v,v) \geq \alpha \sum_{K\in\mathcal{T}_H} \| \nabla v \|^2_{L^2(K\cap\Omega^\eps)}.
\end{equation}
Under mild additional constraints on the broken space (such as weak continuity of functions across element edges), we will obtain that $c_H$ is coercive. This is not the case for the bilinear form $a_H$. For this reason, we will favor the bilinear form $c_H$ over $a_H$ when considering the problem~\eqref{pb_perforation_num} (in that vein, see Remark~\ref{rem:cH_ou_aH} below).

\medskip

We next turn to the Neumann problem~\eqref{perf_neumann_pb_perforated}, the variational formulation of which reads as follows: find $u^\eps \in V^\eps$ such that, for any $v \in V^\eps$,
$$
a(u^\eps,v) = F(v)
$$
with $a$ and $F$ defined by~\eqref{eq:def_a_F} and
\begin{equation}
\label{eq:def_Veps}
V^\eps = \left\{ u\in H^1(\Omega^\eps) \text{ such that $u=0$ on $\partial\Omega^\eps \cap \partial\Omega$} \right\}.
\end{equation}
For this problem, there is no reason to consider the bilinear form $c$, as in general $a(u,v) \neq c(u,v)$ for $u$ and $v$ in $V^\eps$. Only the bilinear forms $a_H(u,v)$ and $a_{\text{diff}, H}(u,v)$ will be considered in that case.

\medskip

The finite dimensional approximation spaces we use (and which are in the sequel generically denoted by $V_H$ with various additional subscripts or superscripts) are spanned by functions $\Phi^{\eps,E}$ associated to the inner edges/faces $E\in \mathcal{E}^{\text{in}}_H$ and bubble functions $\Psi^{\eps,K}$ associated to each mesh element $K\in\mathcal{T}_H$. For the notation of these functions, we again use additional subscripts that depend on the specific situation considered. The reader should bear in mind that, in practice, only numerical approximations (on a fine mesh) of the functions $\Phi^{\eps,E}$ and $\Psi^{\eps,K}$ can be manipulated. 

\subsection{MsFEM approaches using only the diffusion operator, and their stabilized version}
\label{section_msfemB_CRB}

We successively consider the Dirichlet problem~\eqref{pb_perforation_num} (with the functional spaces $W_H$, $W^0_H$ and $W^0_{H,\text{bubble}}$ defined by~\eqref{eq:def_WH}, \eqref{eq:def_WH0} and~\eqref{eq:def_WH0b}) and the Neumann problem~\eqref{perf_neumann_pb_perforated} (with the corresponding functional spaces $W_H$, $W^0_H$ and $W^0_{H,\text{bubble}}$).

\subsubsection{Dirichlet problem~\eqref{pb_perforation_num}}
\label{section_msfemB_CRB_dirichlet}

\paragraph*{Variational formulations.}

The variational formulation of the standard MsFEM approach with Crouzeix-Raviart type boundary conditions reads as 
\begin{equation}
\text{Find $u_H \in V_H$ such that, for any $v_H \in V_H$}, \quad c_H(u_H,v_H) = F(v_H),
\label{varf_msfem_dirichlet-1}
\end{equation}
with $c_H$ defined in~\eqref{def_cH} and the finite dimensional approximation space $V_H\subset W_H$ given by
\begin{equation}
V_H = 
\left\{
u \in W_H \text{ such that $a_{\text{diff}, H}(u,v)=0$ for any $v \in W^0_H$}
\right\}.
\label{msfem_approximation_space_dirichlet_noB}
\end{equation}
The variational formulation for the variant using bubble functions is
\begin{equation}
\left\{
\begin{array}{c}
\text{Find $u_H \in V_{H,\text{bubble}}$ such that},
\\
\text{for any $v_H \in V_{H,\text{bubble}}$}, \quad c_H(u_H,v_H)=F(v_H),
\end{array}
\right.
\label{varf_msfem_dirichlet}
\end{equation}
where the finite dimensional approximation space $V_{H,\text{bubble}}\subset W_H$ is
\begin{equation}
V_{H,\text{bubble}} =
\left\{
u\in W_H \text{ such that $a_{\text{diff}, H}(u,v)=0$ for any $v \in W^0_{H,\text{bubble}}$}
\right\}.
\label{msfem_approximation_space_dirichlet}
\end{equation}

\medskip

The stabilized version of~\eqref{varf_msfem_dirichlet} (or, \emph{mutatis mutandis}, of~\eqref{varf_msfem_dirichlet-1}) that we use reads as 
\begin{equation}
\left\{
\begin{array}{c}
\text{Find $u_H\in V_{H,\text{bubble}}$ such that, for any $v_H\in V_{H,\text{bubble}}$},
\\
c_H(u_H,v_H) +a_{\text{stab}}(u_H,v_H) = F(v_H) +F_{\text{stab}}(v_H),
\end{array}
\right.
\label{varf_msfemB_SUPG_CRB}
\end{equation}
where the stabilization terms are defined by
\begin{align}
a_{\text{stab}}(u_H,v_H) &= \sum_{K\in\mathcal{T}_H}\left(\tau_\mathbf{K} \left(-\alpha\Delta u_H+\widehat{b}^\eps\cdot \nabla u_H\right),\widehat{b}^\eps\cdot\nabla v_H\right)_{L^2(K\cap\Omega^\eps)},
\label{term_stab_ideal}
\\
F_{\text{stab}}(v_H) &= \sum_{K\in\mathcal{T}_H} \left(\tau_\mathbf{K}f ,\widehat{b}^\eps\cdot\nabla v_H\right)_{L^2(K\cap\Omega^\eps)},
\nonumber
\end{align}
with $\dis \tau_\mathbf{K}(x)=\frac{H}{2 \left| \widehat{b}^\eps(x) \right|} \, \left[ \coth \left( \frac{\left| \widehat{b}^\eps(x) \right| \, H}{2\alpha} \right)-\frac{2\alpha}{\left|\widehat{b}^\eps(x)\right| \, H}\right]$.
In the case when $\text{div } \widehat{b}^\eps = 0$, the skew-symmetric part of the operator ${\cal L}v = -\alpha\Delta v+\widehat{b}^\eps\cdot \nabla v$ is ${\cal L}_{\text{ss}} v = \widehat{b}^\eps\cdot \nabla v$ and thus~\eqref{term_stab_ideal} corresponds to a SUPG stabilization.

\paragraph*{Description of the basis functions.}

We now make precise the definition of the basis functions (the well-posedness of the problems~\eqref{pb_msfem_basis_dirichlet} and~\eqref{pb_msfem_bubble_dirichlet} below is established in~\cite{lebris2014msfem}). For any $E\in \mathcal{E}^{\text{in}}_H$, we introduce the function $\Phi^{\eps,E}_0$ which is such that, for all mesh elements $K\in\mathcal{T}_H$,
\begin{equation}
\left\{\begin{aligned}
&-\alpha\Delta \Phi^{\eps,E}_0 = 0 \quad\text{in }K\cap\Omega^\eps,\\
&\Phi^{\eps,E}_0 = 0 \quad \text{ in }K\cap B^\eps,\\
&\text{if $E' \in \mathcal{E}^{\text{in}}_H \cap \partial K$}, \quad \int_{E'} \Phi^{\eps,E}_0 = \delta_{E,E'} \quad \text{ and } \quad \alpha\nabla \Phi^{\eps,E}_0\cdot n= \lambda^{K,E'} \ \text{on $E'\cap\Omega^\eps$},\\
&\text{if $E' \in \mathcal{E}^{\text{ext}}_H \cap \partial K$}, \quad \Phi^{\eps,E}_0 = 0 \quad \text{on $E'\cap\Omega^\eps$},
\end{aligned}\right.
\label{pb_msfem_basis_dirichlet}
\end{equation}
where $\lambda^{K,E'}$ is constant. The function $\Phi^{\eps,E}_0$ is supported in the elements $K$ for which $E \subset \partial K$. 

We also define, for $K\in\mathcal{T}_H$, the bubble function $\Psi^{\eps,K}_0$, the support of which is reduced to $K\cap\Omega^\eps$, as the solution to 
\begin{equation}
\left\{\begin{aligned}
&-\alpha\Delta \Psi^{\eps,K}_0 = 1 \quad\text{in }K\cap\Omega^\eps,\\
&\Psi^{\eps,K}_0 = 0 \quad \text{ in }K\cap B^\eps,\\
&\text{if $E' \in \mathcal{E}^{\text{in}}_H \cap \partial K$}, \quad \int_{E'} \Psi^{\eps,K}_0 = 0 \quad \text{ and } \quad \alpha\nabla \Psi^{\eps,K}_0\cdot n= \mu^{K,E'} \ \text{on $E'\cap\Omega^\eps$},\\
& \text{if $E' \in \mathcal{E}^{\text{ext}}_H \cap \partial K$}, \quad \Psi^{\eps,K}_0= 0 \quad \text{on $E'\cap\Omega^\eps$},
\end{aligned}\right.
\label{pb_msfem_bubble_dirichlet}
\end{equation}
where $\mu^{K,E'}$ is constant.

\begin{remark}
In the very particular case when some $E \in \mathcal{E}^{\text{in}}_H$ satisfies $E \subset B^\eps$, then we simply set $\Phi^{\eps,E}_0 \equiv 0$ (and likewise for $\Psi^{\eps,K}_0$ if some $K\in\mathcal{T}_H$ satisfies $K \subset B^\eps$). The same remark holds for the other approaches presented below.
\end{remark}

\medskip

We then have
$$
V_H = \text{Span} \, \left\{ \Phi^{\eps,E}_0, \quad E \in \mathcal{E}^{\text{in}}_H \right\}
$$
and
\begin{equation}
\label{eq:spaces_dirichlet3}
V_{H,\text{bubble}} = \text{Span} \, \left\{ \Phi^{\eps,E}_0, \ \Psi^{\eps,K}_0, \quad E \in \mathcal{E}^{\text{in}}_H, \ K \in \mathcal{T}_H \right\}.
\end{equation}

\paragraph*{Details on the stabilized formulations.}

Given the above basis functions, we can obtain a simpler expression of the term~\eqref{term_stab_ideal} by decomposing $u_H\in V_{H,\text{bubble}}$ as
$$
u_H=\sum_{E\in\mathcal{E}^{\text{in}}_H} U_H^E \, \Phi^{\eps,E}_0 + \sum_{K\in\mathcal{T}_H} U_H^K \, \Psi^{\eps,K}_0.
$$
Following the definition of the basis functions, we have
\begin{multline}
a_{\text{stab}}(u_H,v_H) = \sum_{K\in\mathcal{T}_H}\left(\tau_\mathbf{K} \left(\widehat{b}^\eps\cdot \nabla u_H\right),\widehat{b}^\eps\cdot\nabla v_H\right)_{L^2(K\cap\Omega^\eps)}
\\
+ \sum_{K\in\mathcal{T}_H} U_H^K \int_{K\cap\Omega^\eps}\tau_\mathbf{K}\left(\widehat{b}^\eps\cdot\nabla v_H\right).
\label{term_stab_real}
\end{multline}

In practice, we make use of a discrete approximation of the basis functions on a fine mesh $K_h$, and~\eqref{term_stab_ideal} may not be defined in general. For example, if we use a $\mathbb{P}^1$ approximation on a fine mesh $K_h$ for the local problems, then $\nabla u_{H,h}$ may be discontinuous at the interfaces of $K_h$. As a consequence, we have that 
$$
-\Delta u_{H,h}\notin L^2(K\cap\Omega^\eps)
$$
and the stabilization term~\eqref{term_stab_ideal} has no natural expression when we work with the discretized approximation space $(V_{H,\text{bubble}})_h$ rather than $V_{H,\text{bubble}}$. There are (at least) two options to circumvent the difficulty.

The first option is to use
\begin{equation}
\widetilde{a}_{\text{stab}}(u_{H,h},v_{H,h})= \sum_{K\in\mathcal{T}_H}\sum_{\kappa\subset K_h}\left(\tau_K \left(-\alpha\Delta u_{H,h} + \widehat{b}^\eps\cdot \nabla u_{H,h}\right),\widehat{b}^\eps\cdot\nabla v_{H,h}\right)_{L^2(\mathbf{\kappa})}
\label{adv_diff_supg_fine}
\end{equation} 
rather than~\eqref{term_stab_ideal}. This yields a strongly consistent stabilized method. 

The second option, and this is the variant we adopt, is to use the stabilization term~\eqref{term_stab_real} rather than~\eqref{term_stab_ideal}. In contrast to~\eqref{term_stab_ideal}, the quantity~\eqref{term_stab_real} is also well defined on $(V_{H,\text{bubble}})_h$. We point out that this stabilization approach is not strongly consistent. We however note that we have already used (for the same reasons as here) this type of non-strongly consistent stabilization approach in~\cite{lebris2015numerical}, where we were able to show the convergence of the approach (see~\cite[Section 3.2]{lebris2015numerical}).

\subsubsection{Neumann problem~\eqref{perf_neumann_pb_perforated}}
\label{section_msfemB_CRB_neumann}

\paragraph*{Variational formulations.}

The variational formulations for the Neumann problem read as~\eqref{varf_msfem_dirichlet-1}, \eqref{varf_msfem_dirichlet} and~\eqref{varf_msfemB_SUPG_CRB}, with $c_H$ replaced by $a_H$ defined in~\eqref{def_aH}.

\paragraph*{Basis functions and stabilized formulations.}

For problem~\eqref{perf_neumann_pb_perforated}, the systems~\eqref{pb_msfem_basis_dirichlet} and~\eqref{pb_msfem_bubble_dirichlet} are respectively replaced by the following two systems (we temporarily use the same notation for the Dirichlet and the Neumann problems):
\begin{equation}
\left\{\begin{aligned}
&-\alpha\Delta \Phi^{\eps,E}_0 = 0 \quad\text{in }K\cap\Omega^\eps,\\
&\alpha\nabla \Phi^{\eps,E}_0\cdot n = 0 \quad \text{ in }K\cap \partial B^\eps,\\
&\text{if $E' \in \mathcal{E}^{\text{in}}_H \cap \partial K$}, \quad \int_{E'\cap\Omega^\eps} \Phi^{\eps,E}_0 = \delta_{E,E'} \quad \text{ and } \quad \alpha\nabla \Phi^{\eps,E}_0\cdot n= \lambda^{K,E'} \ \text{on $E'\cap\Omega^\eps$},\\
&\text{if $E' \in \mathcal{E}^{\text{ext}}_H \cap \partial K$}, \quad \Phi^{\eps,E}_0 = 0 \quad \text{on $E'\cap\Omega^\eps$},
\end{aligned}\right.
\label{pb_msfem_basis_neumann}
\end{equation}
where $\lambda^{K,E'}$ is constant, and
\begin{equation}
\left\{\begin{aligned}
&-\alpha\Delta \Psi^{\eps,K}_0 = 1 \quad\text{in }K\cap\Omega^\eps,\\
&\alpha\nabla \Psi^{\eps,K}_0\cdot n = 0 \quad \text{ in }K\cap \partial B^\eps,\\
&\text{if $E' \in \mathcal{E}^{\text{in}}_H \cap \partial K$,} \quad \int_{E'\cap\Omega^\eps} \Psi^{\eps,K}_0 = 0 \quad \text{ and } \quad \alpha\nabla \Psi^{\eps,K}_0\cdot n= \mu^{K,E'} \ \text{on $E'\cap\Omega^\eps$},\\
& \text{if $E' \in \mathcal{E}^{\text{ext}}_H \cap \partial K$}, \quad \Psi^{\eps,K}_0= 0 \quad\text{on $E'\cap\Omega^\eps$},
\end{aligned}\right.
\label{pb_msfem_bubble_neumann}
\end{equation}
where $\mu^{K,E'}$ is constant.

\medskip

For~\eqref{perf_neumann_pb_perforated}, we again work with the stabilization term~\eqref{term_stab_real}.

\subsection{MsFEM approaches using the full advection-diffusion operator, and their stabilized version}
\label{section_adv_msfem_CRB}

In this variant, we use basis functions that depend on the advection field.

\subsubsection{Dirichlet problem~\eqref{pb_perforation_num}}
\label{section_adv_msfem_CRB_dir}

\paragraph*{Variational formulations.}

When no bubble functions are used to enrich the approximation space, the variational formulation, for the study of Problem~\eqref{pb_perforation_num}, reads as 
\begin{equation}
\text{Find $u_H \in V^{\text{adv}}_H$ such that, for any $v_H \in V^{\text{adv}}_H$}, \quad c_H(u_H,v_H)=F(v_H),
\label{varf_adv_msfem_dirichlet_noB}
\end{equation}
with $c_H$ defined by~\eqref{def_cH} and
\begin{equation}
V^{\text{adv}}_H
=
\left\{
u\in W_H \text{ such that $c_H(u,v)=0$ for any $v \in W^0_H$}
\right\}.
\label{adv_msfem_approximation_space_dirichlet_noB}
\end{equation}
Note that, in contrast to $V_H$ defined by~\eqref{msfem_approximation_space_dirichlet_noB}, the space $V^{\text{adv}}_H$ is defined by orthogonality using the bilinear form $c_H$, and not $a_{\text{diff}, H}$.

When using bubble functions, we consider the variational formulation
\begin{equation}
\left\{
\begin{array}{c}
  \text{Find $u_H \in V^{\text{adv bubble}}_H$ such that},
  \\
  \text{for any $v_H \in V^{\text{adv bubble}}_H$}, \quad c_H(u_H,v_H)=F(v_H),
  \end{array}
\right.
\label{varf_adv_msfem_dirichlet}
\end{equation}
where
\begin{equation}
V^{\text{adv bubble}}_H=\left\{
u\in W_H \text{ such that $c_H(u,v)=0$ for any $v \in W^0_{H,\text{bubble}}$}
\right\}.
\label{adv_msfem_approximation_space_dirichlet}
\end{equation}

\medskip

The stabilized version of the formulation~\eqref{varf_adv_msfem_dirichlet} reads as
\begin{equation}
\left\{
\begin{array}{c}
\text{Find $u_H\in V^{\text{adv bubble}}_H$ such that, for any $v_H\in V^{\text{adv bubble}}_H$},
\\
c_H(u_H,v_H) + a_{\text{stab}}(u_H,v_H) = F(v_H) +F_{\text{stab}}(v_H),
\end{array}
\right.
\label{varf_msfemA_SUPG_CRB_dirichlet}
\end{equation}
with again $V^{\text{adv bubble}}_H$ defined by~\eqref{adv_msfem_approximation_space_dirichlet}. For the same reasons as those for which we favor~\eqref{term_stab_real} over~\eqref{adv_diff_supg_fine}, we choose the stabilization defined by
\begin{equation}
a_{\text{stab}}(u_H,v_H)= \sum_{K\in\mathcal{T}_H} U_H^K \int_{K\cap\Omega^\eps} \tau_{\mathbf{K}} \left(\widehat{b}^\eps\cdot\nabla v_H\right).
\label{term_stab_adv_msfem_real}
\end{equation}
Note that, for the formulation~\eqref{varf_adv_msfem_dirichlet_noB}, the stabilization is void as $a_{\text{stab}}(u_H,v_H) = 0$ for any $u_H \in V^{\text{adv}}_H$.

\paragraph*{Description of the basis functions.}

Similarly as in Section~\ref{section_msfemB_CRB_dirichlet}, we now make explicit a basis of functions for our approximation spaces. For any $E\in \mathcal{E}^{\text{in}}_H$, the function $\Phi^{\eps,E}_{\text{D}}$ is defined by 
\begin{equation}
\left\{\begin{aligned}
&-\alpha \Delta \Phi^{\eps,E}_{\text{D}} + \widehat{b}^\eps \cdot \nabla \Phi^{\eps,E}_{\text{D}} = 0 \quad\text{in }K\cap\Omega^\eps,\\
&\Phi^{\eps,E}_{\text{D}} = 0 \quad \text{ in }K\cap B^\eps,\\
&\text{if $E' \in \mathcal{E}^{\text{in}}_H \cap \partial K$}, \quad \int_{E'} \Phi^{\eps,E}_{\text{D}} = \delta_{E,E'}
\\
& \qquad \qquad \text{ and } \quad \left(\alpha\nabla \Phi^{\eps,E}_{\text{D}} - \frac{1}{2} \widehat{b}^\eps \Phi^{\eps,E}_{\text{D}} \right) \cdot n = \lambda^{K,E'} \ \text{on $E'\cap\Omega^\eps$},\\
& \text{if $E' \in \mathcal{E}^{\text{ext}}_H \cap \partial K$}, \quad \Phi^{\eps,E}_{\text{D}} = 0 \quad \text{on $E'\cap\Omega^\eps$}, 
\end{aligned}\right.
\label{pb_adv_msfem_basis_dirichlet}
\end{equation}
while the bubble function $\Psi^{\eps,K}_{\text{D}}$ is the solution to 
\begin{equation}
\left\{\begin{aligned}
&-\alpha\Delta \Psi^{\eps,K}_{\text{D}}+\widehat{b}^\eps\cdot\nabla \Psi^{\eps,K}_{\text{D}} = 1 \quad\text{in }K\cap\Omega^\eps,\\
&\Psi^{\eps,K}_{\text{D}} = 0 \quad \text{ in }K\cap B^\eps,\\
&\text{if $E' \in \mathcal{E}^{\text{in}}_H \cap \partial K$}, \quad \int_{E'} \Psi^{\eps,K}_{\text{D}} = 0 \\
& \qquad \qquad \text{ and } \quad \left(\alpha\nabla \Psi^{\eps,K}_{\text{D}} - \frac{1}{2}\widehat{b}^\eps\Psi^{\eps,K}_{\text{D}}\right)\cdot n= \mu^{K,E'} \ \text{on $E'\cap\Omega^\eps$},\\
& \text{if $E' \in \mathcal{E}^{\text{ext}}_H \cap \partial K$}, \quad \Psi^{\eps,K}_{\text{D}}= 0 \quad\text{on $E'\cap\Omega^\eps$},
\end{aligned}\right.
\label{pb_adv_msfem_bubble_dirichlet}
\end{equation}
where $\lambda^{K,E'}$ and $\mu^{K,E'}$ are constant.
The well-posedness of~\eqref{pb_adv_msfem_basis_dirichlet} and~\eqref{pb_adv_msfem_bubble_dirichlet} is established in Appendix~\ref{section_definition_adv_msfem_basis_functions}, under the assumption that $\text{div } \widehat{b}^\eps \leq 0$. 

\medskip

In the case of the Dirichlet problem, we then have
\begin{equation}
\label{eq:spaces_dirichlet1}
V^{\text{adv}}_H = \dis \text{Span} \, \left\{ \Phi^{\eps,E}_{\text{D}}, \quad E \in \mathcal{E}^{\text{in}}_H \right\}
\end{equation}
and
\begin{equation}
\label{eq:spaces_dirichlet2}
V^{\text{adv bubble}}_H = \dis \text{Span} \, \left\{ \Phi^{\eps,E}_{\text{D}}, \ \Psi^{\eps,K}_{\text{D}}, \quad E \in \mathcal{E}^{\text{in}}_H, \ K \in \mathcal{T}_H \right\}.
\end{equation}

The method obtained is similar to the approach of~\cite{degond2015crouzeix}. The difference lies in the definition of the bubble functions since~\cite{degond2015crouzeix} use homogeneous Dirichlet conditions on the boundary of elements whereas we impose here Crouzeix-Raviart conditions. The work~\cite{degond2015crouzeix} shows the added value of bubble functions (which are, we emphasize it, defined using the full advection-diffusion operator, as we do here). It also explores numerically how non-periodicity of the location of the perforations affects the quality of the numerical approach, an issue which we ourselves also examine in the present article (see Section~\ref{sec:num-non-per}). 

\subsubsection{Neumann problem~\eqref{perf_neumann_pb_perforated}}

\paragraph*{Variational formulations.}

The variational formulations for the Neumann problem read as~\eqref{varf_adv_msfem_dirichlet_noB}, \eqref{varf_adv_msfem_dirichlet} and~\eqref{varf_msfemA_SUPG_CRB_dirichlet}, with $c_H$ replaced by $a_H$ both in the variational formulations and in the definitions~\eqref{adv_msfem_approximation_space_dirichlet_noB} and~\eqref{adv_msfem_approximation_space_dirichlet} of the discretization spaces, and taking into account the modification mentioned underneath~\eqref{eq:def_WH} of the functional spaces $W_H$, $W^0_H$ and $W^0_{H,\text{bubble}}$.

When no bubble functions are used to enrich the approximation space, the variational formulation thus reads as
\begin{equation}
\text{Find $u_H \in V^{\text{adv}}_H$ such that, for any $v_H \in V^{\text{adv}}_H$},\quad a_H(u_H,v_H)=F(v_H),
\label{varf_adv_msfem_neumann_noB}
\end{equation}
where, instead of~\eqref{adv_msfem_approximation_space_dirichlet_noB}, the approximation space $V^{\text{adv}}_H$ reads as
$$
V^{\text{adv}}_H=\left\{
u\in W_H \text{ such that $a_H(u,v)=0$ for any $v \in W^0_H$}
\right\}.
$$
When using bubble functions, we use the variational formulation 
\begin{equation}
\left\{
\begin{array}{c}
  \text{Find $u_H \in V^{\text{adv bubble}}_H$ such that},
  \\
  \text{for any $v_H \in V^{\text{adv bubble}}_H$}, \quad a_H(u_H,v_H)=F(v_H),
\end{array}
\right.
\label{varf_adv_msfem_neumann}
\end{equation}
where, instead of~\eqref{adv_msfem_approximation_space_dirichlet}, the approximation space $V^{\text{adv bubble}}_H$ reads as
\begin{equation}
V^{\text{adv bubble}}_H=\left\{
u\in W_H \text{ such that } a_H(u,v)=0 \text{ for all $v \in W^0_{H,\text{bubble}}$}
\right\}.
\label{adv_msfem_approximation_space_neumann}
\end{equation}

\medskip

The stabilized version of the formulation~\eqref{varf_adv_msfem_neumann} reads as
\begin{equation}
\left\{
\begin{array}{c}
\text{Find $u_H\in V^{\text{adv bubble}}_H$ such that, for any $v_H\in V^{\text{adv bubble}}_H$},
\\
a_H(u_H,v_H) +a_{\text{stab}}(u_H,v_H) = F(v_H) +F_{\text{stab}}(v_H),
\end{array}
\right.
\label{varf_msfemA_SUPG_CRB_neumann}
\end{equation}
with again $V^{\text{adv bubble}}_H$ defined by~\eqref{adv_msfem_approximation_space_neumann} and $a_{\text{stab}}$ given by~\eqref{term_stab_adv_msfem_real}. As in the Dirichlet case, the stabilization is void for the formulation~\eqref{varf_adv_msfem_neumann_noB}.

\begin{remark}
\label{rem:cH_ou_aH}
In principle, from a practical viewpoint, it is possible to work with the bilinear form $a_H$ instead of $c_H$ when considering the Dirichlet problem~\eqref{pb_perforation_num}. 
This variant is briefly examined in Section~\ref{section_boundary_condition_adv_msfem}, where it is shown that it poorly performs. 
\end{remark}

\paragraph*{Description of the basis functions.}

Similarly as in Section~\ref{section_msfemB_CRB_neumann}, we now make explicit a basis of functions for our approximation spaces. For any $E\in \mathcal{E}^{\text{in}}_H$, the basis function $\Phi^{\eps,E}_{\text{N}}$ is defined by 
\begin{equation}
\left\{\begin{aligned}
&-\alpha\Delta \Phi_{\text{N}}^{\eps,E}+\widehat{b}^\eps\cdot\nabla \Phi_{\text{N}}^{\eps,E} = 0 \quad\text{in }K\cap\Omega^\eps,\\
&\left(\alpha\nabla \Phi_{\text{N}}^{\eps,E}\right)\cdot n = 0 \quad \text{ in }K\cap \partial B^\eps,\\
&\text{if $E' \in \mathcal{E}^{\text{in}}_H \cap \partial K$}, \quad \int_{E'\cap\Omega^\eps} \Phi_{\text{N}}^{\eps,E} = \delta_{E,E'} \\
& \qquad \qquad \text{ and } \quad \left(\alpha\nabla \Phi_{\text{N}}^{\eps,E}\right)\cdot n= \lambda^{K,E'} \ \text{on $E'\cap\Omega^\eps$},\\
& \text{if $E' \in \mathcal{E}^{\text{ext}}_H \cap \partial K$}, \quad \Phi_{\text{N}}^{\eps,E} = 0 \quad \text{on $E'\cap\Omega^\eps$},
\end{aligned}\right.
\label{pb_adv_msfem_basis_neumann}
\end{equation}
while the bubble function $\Psi_{\text{N}}^{\eps,K}$ is the solution to
\begin{equation}
\left\{\begin{aligned}
&-\alpha\Delta \Psi_{\text{N}}^{\eps,K}+\widehat{b}^\eps\cdot\nabla \Psi_{\text{N}}^{\eps,K} = 1 \quad\text{in }K\cap\Omega^\eps,\\
&\left(\alpha\nabla \Psi_{\text{N}}^{\eps,K}\right)\cdot n = 0 \quad \text{ in }K\cap \partial B^\eps,\\
&\text{if $E' \in \mathcal{E}^{\text{in}}_H \cap \partial K$}, \quad \int_{E'\cap\Omega^\eps} \Psi_{\text{N}}^{\eps,K} = 0 \\
& \qquad \qquad \text{ and } \quad \left(\alpha\nabla \Psi_{\text{N}}^{\eps,K}\right)\cdot n= \mu^{K,E'}\quad\text{on $E'\cap\Omega^\eps$},\\
&\text{if $E' \in \mathcal{E}^{\text{ext}}_H \cap \partial K$}, \quad \Psi_{\text{N}}^{\eps,K}= 0 \quad\text{on $E'\cap\Omega^\eps$},
\end{aligned}\right.
\label{pb_adv_msfem_bubble_neumann}
\end{equation}
where $\lambda^{K,E'}$ and $\mu^{K,E'}$ are constant.

\medskip

In the case of the Neumann problem, we then have, instead of~\eqref{eq:spaces_dirichlet1} and~\eqref{eq:spaces_dirichlet2},
$$
V^{\text{adv}}_H = \text{Span} \, \left\{ \Phi^{\eps,E}_{\text{N}}, \quad E \in \mathcal{E}^{\text{in}}_H \right\}
$$
and
$$
V^{\text{adv bubble}}_H = \text{Span} \, \left\{ \Phi^{\eps,E}_{\text{N}}, \ \Psi^{\eps,K}_{\text{N}}, \quad E \in \mathcal{E}^{\text{in}}_H, \ K \in \mathcal{T}_H \right\}.
$$

\begin{remark}[A mixed approach and its stabilized version]
\label{section_estimate_msfem}

In the current section and in the previous one, we have considered basis functions that are all built using the same operator: either the full operator (in the current Section~\ref{section_adv_msfem_CRB}), or only the diffusion term (in Section~\ref{section_msfemB_CRB}), for both $\Phi^{\eps,E}$ and $\Psi^{\eps,K}$. The question arises to build separately functions $\Phi^{\eps,E}$ associated to edges and bubble functions $\Psi^{\eps,K}$ associated to elements using two different operators for each category. We do not detail here the construction of such a mixed approach, which is an easy adaptation of the constructions described above.

For the sake of completeness, we have considered such mixed approaches in some of the numerical tests reported on in Section~\ref{section_numerical_results}. As will be seen there, we have not found any cases where such a mixed approach was the one providing the best results.
\end{remark}

\section{Elements of theoretical analysis}
\label{section_error_estimate}

In this section~\ref{section_error_estimate}, we consider periodic perforations. More precisely, let $Y=(0,1)^d$ be the unit square and ${\cal O} \subset Y$ be some smooth perforation (by simplicity, we denote ${\cal O}$ {\em a perforation}, although ${\cal O}$ may be the union of several disconnected sets). We scale ${\cal O}$ and $Y$ by a factor $\varepsilon$ and then periodically repeat this pattern with periods $\varepsilon$ in all directions. The set of perforations is therefore
\begin{equation}
\label{eq:periodic_perfo}
B_\eps = \Omega \cap \left( \cup_{k \in \Z^d} \ \eps B_k \right)
\quad \text{with} \quad 
B_k = k + {\cal O}
\end{equation}
and the perforated domain is $\Omega_\eps = \Omega \setminus \overline{B_\eps}$. We also introduce
\begin{equation}
\label{eq:def_P}
{\cal P} = \cup_{k \in \Z^d} \ \left( k + Y \setminus \overline{{\cal O}} \right).
\end{equation}

\subsection{Homogenization results}
\label{sec:homog_results}

For self-consistency and for the convenience of the reader, we include here some results of homogenization for the problems~\eqref{pb_perforation_num} and~\eqref{perf_neumann_pb_perforated} considered. These results are useful to bear in mind the different scalings involved, and the asymptotic behavior of the solutions $u^\eps$ we approximate in the various cases. The proofs of these results are essentially contained in the literature, although some details may vary (for some specific cases we had to consider, we were not able to explicitly find the relevant theoretical results in the literature with all the generality we were after). For convenience, we include the proof of these results in the Appendix~\ref{section_homogenization_results} of this article. In any event, we do not claim any originality for these results. 

There is indeed a considerable body of literature for the homogenization of diffusion and advection-diffusion problems set on perforated domains. The behavior obtained in the homogenization limit drastically depend on the boundary conditions set on the boundaries of the perforations and on the density and size of these perforations. For the diffusion problem itself, we wish to cite~\cite{allaire1993homogenization,capatina2012homogenization, cioranescu2006periodic,cioranescu1997strange, dalmaso2004asymptotic,lions1980asymptotic,papanicolaou1980diffusion}, and more specifically~\cite{cioranescu2008periodic,cioranescu1997strange,dalmaso2004asymptotic, lebris2014msfem,lions1980asymptotic,papanicolaou1980diffusion} for the case of Dirichlet boundary conditions. The advection-diffusion equation is studied in~\cite{allaire2010homogenization,allaire2006homogeneisation, amaziane2007homogenization,hornung1991diffusion,rubinstein1986dispersion}. We also mention, for completeness, some of the many studies of the (Navier) Stokes equation in this setting, such as~\cite{allaire1991homogenization,allaire1992homogenizationunsteady, mikelic1991homogenization}. A general reference on such topics is the textbook~\cite{hornung1997homogenization}.

\medskip

In the case of homogeneous Dirichlet boundary conditions, that is problem~\eqref{pb_perforation_num}, two different results, depending on the choice of the advection field $\widehat{b}^\eps$, may be established, using standard arguments of the literature. Both results have proofs readily adapted from the already classical proofs of the same estimates for the pure diffusion operator (dating back to~\cite{lions1980asymptotic} 
and slightly extended in~\cite[Appendix A.2]{lebris2014msfem}). 

As we briefly mentioned in the introduction, the only interesting case is when $\dis \widehat{b}^\eps=\frac{1}{\eps}b\left(\frac{\cdot}{\varepsilon}\right)$. Then, Theorem~\ref{theorem_dirichlet_perf} below holds. Its proof is postponed until Appendix~\ref{section_homogeneous_dirichlet_bc}. The advection field $b$ does affect the homogenized behavior, since the cell problem~\eqref{pb_corrector_perforated} defined below depends on $b$.

\begin{theorem}[adapted from~\cite{lions1980asymptotic,lebris2014msfem}]
\label{theorem_dirichlet_perf}

We assume~\eqref{eq:periodic_perfo} and that $\overline{\mathcal{O}} \subset Y$. We also assume that the right-hand side $f$ belongs to $L^\infty(\Omega) \cap H^2(\Omega)$. Let $\dis \widehat{b}^\eps=\frac{1}{\eps}b\left(\frac{\cdot}{\varepsilon}\right)$ where $b$ belongs to $(W^{1,p}(Y \setminus \overline{\mathcal{O}}))^d$ for some $p>d$, is $Y$-periodic and is such that $\text{div} \ b\leq 0$ in $Y \setminus \overline{\mathcal{O}}$.

Then Problem~\eqref{pb_perforation_num} is well-posed and its solution $u^\eps$ satisfies 
\begin{equation}
\left| u^\eps - \eps^2 w\left(\frac{\cdot}{\varepsilon}\right) f \right|_{H^1(\Omega^\eps)} \leq C \eps^{3/2}\mathcal{N}(f),
\label{estimation_u}
\end{equation} 
for some $C$ independent of $\eps$ and $f$, where $w$ is the solution (actually in $C^1(\overline{Y \setminus \mathcal{O}}))$ of the cell problem
\begin{equation}
\left\{\begin{aligned}
&-\alpha \Delta w + b \cdot \nabla w= 1 \quad \text{in $\cal P$},\\
&\text{$w$ is $Y$-periodic}, \quad \text{$w=0$ on $\partial\mathcal{O}$},
\end{aligned}\right.
\label{pb_corrector_perforated}
\end{equation}
where we recall that ${\cal P}$ is defined by~\eqref{eq:def_P}.
In the above bound, we have denoted by $\left| v \right|_{H^1(\Omega^\eps)} = \left\| \nabla v \right\|_{H^1(\Omega^\eps)}$ and
\begin{equation}
\mathcal{N}(f)=\|f\|_{L^\infty(\Omega)}+\|\nabla f\|_{L^2(\Omega)}+\|\Delta f\|_{L^2(\Omega)}.
\label{def_Nf}
\end{equation} 
\end{theorem}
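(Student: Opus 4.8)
The plan is to follow the classical two-scale corrector strategy (as in~\cite{lions1980asymptotic} and~\cite[Appendix A.2]{lebris2014msfem}), carefully tracking the extra terms produced by the advection field and exploiting the fact that, for functions vanishing on $\partial B^\eps$, the Poincar\'e inequality on $\Omega^\eps$ holds with a constant of order $\eps$. For well-posedness, writing $a(u,v)=\int_{\Omega^\eps}\alpha\nabla u\cdot\nabla v+(\widehat{b}^\eps\cdot\nabla u)v$ and using that $u$ vanishes on $\partial\Omega^\eps$, an integration by parts gives $\int_{\Omega^\eps}(\widehat{b}^\eps\cdot\nabla u)\,u=-\tfrac12\int_{\Omega^\eps}(\text{div }\widehat{b}^\eps)\,u^2\geq 0$, since $\text{div }\widehat{b}^\eps=\eps^{-2}(\text{div }b)(\cdot/\eps)\leq 0$; hence $a(u,u)\geq\alpha\,\|\nabla u\|^2_{L^2(\Omega^\eps)}$ and Lax--Milgram applies. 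I would also record here the two tools I need downstream: the scaled Poincar\'e inequality $\|v\|_{L^2(\Omega^\eps)}\leq C\eps\,\|\nabla v\|_{L^2(\Omega^\eps)}$ for every $v$ vanishing on $\partial B^\eps$ (a consequence of the cell Poincar\'e inequality on $Y\setminus\overline{\mathcal O}$ for functions vanishing on $\partial\mathcal O$, plus a scaling argument, and using $\overline{\mathcal O}\subset Y$); and the regularity of the cell function: since $b\in(W^{1,p})^d$ with $p>d$ one has $b\in L^\infty$, so elliptic regularity together with $W^{2,p}\hookrightarrow C^1$ give $w\in C^1(\overline{Y\setminus\mathcal O})$ with $w,\nabla w\in L^\infty$.

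Next I construct the corrector and compute its residual. Setting $W^\eps(x)=\eps^2 w(x/\eps)$, a direct computation using the cell equation~\eqref{pb_corrector_perforated} yields $-\alpha\Delta W^\eps+\widehat{b}^\eps\cdot\nabla W^\eps=1$ in $\Omega^\eps$ and $W^\eps=0$ on $\partial B^\eps$. Taking the ansatz $u_1^\eps=W^\eps f$ and expanding the operator on the product, I obtain
$$-\alpha\Delta u_1^\eps+\widehat{b}^\eps\cdot\nabla u_1^\eps=f+r^\eps,\qquad r^\eps=-2\alpha\eps\,(\nabla w)(\cdot/\eps)\cdot\nabla f-\alpha\eps^2 w(\cdot/\eps)\Delta f+\eps\,w(\cdot/\eps)\,b(\cdot/\eps)\cdot\nabla f,$$
so that every term of $r^\eps$ carries at least one power of $\eps$ and $\|r^\eps\|_{L^2(\Omega^\eps)}\leq C\eps\,\mathcal N(f)$. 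The difficulty is that $u_1^\eps$ does \emph{not} vanish on the outer boundary $\partial\Omega$ (only $w=0$ on $\partial\mathcal O$ is used), which blocks a direct energy estimate and is the true origin of the $\eps^{3/2}$ (rather than $\eps^2$) rate.

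To remedy this I introduce a smooth cut-off $m_\eps$ equal to $1$ outside the boundary layer $\Sigma_\eps=\{x\in\Omega:\ \text{dist}(x,\partial\Omega)<\eps\}$, vanishing on $\partial\Omega$, with $|\nabla m_\eps|\leq C/\eps$ and $|\Delta m_\eps|\leq C/\eps^2$, and set $S^\eps=u^\eps-m_\eps u_1^\eps$. Since $m_\eps u_1^\eps$ vanishes on all of $\partial\Omega^\eps$, we have $S^\eps\in H^1_0(\Omega^\eps)$. Expanding $-\alpha\Delta(m_\eps u_1^\eps)+\widehat{b}^\eps\cdot\nabla(m_\eps u_1^\eps)$ and using the identity above, one gets $-\alpha\Delta S^\eps+\widehat{b}^\eps\cdot\nabla S^\eps=g^\eps$ in $\Omega^\eps$, where $g^\eps$ collects $(1-m_\eps)f$, the term $m_\eps r^\eps$, and commutator terms proportional to $\nabla m_\eps$ and $\Delta m_\eps$. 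Apart from $m_\eps r^\eps$, every term of $g^\eps$ is supported in $\Sigma_\eps$, and the powers of $\eps$ in $u_1^\eps=O(\eps^2)$ and $\nabla u_1^\eps=O(\eps)$ exactly offset the negative powers coming from $\nabla m_\eps$ and $\Delta m_\eps$, so that each such term is of order $1$ pointwise on $\Sigma_\eps$; since $|\Sigma_\eps|\leq C\eps$, this gives $\|g^\eps\|_{L^2(\Omega^\eps)}\leq C\eps^{1/2}\,\mathcal N(f)$.

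Finally I run the energy estimate. Testing the equation for $S^\eps$ against $S^\eps\in H^1_0(\Omega^\eps)$ and using the coercivity computed above, $\alpha\,\|\nabla S^\eps\|^2_{L^2(\Omega^\eps)}\leq\int_{\Omega^\eps}g^\eps S^\eps\leq\|g^\eps\|_{L^2(\Omega^\eps)}\,\|S^\eps\|_{L^2(\Omega^\eps)}$; the crucial point is that applying the scaled Poincar\'e inequality to $S^\eps$ gives $\|S^\eps\|_{L^2(\Omega^\eps)}\leq C\eps\,\|\nabla S^\eps\|_{L^2(\Omega^\eps)}$, whence $\|\nabla S^\eps\|_{L^2(\Omega^\eps)}\leq C\eps\,\|g^\eps\|_{L^2(\Omega^\eps)}\leq C\eps^{3/2}\,\mathcal N(f)$. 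It then remains to remove the cut-off: writing $u^\eps-u_1^\eps=S^\eps-(1-m_\eps)u_1^\eps$ and noting that $(1-m_\eps)u_1^\eps$ is supported in $\Sigma_\eps$ with $\nabla[(1-m_\eps)u_1^\eps]=O(\eps)$ there (so its $H^1$ seminorm is $\leq C\eps^{1/2}\cdot O(\eps)=C\eps^{3/2}\mathcal N(f)$), the triangle inequality yields~\eqref{estimation_u}. The main obstacle throughout is the boundary layer at $\partial\Omega$: it forces the $\eps^{1/2}$ loss, and the $\eps^{3/2}$ rate is recovered only because the scaled Poincar\'e inequality supplies the extra factor $\eps$ that turns $\|g^\eps\|_{L^2}=O(\eps^{1/2})$ into the claimed bound. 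The two delicate points to get right are thus localizing all error contributions to a set of measure $O(\eps)$, and arranging that they are paired with the undifferentiated $S^\eps$ rather than with $\nabla S^\eps$.
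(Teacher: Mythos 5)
Your proposal is correct and follows essentially the same route as the paper's proof: the corrector ansatz $\eps^2 w(\cdot/\eps)f$, a cut-off of width $\eps$ near $\partial\Omega$ (your $m_\eps$ is the paper's $\eta^\eps$, with the same scaling of its derivatives), an $O(\sqrt{\eps})$ bound in $L^2(\Omega^\eps)$ on the residual, the energy estimate combined with the $\eps$-scaled Poincar\'e inequality to gain the extra factor $\eps$, and a triangle inequality to remove the cut-off at cost $C\eps^{3/2}\mathcal{N}(f)$. The only (harmless) imprecision is the claim that all commutator terms are pointwise $O(1)$ on the layer: the piece $\eps^2 w_{|\eps}\,\nabla f\cdot\nabla m_\eps$ is not pointwise bounded since $\nabla f$ is only in $L^2$, but it is directly controlled by $\eps^2\|w\|_{L^\infty}\|\nabla m_\eps\|_{L^\infty}\|\nabla f\|_{L^2}\leq C\eps\|\nabla f\|_{L^2}$, exactly as the paper mixes $L^\infty$ and $L^2$ bounds on $f$, $\nabla f$ and on the derivatives of $\eta^\eps$.
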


We recall that the assumption
$$
\text{$b \in (W^{1,p}(Y \setminus \overline{\mathcal{O}}))^d$ and $b$ is $Y$-periodic}
$$
means (here and throughout the article) that
$$
b \in \left\{ b \in W^{1,p}_{\rm loc}({\cal P}), \ \ \text{$b$ is $Y$-periodic} \right\},
$$
where ${\cal P}$ is defined by~\eqref{eq:def_P}.

\begin{remark}
We have not looked for the lowest possible regularity of $b$ ensuring that our theorem above holds. We have chosen to work with $b \in (W^{1,p}(Y \setminus \overline{\mathcal{O}}))^d$ with $p>d$ for the sake of simplicity.
\end{remark}

On the other hand, when $\dis \widehat{b}^\eps=b\left(\frac{\cdot}{\varepsilon}\right)$, Theorem~\ref{theorem-sans-adv} below describes the asymptotic behavior of the solution to~\eqref{pb_perforation_num}, which does not depend at the dominant order upon the advection field $b$. 

\begin{theorem}[adapted from~\cite{lions1980asymptotic,lebris2014msfem}]
\label{theorem-sans-adv}
Under the same assumptions as those of Theorem~\ref{theorem_dirichlet_perf}, except that here $\dis \widehat{b}^\eps=b\left(\frac{\cdot}{\varepsilon}\right)$, 
estimate~\eqref{estimation_u} holds, where $w$ is now defined as the solution to the cell problem
$$
\left\{\begin{aligned}
&-\alpha\Delta w = 1 \quad \text{in ${\cal P}$}, \\
&\text{$w$ is $Y$-periodic}, \quad \text{$w=0$ on $\partial\mathcal{O}$},
\end{aligned}\right.
$$
instead of~\eqref{pb_corrector_perforated}.
\end{theorem}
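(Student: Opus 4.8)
The plan is to follow, almost verbatim, the structure of the proof of Theorem~\ref{theorem_dirichlet_perf}, the only—but essential—difference being the order at which the advection term enters the two-scale expansion. Writing $w$ for the solution of the pure-diffusion cell problem appearing in the statement and setting $w^\eps(x)=w(x/\eps)$, I would take as leading-order approximation
\[
u^\eps_{\rm app}(x) = \eps^2 \, w^\eps(x)\, f(x).
\]
Since $w$ is $Y$-periodic and vanishes on $\partial\mathcal{O}$, the function $u^\eps_{\rm app}$ vanishes on the boundary of every interior perforation (so that, once corrected near $\partial\Omega$, it is an admissible competitor for~\eqref{pb_perforation_num}), but it does not vanish on $\partial\Omega$. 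The $C^1(\overline{Y\setminus\mathcal{O}})$ regularity of $w$, which holds by elliptic regularity for the smooth, constant–right-hand-side cell problem, will be used throughout to bound $w^\eps$ and $(\nabla w)(\cdot/\eps)$ in $L^\infty$.

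First I would compute the residual of $u^\eps_{\rm app}$ under the full operator. A direct calculation, using $-\alpha\Delta w=1$ in $\mathcal{P}$ to cancel the leading term against $f$, gives
\[
-\alpha\Delta u^\eps_{\rm app} + \widehat{b}^\eps\cdot\nabla u^\eps_{\rm app} = f + \eps\,R^\eps,
\]
where $R^\eps$ gathers the lower-order terms, among which the advection contributes precisely $f\,b(\cdot/\eps)\cdot(\nabla w)(\cdot/\eps)$ together with higher powers of $\eps$. Using the boundedness of $w$, $\nabla w$ and $b$ and the regularity of $f$, one checks that $\|R^\eps\|_{L^2(\Omega^\eps)}\le C\,\mathcal{N}(f)$, with $\mathcal{N}(f)$ as in~\eqref{def_Nf}. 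The decisive observation—and the whole reason why the cell problem here loses the advection term—is that, under the present scaling $\widehat{b}^\eps=b(\cdot/\eps)$, the advection only produces an $O(\eps)$ contribution to the residual; under the scaling $\widehat{b}^\eps=\eps^{-1}b(\cdot/\eps)$ of Theorem~\ref{theorem_dirichlet_perf} the same term would be $O(1)$ and would have to be absorbed into the cell problem, which is exactly why~\eqref{pb_corrector_perforated} carries $b\cdot\nabla w$.

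Next I would handle the outer boundary by a cutoff $\theta^\eps$, smooth, valued in $[0,1]$, equal to $1$ away from an $\eps$-neighborhood of $\partial\Omega$, vanishing on $\partial\Omega$, with $|\nabla\theta^\eps|\le C\eps^{-1}$ and $|\Delta\theta^\eps|\le C\eps^{-2}$. Setting $r^\eps=u^\eps-\theta^\eps u^\eps_{\rm app}\in H^1_0(\Omega^\eps)$, this function solves $-\alpha\Delta r^\eps+\widehat{b}^\eps\cdot\nabla r^\eps=g^\eps$, where $g^\eps$ consists of the bulk term $-\theta^\eps\eps R^\eps$, of order $\eps$ in $L^2$, and of several boundary-layer terms (coming from $(1-\theta^\eps)f$, $\nabla\theta^\eps\cdot\nabla u^\eps_{\rm app}$, $u^\eps_{\rm app}\Delta\theta^\eps$ and $u^\eps_{\rm app}\,\widehat{b}^\eps\cdot\nabla\theta^\eps$), each supported on a set of measure $O(\eps)$ and pointwise at worst $O(1)$, hence of order $\eps^{1/2}$ in $L^2$; thus $\|g^\eps\|_{L^2(\Omega^\eps)}\le C\eps^{1/2}\mathcal{N}(f)$. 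Testing against $r^\eps$ and invoking the coercivity~\eqref{eq:montreal} (legitimate since $\operatorname{div}\widehat{b}^\eps=\eps^{-1}(\operatorname{div}b)(\cdot/\eps)\le0$) together with the Poincaré inequality on the Dirichlet-perforated domain, $\|v\|_{L^2(\Omega^\eps)}\le C\eps\|\nabla v\|_{L^2(\Omega^\eps)}$ for $v\in H^1_0(\Omega^\eps)$ (classical, and the very reason $u^\eps$ is of size $\eps^2$), gives $\|\nabla r^\eps\|_{L^2(\Omega^\eps)}\le C\eps\,\alpha^{-1}\|g^\eps\|_{L^2(\Omega^\eps)}\le C\eps^{3/2}\mathcal{N}(f)$. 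Since $u^\eps-\eps^2 w^\eps f=r^\eps+(1-\theta^\eps)u^\eps_{\rm app}$ and the gradient of the last (purely boundary-layer) term is again $O(\eps^{3/2})$ in $L^2$, estimate~\eqref{estimation_u} follows.

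The hard part will be the boundary-layer bookkeeping that yields the sharp exponent $3/2$ rather than the $2$ one might naively expect from the $\eps^2$ smallness of the solution: it requires carefully balancing the $\eps^{-1}$ and $\eps^{-2}$ blow-up of the derivatives of $\theta^\eps$ against the $O(\eps)$ measure of the layer and the $O(\eps^2)$ size of $u^\eps_{\rm app}$, and then combining this with the $\eps$-dependent Poincaré constant. By contrast, the advection poses no real difficulty here: under the scaling $\widehat{b}^\eps=b(\cdot/\eps)$ it remains bounded and enters both the residual and the energy identity only as a lower-order, sign-controlled perturbation—which is precisely the structural difference between this statement and Theorem~\ref{theorem_dirichlet_perf}.
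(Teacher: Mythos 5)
Your proposal is correct and follows essentially the same route as the paper, which omits this proof precisely because it runs along the same lines as that of Theorem~\ref{theorem_dirichlet_perf} in Appendix~\ref{section_homogeneous_dirichlet_bc}: the cutoff ansatz $\eps^2 w(\cdot/\eps) f\,\eta^\eps$, the residual computation (where, under the scaling $\widehat{b}^\eps=b(\cdot/\eps)$, the advection indeed contributes only at order $\eps$, which is exactly why the cell problem loses the drift), the energy estimate using $\operatorname{div}\widehat{b}^\eps\leq 0$, the $\eps$-scaled Poincar\'e inequality~\eqref{ineq_13}, and the $O(\eps^{3/2})$ boundary-layer bookkeeping all match the paper's argument. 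The only cosmetic differences (cutting off after computing the residual rather than before, a harmless sign slip in the final decomposition $u^\eps-\eps^2 w^\eps f=r^\eps-(1-\theta^\eps)u^\eps_{\rm app}$, and citing the broken-form coercivity~\eqref{eq:montreal} where the continuous analogue is meant) do not affect the proof.
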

We skip the proof of Theorem~\ref{theorem-sans-adv}, that follows the same lines as the proof of Theorem~\ref{theorem_dirichlet_perf}.

\medskip

For Neumann boundary conditions, the situation is different, as briefly mentioned in the introduction. No rescaling of the solution, which stays of order one, is necessary, and no enhancement of the advection field by a factor $\eps^{-1}$ is required for the advection to affect the homogenized limit. In the case of the diffusion problem, the problem was first solved in the case of \emph{isolated} (meaning that $\overline{\mathcal{O}}\subset Y$) holes in~\cite{cioranescu1979homogenization}. The generalization to nonisolated holes was addressed in~\cite{acerbi1992extension,allaire1993homogenization,cioranescu2006periodic}. The homogenization limit for the advection-diffusion equation~\eqref{perf_neumann_pb_perforated}, in the periodic case, is the purpose of the following theorems. In Theorem~\ref{perf_neumann_theorem_b_osc_div0}, we consider the case when
\begin{equation}
  \label{eq:structure}
  \text{div }\,b\leq 0 \ \ \text{in} \ \ Y \setminus \overline{\mathcal{O}} \qquad \text{and} \qquad b\cdot n \geq 0 \ \ \text{on} \ \ \partial\mathcal{O}.
\end{equation}
The proof of that theorem, which is postponed until Appendix~\ref{section_homogeneous_neumann_bc_div0}, is an easy adaptation of the proof of~\cite[Theorem 2.9]{allaire1992homogenization}, that uses two-scale convergence and addresses the case without advection on a periodically perforated domain.

\begin{remark}
Since $b$ is periodic, the assumption~\eqref{eq:structure} is equivalent to the assumption
\begin{equation}
\label{eq:msp}
\text{div $b$ = 0 in $Y \setminus \overline{\mathcal{O}}$ and $b\cdot n = 0$ on $\partial\mathcal{O}$}.
\end{equation}
Indeed, we check that
$$
\int_{Y \setminus \overline{\mathcal{O}}} \text{div }\,b = \int_{\partial Y} b \cdot n + \int_{\partial \mathcal{O}} b \cdot n = \int_{\partial \mathcal{O}} b \cdot n,
$$
the last equality being a consequence of the periodicity of $b$.    
\end{remark}

\begin{theorem}[adapted from Theorem 2.9 of~\cite{allaire1992homogenization}]
  \label{perf_neumann_theorem_b_osc_div0}

We assume~\eqref{eq:periodic_perfo}, that $\overline{\mathcal{O}} \subset Y$ and that $Y \setminus \overline{\mathcal{O}}$ is a connected open set of $\R^d$. 
We also assume that, uniformly in $\eps$, we have
\begin{equation}
\label{eq:geo_perfo}
H^1(\Omega^\eps) \hookrightarrow H^{1/2}(\partial \Omega^\eps),
\end{equation}
i.e. there exists some $C$ independent of $\eps$ such that
$$
\forall v \in H^1(\Omega^\eps), \qquad \| v \|_{H^{1/2}(\partial \Omega^\eps)} \leq C \| v \|_{H^1(\Omega^\eps)}.
$$

Let $\dis \widehat{b}^\eps=b\left(\frac{\cdot}{\varepsilon}\right)$ where $b$ belongs to $(W^{1,p}(Y \setminus \overline{\mathcal{O}}))^d$ for some $p>d$, is $Y$-periodic and satisfies~\eqref{eq:structure} (i.e.~\eqref{eq:msp}). We assume that $f \in L^2(\Omega)$.

Then Problem~\eqref{perf_neumann_pb_perforated} is well-posed and its solution $u^\eps$ satisfies
$$
\lim_{\eps \to 0} \left\| u^\eps - u^\star -\eps \sum_{i=1}^d w_i\left(\frac{\cdot}{\eps}\right) \partial_{x_i} u^\star \right\|_{H^1(\Omega^\eps)} = 0,
$$
where 
$u^\star$ is the solution to the problem
\begin{equation}
\label{eq:pb_homog_b}
\left\{\begin{aligned}
-\textnormal{div }(A^\star\nabla u^\star)+ b^\star\cdot\nabla u^\star &= \frac{|Y \setminus\overline{\mathcal{O}}|}{|Y|} \, f \quad\text{in }\Omega, \\
u^\star &= 0 \quad\text{on }\partial\Omega,
\end{aligned}\right.
\end{equation}
where the matrix $A^\star$ and the vector $b^\star$ are constant and given, for $1\leq i\leq d$, by
\begin{align}
\label{eq:def_astar_b}
A^\star\,e_i &= \frac{1}{|Y|}\int_{Y \setminus\overline{\mathcal{O}}}\alpha\,\big(e_i+\nabla w_i\big),
\\
\label{eq:def_bstar_b}
b^\star \cdot e_i&= \frac{1}{|Y|}\int_{Y \setminus\overline{\mathcal{O}}} b \cdot (e_i+\nabla w_i),
\end{align}
and where $w_i$ is the solution to the cell problem
\begin{equation}
\left\{\begin{aligned}
&-\Delta w_i=0 \quad\text{in ${\cal P}$},
\\
&\text{$w_i$ is $Y$-periodic}, \quad \text{$(\nabla w_i + e_i) \cdot n=0$ on $\partial\mathcal{O}$}.
\end{aligned}\right.
\label{perf_neumann_pb_corrector_b_per_oscill}
\end{equation}
\end{theorem}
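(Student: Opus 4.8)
The plan is to adapt the two-scale convergence argument of~\cite{allaire1992homogenization} (which treats the pure diffusion case) by carrying along the advection term, whose contribution is controlled precisely by the structural assumption~\eqref{eq:structure}, equivalently~\eqref{eq:msp}. I would organize the argument in four steps: a priori estimates and extension, two-scale compactness, identification of the cell and homogenized problems, and finally the strong corrector convergence.

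\emph{A priori estimates and well-posedness.} I would first write the weak formulation of~\eqref{perf_neumann_pb_perforated} on $V^\eps$ and test with $v=u^\eps$. Integrating the advection term by parts gives
\[
\int_{\Omega^\eps} \left(b(\cdot/\eps)\cdot\nabla u^\eps\right) u^\eps = -\frac{1}{2}\int_{\Omega^\eps} \left(\text{div } b(\cdot/\eps)\right) (u^\eps)^2 + \frac{1}{2}\int_{\partial\Omega^\eps} \left(b(\cdot/\eps)\cdot n\right)(u^\eps)^2 = 0,
\]
where the bulk term vanishes because $\text{div } b = 0$ in $Y\setminus\overline{\mathcal O}$, and the boundary term vanishes since $b\cdot n=0$ on $\partial\mathcal O$ (hence $b(\cdot/\eps)\cdot n = 0$ on $\partial B^\eps \cap \overline{\Omega^\eps}$) while $u^\eps=0$ on $\partial\Omega^\eps\cap\partial\Omega$. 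This yields coercivity $a(u^\eps,u^\eps)=\alpha\,|u^\eps|^2_{H^1(\Omega^\eps)}$, whence well-posedness by Lax--Milgram and, together with a Poincaré inequality uniform in $\eps$ (available because of the Dirichlet condition on $\partial\Omega$ and the connectedness of $Y\setminus\overline{\mathcal O}$), a bound $\|u^\eps\|_{H^1(\Omega^\eps)}\le C\|f\|_{L^2(\Omega)}$ independent of $\eps$. Since $\overline{\mathcal O}\subset Y$ and $Y\setminus\overline{\mathcal O}$ is connected, a uniformly bounded extension operator $P^\eps:H^1(\Omega^\eps)\to H^1(\Omega)$ exists, so that $P^\eps u^\eps$ is bounded in $H^1(\Omega)$.

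\emph{Compactness and passage to the limit.} By the standard two-scale compactness theorem, up to a subsequence $u^\eps\rightharpoonup u^\star$ weakly in $H^1$, $u^\eps$ two-scale converges to $u^\star(x)$, and $\nabla u^\eps$ two-scale converges to $\nabla u^\star(x)+\nabla_y u_1(x,y)$ for some $u_1\in L^2\big(\Omega;H^1_\#(Y\setminus\overline{\mathcal O})\big)$ ($Y$-periodic in $y$). Inserting oscillating test functions $v^\eps(x)=\phi(x)+\eps\,\phi_1(x)\,\psi(x/\eps)$ with $\phi\in C_c^\infty(\Omega)$ and $\psi$ smooth and $Y$-periodic, I would pass to the two-scale limit in each term. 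Since $b\in (W^{1,p}(Y\setminus\overline{\mathcal O}))^d$ with $p>d$ is continuous (and bounded) by Morrey's embedding, $b(\cdot/\eps)$ is an admissible oscillating coefficient and the advection term converges to $\int_\Omega\int_{Y\setminus\overline{\mathcal O}} b(y)\cdot\big(\nabla u^\star+\nabla_y u_1\big)\,\phi(x)$, while the diffusion term gives the usual two-scale limit.

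\emph{Cell and homogenized problems.} Taking $\phi=0$ in the limit variational identity isolates the microscopic equation; the advection term drops out because it multiplies $\phi$, so $u_1$ solves the \emph{pure diffusion} cell problem, giving $u_1=\sum_i\partial_{x_i}u^\star\,w_i$ with $w_i$ solving~\eqref{perf_neumann_pb_corrector_b_per_oscill}. Taking $\phi_1=0$ yields the macroscopic equation: the diffusion part produces $A^\star$ as in~\eqref{eq:def_astar_b}, and the microscopic advection contribution $\int_{Y\setminus\overline{\mathcal O}} b\cdot\nabla_y u_1$ vanishes by the same integration by parts (using $\text{div}_y b=0$, $b\cdot n=0$ on $\partial\mathcal O$, and $Y$-periodicity of $w_i$), so the effective advection reduces to the average of $b$, consistent with~\eqref{eq:def_bstar_b}. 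The resulting homogenized problem~\eqref{eq:pb_homog_b} has a constant (hence divergence-free) field $b^\star$ and homogeneous Dirichlet data, so it is coercive and uniquely solvable; uniqueness of $u^\star$ upgrades the subsequence convergence to convergence of the whole family.

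\emph{Main obstacle: the strong corrector estimate.} The hardest part is upgrading weak two-scale convergence to the strong $H^1$ convergence with the explicit corrector stated in the theorem. I would handle this by an energy argument: expand $\alpha\,\big\|\nabla\big(u^\eps-u^\star-\eps\sum_i w_i(\cdot/\eps)\partial_{x_i}u^\star\big)\big\|^2_{L^2(\Omega^\eps)}$ and show that every cross term converges, using the equations satisfied by $u^\star$, $w_i$ and $u^\eps$ together with the identified two-scale limits, so that the energy of $u^\eps$ converges to that of the limit. Here the divergence-free/no-flux structure is again decisive, since it guarantees that the advection term contributes nothing to the energy balance and the estimate closes exactly as in the diffusion case. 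Two technical points require care: the perforated geometry, where the extension operator $P^\eps$ and the uniform trace embedding~\eqref{eq:geo_perfo} are used to control the boundary integrals on $\partial B^\eps$ uniformly in $\eps$; and the limited regularity of the correctors $w_i$, which may necessitate a density or truncation argument to justify using $w_i(\cdot/\eps)$ as a test profile.
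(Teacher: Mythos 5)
Your proposal is correct and follows essentially the same route as the paper's proof: coercivity of $a^\eps$ from the structure condition~\eqref{eq:structure} together with the uniform Poincar\'e inequality (proved in the paper via an extension argument using~\eqref{eq:geo_perfo}), two-scale compactness with oscillating test functions to identify the pure-diffusion cell problem and the homogenized coefficients (including the vanishing of $\int_{Y\setminus\overline{\mathcal{O}}} b\cdot\nabla w_i$ under~\eqref{eq:msp}), and an energy argument for the strong corrector convergence, for which the paper likewise expands $a^\eps(u^\eps-g^{\eps,1},u^\eps-g^{\eps,1})$, handles the boundary mismatch of $u^{\eps,1}$ on $\partial\Omega$ by a cutoff $\eta^\eps$, and invokes $w_i\in W^{1,\infty}$ and $u^\star\in H^2(\Omega)$ — exactly the technical points you flag at the end. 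The only cosmetic differences are that the paper uses extension by zero of $u^\eps$ and $\nabla u^\eps$ for the two-scale compactness (reserving the $H^1$ extension for the Poincar\'e inequality), and that in its limit computation the advection contributions to $a^\eps(u^{\eps,1},u^{\eps,1})$ and $a^\eps(u^{\eps,1},u^\eps)$ cancel rather than being absent from the outset.
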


As the perforations are smooth and $\overline{\cal O} \subset Y$, there exists a continuous embedding from $H^1(Y \setminus \overline{\cal O})$ to $H^{1/2}(\partial {\cal O})$.
The assumption~\eqref{eq:geo_perfo} thus amounts to a geometrical assumption on the perforations that intersect the boundary of $\Omega$ (see Figure~\ref{fig:oui}). 

\begin{figure}[htbp]
\centerline{
  \includegraphics[width=4truecm]{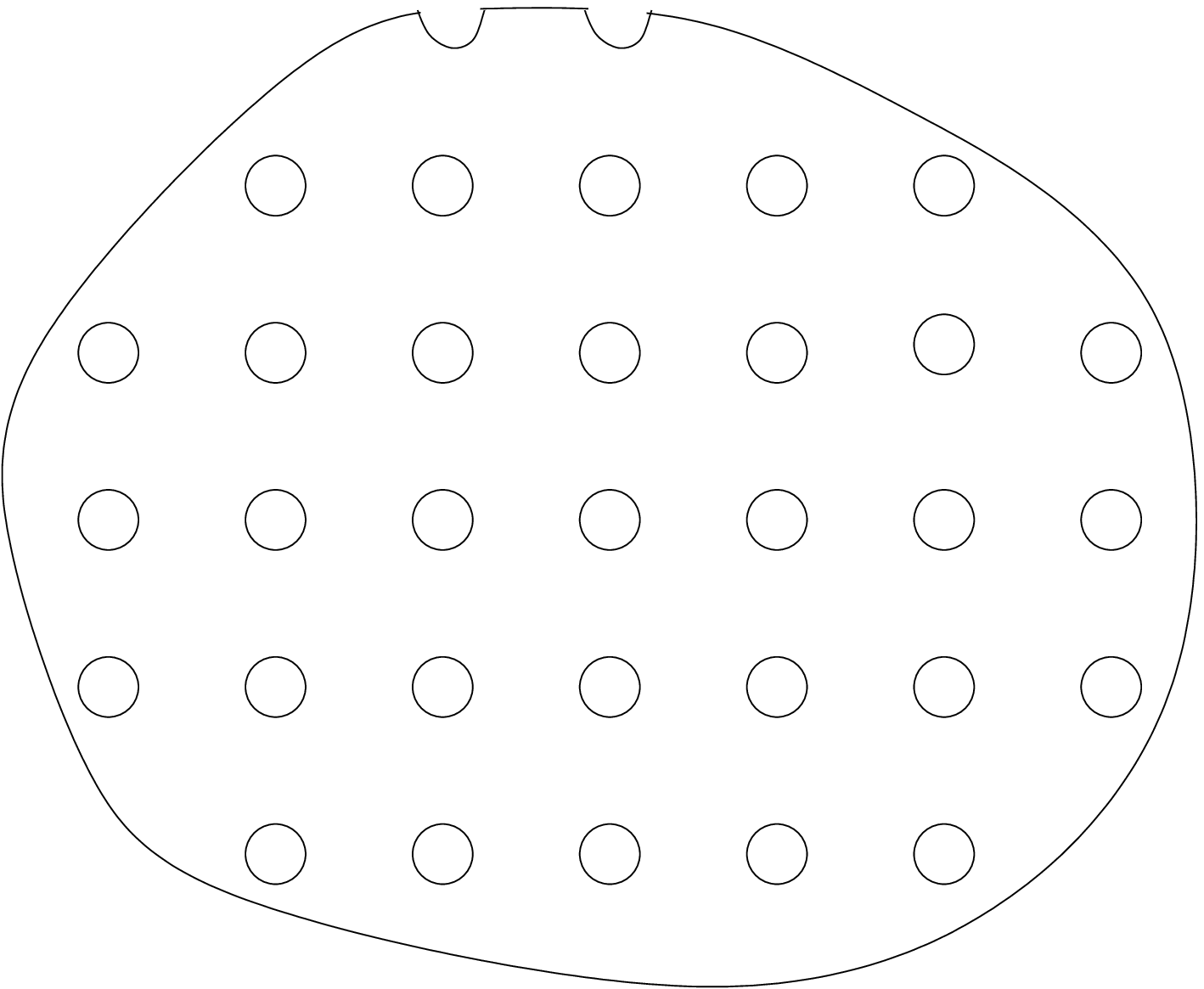}
  \qquad \quad
  \includegraphics[width=4truecm]{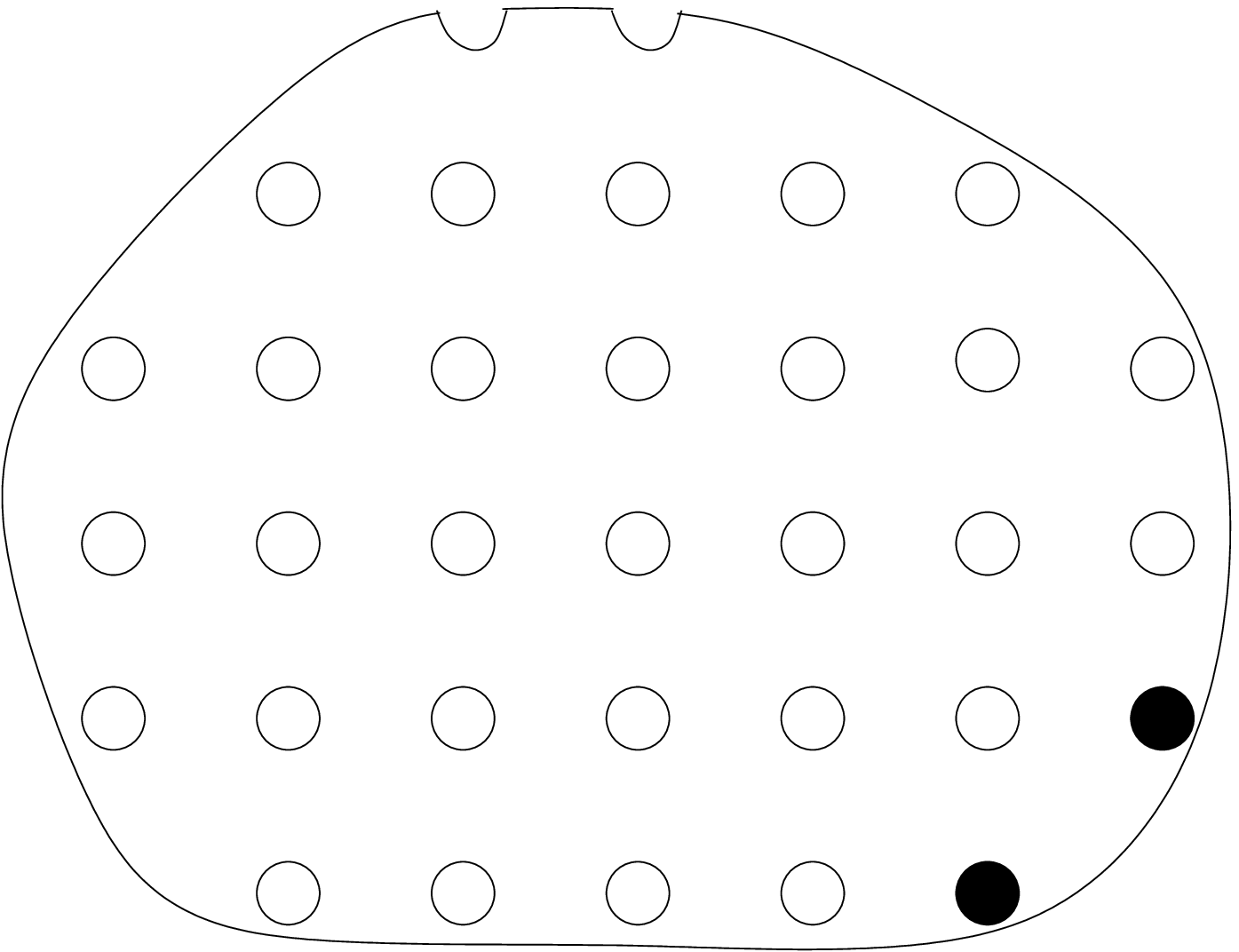}
}
\caption{Left: a situation when~\eqref{eq:geo_perfo} holds. Right: a situation when~\eqref{eq:geo_perfo} does not hold (the boundary $\partial \Omega$ is tangent to the boundary of some perforations, including those shown in black; the domain $\Omega^\eps$ is thus singular).
\label{fig:oui}}
\end{figure}

Note that, under the above assumptions, we have, when $\eps$ is sufficiently small, that $\left| \partial \Omega^\eps \cap \partial \Omega \right| \geq c \left| \partial \Omega \right|$ for some $c>0$ independent of $\eps$.

\medskip

We next address in Theorem~\ref{perf_neumann_theorem_b_osc_gene} the case of a general advection field $b$. The proof of that theorem is postponed until Appendix~\ref{section_homogeneous_neumann_bc_gene}, and is essentially performed by showing that one can get back to a situation where~\eqref{eq:structure}, and thus~\eqref{eq:msp}, holds.  

\begin{theorem}
\label{perf_neumann_theorem_b_osc_gene}
We make the same assumptions as for Theorem~\ref{perf_neumann_theorem_b_osc_div0}, except that here $b$ does not satisfy~\eqref{eq:structure} or~\eqref{eq:msp}.

Then Problem~\eqref{perf_neumann_pb_perforated} is well-posed and its solution $u^\eps$ satisfies
$$
\lim_{\eps \to 0} \left\| u^\eps - u^\star -\eps \sum_{i=1}^d w_i\left(\frac{\cdot}{\eps}\right) \partial_{x_i} u^\star \right\|_{H^1(\Omega^\eps)} = 0,
$$
where 
$u^\star$ is the solution to the problem~\eqref{eq:pb_homog_b} and $w_i$ is the solution to the cell problem~\eqref{perf_neumann_pb_corrector_b_per_oscill}. In the homogenized problem~\eqref{eq:pb_homog_b}, the matrix $A^\star$ and the vector $b^\star$ are constant and given by~\eqref{eq:def_astar_b} and~\eqref{eq:def_bstar_b}.
\end{theorem}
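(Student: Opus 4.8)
The plan is to reduce the case of a general advection field $b$ to the structured case already treated in Theorem~\ref{perf_neumann_theorem_b_osc_div0}, where $b$ satisfies~\eqref{eq:msp}. The key observation is that the advection term can be split into a part that already has the good structure and a remainder that can be absorbed into a modified diffusion tensor and a zeroth-order term, or equivalently rewritten in skew-symmetric/gradient form. More precisely, first I would decompose $b$ (periodic, in $(W^{1,p}(Y \setminus \overline{\mathcal{O}}))^d$) as a divergence-free, zero-flux field plus a corrector: one seeks a periodic scalar $\pi$ solving a cell problem of the form $-\Delta \pi = \text{div } b$ in ${\cal P}$ with Neumann data $\nabla\pi\cdot n = b\cdot n$ on $\partial\mathcal{O}$, which is solvable precisely because the compatibility condition $\int_{Y\setminus\overline{\mathcal O}}\text{div } b = \int_{\partial\mathcal O} b\cdot n$ (noted in the Remark preceding Theorem~\ref{perf_neumann_theorem_b_osc_div0}) is an identity. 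Then $\widetilde{b} := b - \nabla\pi$ satisfies $\text{div }\widetilde{b} = 0$ in $Y\setminus\overline{\mathcal O}$ and $\widetilde{b}\cdot n = 0$ on $\partial\mathcal O$, i.e.~it satisfies~\eqref{eq:msp}.

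The next step is to track how this decomposition affects the equation~\eqref{perf_neumann_pb_perforated}. Writing $\widehat{b}^\eps\cdot\nabla u^\eps = \widetilde{b}(\cdot/\eps)\cdot\nabla u^\eps + \nabla\pi(\cdot/\eps)\cdot\nabla u^\eps$, the first summand is handled verbatim by Theorem~\ref{perf_neumann_theorem_b_osc_div0} and produces the structured homogenized advection. The delicate point is the gradient part $\nabla\pi(\cdot/\eps)\cdot\nabla u^\eps$: because $\pi$ is periodic, $\nabla\pi(\cdot/\eps)$ oscillates at scale $\eps$ and is \emph{not} small in any strong norm, so one cannot simply discard it. The right way to treat it is to integrate by parts at the level of the variational formulation and exploit that $\nabla\pi\cdot n = b\cdot n$ on $\partial\mathcal O$ together with periodicity, so that the contribution either cancels against boundary terms or recombines into the same corrector cell problem~\eqref{perf_neumann_pb_corrector_b_per_oscill}. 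I expect that, after this manipulation, the oscillating gradient term contributes only through its average, and that average is exactly captured by the formula~\eqref{eq:def_bstar_b} for $b^\star$, which is stated directly in terms of the full field $b$ and not of $\widetilde b$. The fact that the \emph{same} corrector $w_i$ (defined by the pure-Laplacian cell problem~\eqref{perf_neumann_pb_corrector_b_per_oscill}) appears in both theorems is the structural reason this reduction closes: the corrector depends only on the geometry and the diffusion operator, not on $b$.

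Accordingly, the main obstacle is verifying that the homogenized coefficients for the reduced problem (with field $\widetilde b$) coincide with $A^\star$ and $b^\star$ as given by~\eqref{eq:def_astar_b}--\eqref{eq:def_bstar_b} written with the \emph{original} $b$. For $A^\star$ this is immediate, since $A^\star$ does not involve $b$ at all. For $b^\star$ one must show $\frac{1}{|Y|}\int_{Y\setminus\overline{\mathcal O}} \widetilde{b}\cdot(e_i+\nabla w_i) = \frac{1}{|Y|}\int_{Y\setminus\overline{\mathcal O}} b\cdot(e_i+\nabla w_i)$, i.e.~that $\int_{Y\setminus\overline{\mathcal O}} \nabla\pi\cdot(e_i+\nabla w_i) = 0$. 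This is the crux: I would establish it by integrating by parts, using $-\Delta w_i = 0$ in ${\cal P}$, the Neumann condition $(\nabla w_i+e_i)\cdot n = 0$ on $\partial\mathcal O$ from~\eqref{perf_neumann_pb_corrector_b_per_oscill}, and the periodicity of both $\pi$ and $w_i$ to kill the boundary contributions on $\partial Y$. Once this orthogonality identity is in place, Theorem~\ref{perf_neumann_theorem_b_osc_gene} follows directly from Theorem~\ref{perf_neumann_theorem_b_osc_div0} applied to $\widetilde b$, together with the well-posedness of~\eqref{perf_neumann_pb_perforated} (which holds under $\text{div }\widehat b^\eps$ not necessarily signed, but can be recovered here since the skew-symmetric reformulation controls the advection term, or alternatively via the Lax--Milgram/Lions--Stampacchia framework after the gradient part is absorbed).
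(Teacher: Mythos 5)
Your first and last steps coincide with the paper's proof: the decomposition of $b$ into a gradient plus a divergence-free field with vanishing normal flux on $\partial\mathcal{O}$ is exactly Lemma~\ref{lem:decompo} (modulo a sign slip: for $b-\nabla\pi$ to be divergence-free you need $\Delta\pi=\text{div}\,b$, not $-\Delta\pi=\text{div}\,b$), and your orthogonality identity $\int_{Y\setminus\overline{\mathcal O}}\nabla\pi\cdot(e_i+\nabla w_i)=0$, obtained by integrating by parts against the corrector equation~\eqref{perf_neumann_pb_corrector_b_per_oscill} and periodicity, is precisely how the paper shows that $b^\star$ may be written with the original $b$ in place of the divergence-free part. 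The genuine gap is the middle step: you must compare, at fixed $\eps$, the solution $u^\eps$ of the general problem with the solution $v^\eps$ of the problem driven by the structured field $\widetilde b$, and your proposal to handle the leftover term $\nabla\pi(\cdot/\eps)\cdot\nabla u^\eps$ by ``integration by parts at the variational level'' so that it ``contributes only through its average'' is not a working mechanism. That term is $O(1)$ in $L^2$ (the field $\nabla\pi(\cdot/\eps)$ does not go to zero in any strong norm, as you yourself note), so $u^\eps-v^\eps$ is driven by an order-one source and a naive perturbation estimate fails; integrating by parts produces either $\Delta u^\eps$ (not controlled in $L^2$) or the unsigned, $\eps^{-1}$-large quantity $\eps^{-1}(\text{div}\,b)(\cdot/\eps)\,u^2$. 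The paper's device, absent from your proposal, is the exponential change of weight $\sigma_\eps=\exp\left(-\frac{\eps}{\alpha}\phi_{\rm per}(\cdot/\eps)\right)$: multiplying the equation by $\sigma_\eps$ absorbs the gradient part of the advection \emph{exactly} into the divergence-form diffusion $-\alpha\,\text{div}(\sigma_\eps\nabla u^\eps)$ (see~\eqref{perf_neumann_pb_perforated_transfo}), and since $\phi_{\rm per}$ is bounded, $\|\sigma_\eps-1\|_{L^\infty(\Omega^\eps)}=O(\eps)$; an energy estimate on $u^\eps-v^\eps$, exploiting the uniform coercivity of the comparison problem~\eqref{perf_neumann_pb_perforated_transfo2}, then yields $\|u^\eps-v^\eps\|_{H^1(\Omega^\eps)}\to 0$, after which Theorem~\ref{perf_neumann_theorem_b_osc_div0} applies verbatim to $v^\eps$.

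A second, related weakness is your closing remark on well-posedness. For the Neumann problem the skew-symmetric reformulation is not available (the paper points out that $a(u,v)\neq c(u,v)$ in general for $u,v\in V^\eps$), and for a general $b$ the form $a^\eps$ is not coercive: the coercivity estimate~\eqref{eq:coer1} uses precisely the structure conditions~\eqref{eq:structure} that are assumed to fail here. Consequently neither Lax--Milgram nor uniform-in-$\eps$ $H^1$ bounds on $u^\eps$ --- which any direct two-scale argument along the lines of Theorem~\ref{perf_neumann_theorem_b_osc_div0} would require as a starting point --- are available without further work; the paper invokes inf-sup (Fredholm-type) theory for well-posedness and obtains the needed control on $u^\eps$ only indirectly, through the comparison with the coercive problem for $v^\eps$. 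Your proof would need to supply one of these mechanisms before the reduction can close.
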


\subsection{Error analysis for the Dirichlet problem}
\label{section_estimate_adv_msfem}

For the Dirichlet problem, we have announced at the end of the introduction that the best numerical method among those that we have considered is the Adv-MsFEM approach with advective bubble functions, the variational formulation of which is~\eqref{varf_adv_msfem_dirichlet} (see Section~\ref{section_numerical_dirichlet} below for the numerical results). The error estimate of that approach is the purpose of the following theorem. It is the extension of a similar result for the diffusion problem~\cite[Theorem 2.2]{lebris2014msfem} establishing for that case the exact analogue estimate as~\eqref{estimate_adv_msfem} below. This is not unexpected since, in both cases, the \emph{same} differential operator is present in the original equation and in the definition of all the basis functions. The proof of Theorem~\ref{theorem_adv_msfem} is postponed until Appendix~\ref{sec:proof_theorem_adv_msfem}.

\begin{theorem}
\label{theorem_adv_msfem}

We assume that $d=2$ and that the assumptions of Theorem~\ref{theorem_dirichlet_perf} hold. We furthermore assume that $\text{div } b \in L^\infty(Y \setminus \overline{\mathcal{O}})$. Let $u^\eps$ be the solution to~\eqref{pb_perforation_num}.

We assume that, for any mesh element $K$, we have $K \cap B_\eps \neq \emptyset$, where $B_\eps$ defined by~\eqref{eq:periodic_perfo} is the set of perforations. We also assume (and this is a purely technical assumption that does not matter for the numerical practice) that the slopes of the edges of the mesh elements are rational numbers. More precisely, we suppose that the equation defining any internal edge $E$ of the mesh reads as $\dis x_2=\frac{p_E}{q_E}x_1 + c_E$ for some $c_E\in \R$, $p_E\in \Z$ and $q_E\in \N^\star$ that are coprime, with 
\begin{equation}
|q_E|\leq C,
\label{ineq_15}
\end{equation}
for a constant $C$ independent of the edge considered in the mesh and of the mesh size $H$. Then, the Adv-MsFEM approximation $u^\eps_H$, solution to~\eqref{varf_adv_msfem_dirichlet}, satisfies
\begin{equation}
\|u^\eps-u^\eps_H\|_{H^1_H(\Omega^\eps)}\leq C\eps \left(\sqrt{\eps} + H +\sqrt{\frac{\eps}{H}} \right)\|f\|_{H^2(\Omega)},
\label{estimate_adv_msfem}
\end{equation}
for a constant $C$ independent of $\eps$, $H$ and $f$, where we have used the notation $\dis \| v \|_{H^1_H(\Omega^\eps)} = \sqrt{\sum_{K \in \mathcal{T}_H} \| v \|^2_{H^1(K \cap \Omega^\eps)}}$.
\end{theorem}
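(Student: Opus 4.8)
The plan is to follow the classical MsFEM error-analysis strategy, split into three parts: a best-approximation (quasi-optimality) step, an a priori estimate on how well the multiscale space approximates $u^\eps$, and a careful tracking of the $\eps$-dependent scalings. Because the basis functions in~\eqref{pb_adv_msfem_basis_dirichlet}--\eqref{pb_adv_msfem_bubble_dirichlet} are built using the \emph{same} operator $-\alpha\Delta + \widehat b^\eps\cdot\nabla$ as in~\eqref{pb_perforation_num}, the analysis should mirror the diffusion case of~\cite[Theorem 2.2]{lebris2014msfem} closely, with the advection term handled through the skew-symmetric form $c_H$.

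First I would establish coercivity and continuity of $c_H$ on $V^{\text{adv bubble}}_H$ (with its broken norm $\|\cdot\|_{H^1_H(\Omega^\eps)}$) under the standing hypothesis $\text{div}\,b\le 0$. Coercivity follows from~\eqref{eq:montreal}, which gives $c_H(v,v)\ge \alpha\sum_K\|\nabla v\|^2_{L^2(K\cap\Omega^\eps)}$, combined with a broken Poincar\'e inequality valid on $W_H$ thanks to the weak continuity conditions $\int_E[[u]]=0$ and the homogeneous condition inside $B^\eps$. Since the approximation is nonconforming, I expect a Strang-type (second Lemma of Strang) estimate: the error $\|u^\eps-u^\eps_H\|_{H^1_H(\Omega^\eps)}$ is controlled by a best-approximation term $\inf_{v_H\in V^{\text{adv bubble}}_H}\|u^\eps-v_H\|_{H^1_H(\Omega^\eps)}$ plus a consistency error measuring the jump contributions across edges when $u^\eps$ is tested against the broken bilinear form. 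The weak Crouzeix-Raviart continuity is precisely what keeps this consistency term small; the bubble-orthogonality $\int_K u=0$ should control the elementwise zero-average part.

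The central quantitative input is the homogenization expansion from Theorem~\ref{theorem_dirichlet_perf}: $u^\eps\approx \eps^2 w(\cdot/\eps)\,f$ with the bound~\eqref{estimation_u} of order $\eps^{3/2}\mathcal{N}(f)$. The strategy for the best-approximation term is to construct an explicit candidate in $V^{\text{adv bubble}}_H$ that reproduces this two-scale structure: one interpolates the macroscopic profile $f$ (using its $H^2$ regularity) and uses the multiscale basis functions to capture the oscillatory corrector $w(\cdot/\eps)$. This is where the rationality assumption~\eqref{ineq_15} on the edge slopes enters: it guarantees that each internal edge $E$ meets the periodic perforation pattern in a controlled, quasi-periodic way, so that the edge integrals $\int_E \Phi^{\eps,E}_{\text{D}}$ and the boundary-layer contributions near $\partial K$ can be estimated uniformly with constants independent of $H$. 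The three terms in~\eqref{estimate_adv_msfem} should then arise as: the $H$ term from the macroscopic $\mathbb P^1$-type interpolation error of the slowly varying profile, the $\sqrt\eps$ term from the interior cell-problem/corrector mismatch, and the $\sqrt{\eps/H}$ term from the boundary-layer near element edges $\partial K$ where the periodic corrector fails to satisfy the exact local boundary conditions (a layer of width $\eps$ along a boundary of length $\sim H$ per element, of which there are $\sim H^{-d}$, producing the classical $\sqrt{\eps/H}$ resonance error). The overall prefactor $\eps$ reflects that, after the $\eps^2$ rescaling, gradients of $u^\eps$ are themselves of size $\eps$.

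The hard part will be the boundary-layer estimate yielding the resonance term $\sqrt{\eps/H}$, and controlling it \emph{uniformly} in the location of the perforations relative to the edges. The difficulty is that the local problems~\eqref{pb_adv_msfem_basis_dirichlet} impose Crouzeix-Raviart conditions (only $\int_{E'}\Phi^{\eps,E}_{\text{D}}=\delta_{E,E'}$, not pointwise values) on edges that cut arbitrarily through the periodic holes, so the usual corrector-subtraction argument must be adapted to holes straddling $\partial K$; here the robustness of the Crouzeix-Raviart construction established in~\cite{lebris2014msfem}, together with~\eqref{ineq_15}, is essential to avoid constants that blow up with $H/\eps$. I would isolate this as a lemma bounding $\|\nabla(\text{candidate}-u^\eps)\|_{L^2}$ in a tubular neighborhood of the element edges, and otherwise the advection term, being lower order after the skew-symmetrization in $c_H$ and absorbed by coercivity, should not change the rates relative to the pure-diffusion result, which is exactly why~\eqref{estimate_adv_msfem} coincides with the analogue estimate of~\cite[Theorem 2.2]{lebris2014msfem}.
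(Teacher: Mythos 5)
Your overall skeleton does match the paper's proof in its broad lines: coercivity of $c_H$ via~\eqref{eq:montreal} combined with the broken Poincar\'e inequality~\eqref{poincare_WH}, a Strang-type split into a best-approximation part and a consistency part (the paper estimates $u^\eps-v_H$ and $u^\eps_H-v_H$ separately), and the homogenization expansion $u^\eps\approx\eps^2\,w(\cdot/\eps)\,f$ of Theorem~\ref{theorem_dirichlet_perf}, through~\eqref{estimation_u}, as the quantitative input. What you do not exploit, however, is the decisive structural point: the paper's candidate is the explicit function $v_H=\sum_{K}\Pi_H f\,\Psi^{\eps,K}_{\text{D}}+\sum_{E}\bigl(\int_E u^\eps\bigr)\,\Phi^{\eps,E}_{\text{D}}$, which, because the basis functions~\eqref{pb_adv_msfem_basis_dirichlet}--\eqref{pb_adv_msfem_bubble_dirichlet} are built with the full operator, satisfies $-\alpha\Delta v_H+\widehat b^\eps\cdot\nabla v_H=\Pi_H f$ on every $K\cap\Omega^\eps$ and has \emph{constant} conormal flux $\bigl(\alpha\nabla v_H-\frac{1}{2}\widehat b^\eps v_H\bigr)\cdot n$ on every inner edge. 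After elementwise integration by parts, the entire consistency error therefore reduces to the volume residual $f-\Pi_H f$ (giving the $\eps H$ contribution via~\eqref{estimate_PiH}) plus edge terms of the form $\eps^2\sum_E\int_{E\cap\Omega^\eps}\bigl(\alpha\nabla(w(\cdot/\eps)f)-\frac{1}{2}\widehat b^\eps\,w(\cdot/\eps)f\bigr)\cdot n\,[[\phi]]$; no comparison between $u^\eps$ and the basis functions \emph{inside} elements is ever needed.

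The genuine gap is in your treatment of the resonance term $\sqrt{\eps/H}$. You propose the classical tubular-neighborhood boundary-layer lemma (corrector mismatch in an $\eps$-layer along $\partial K$), which is the mechanism for MsFEM with linear or oversampled boundary conditions; with Crouzeix--Raviart conditions it is both unavailable and unnecessary: the local problems prescribe only edge averages and constant fluxes, so the basis functions are not pointwise close to any corrector profile near $\partial K$, and your isolated lemma has no evident proof. In the paper the term comes entirely from the edge integrals above: on each edge $E$ the trace of the $Y$-periodic flux $\bigl(\alpha\nabla w-\frac{1}{2}b\,w\bigr)(\cdot/\eps)\cdot n$ is a genuinely periodic one-dimensional function with period $\eps q_E$ --- this, and not a control of how edges ``meet'' the perforations or of the integrals $\int_E\Phi^{\eps,E}_{\text{D}}$, is exactly where the rational-slope assumption~\eqref{ineq_15} enters --- and after subtracting its mean one applies the zero-mean oscillation Lemma~\ref{lemma_periodic}, the weak continuity $\int_E[[\phi]]=0$, the trace estimates of Lemmas~\ref{lemma_trace} and~\ref{corollary_trace}, and an $L^2$--$H^1$ interpolation, to obtain a bound of order $\sqrt{\eps}\,\|f\|_{H^1(E)}\,\|\,[[\phi]]\,\|_{H^{1/2}(E)}$ per edge; summing over edges, with $\|f\|^2_{H^1(E)}\leq C\bigl(H^{-1}\|f\|^2_{H^1(K)}+H\|\nabla f\|^2_{H^1(K)}\bigr)$, yields precisely $C\eps\sqrt{\eps/H}$, while the mean part of the flux gives another $\eps H$ by the standard Crouzeix--Raviart argument using $c_E=|E|^{-1}\int_E f$. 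Your plan, as written, would leave~\eqref{ineq_15} effectively unused and the key estimate unproven; replacing the boundary-layer lemma by this one-dimensional periodic-oscillation argument along edges is what actually closes the proof.
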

Note that the constant $C$ in~\eqref{estimate_adv_msfem} a priori depends on $\alpha$ (in particular through the dependency of the solution to~\eqref{pb_corrector_perforated} upon $\alpha$). 

The assumption that all the elements of the coarse mesh intersect the perforations is a mild assumption. Recall indeed that the size $H$ of the elements is expected to be much larger than the distance $\eps$ between perforations, and that the perforations are periodically located.

\begin{remark}[Relative error in the $L^2$-norm]
The proof of Theorem~\ref{theorem_adv_msfem} actually shows that
$$
|u^\eps-u^\eps_H |_{H^1_H(\Omega^\eps)} \leq C\eps \left(\sqrt{\eps} + H +\sqrt{\frac{\eps}{H}} \right)\|f\|_{H^2(\Omega)}.
$$
Recalling that $|u^\eps|_{H^1(\Omega^\eps)}$ is of the order of $\eps$ (see~\eqref{estimation_u}), we observe that the relative error (in norm $| \cdot |_{H^1_H(\Omega^\eps)}$) between $u^\eps$ and $u^\eps_H$ is of the order of $\sqrt{\eps} + H + \sqrt{\eps/H}$, and thus small.

Using the Poincar\'e inequality recalled in~\eqref{ineq_13} below, we get that $\|u^\eps \|_{L^2(\Omega^\eps)} \leq C \eps |u^\eps|_{H^1(\Omega^\eps)}$, and thus $\| u^\eps \|_{L^2(\Omega^\eps)}$ is of the order of $\eps^2$. Likewise, using the Poincar\'e inequality recalled in~\eqref{poincare_WH} below, we get that $\|u^\eps - u^\eps_H\|_{L^2(\Omega^\eps)} \leq C \eps |u^\eps-u^\eps_H|_{H^1_H(\Omega^\eps)}$, and thus $\| u^\eps-u^\eps_H\|_{L^2(\Omega^\eps)}$ is of the order of $\eps^2 (\sqrt{\eps} + H + \sqrt{\eps/H})$. The relative error in the $L^2$-norm is hence also of the order of $\sqrt{\eps} + H + \sqrt{\eps/H}$, and thus also small.
\end{remark}

\medskip

Although we suspect that a similar estimate to that of Theorem~\ref{theorem_adv_msfem} above can be established for the problem with Neumann boundary conditions~\eqref{perf_neumann_pb_perforated}, we have not pursued in this direction.

\section{Numerical results}
\label{section_numerical_results}

This section presents our numerical results. They have all been performed with FreeFem++~\cite{MR3043640}, on the following test case. We consider the two-dimensio\-nal domain $\Omega=(0,1)^2$. Except for Section~\ref{sec:num-non-per}, its subdomain $\Omega^\eps$ is a periodically perforated domain defined by
\begin{equation}
\Omega^\eps=\left\{x\in \Omega, \quad \chi\left(\frac{x}{\eps}\right)=1\right\},
\label{def_Omega_eps_periodic}
\end{equation}
where $\chi$ is the extension by $Y$-periodicity, for the periodicity cell $Y=(0,1)^2$, of the characteristic function $\mathds{1}_{Y\setminus\overline{\mathcal{O}}}$, where ${\mathcal{O}} \subset Y$ defines a perforation.

For either of the problems considered (\eqref{pb_perforation_num} or~\eqref{perf_neumann_pb_perforated}), and for either of our approaches, based on the diffusion operator only or the full advection-diffusion operator, we will investigate several issues. The first issue is how enriching the approach with bubble functions affects the accuracy. Of course, this enrichment comes at the price of increasing the number of degrees of freedom. We will observe that the gain in accuracy is much higher than that obtained by, say, reducing the size of the coarse mesh by a factor two. Other issues are the influence of the P\'eclet number (measuring the relative amplitude of the advection with respect to the diffusion) and that of the small scale $\eps$ defining both the size of the perforations and their typical distance. Many of these issues are examined upon considering a range of mesh sizes $H$ for the coarse mesh. This range is typically chosen as $H$ varying from~$\eps/10$ to~$10\,\eps$. One must bear in mind that capturing all the details of the oscillatory solutions~$u^\eps$ using a standard FEM approach would require choosing a mesh size in any event smaller, and in most cases much smaller, than~$\eps/10$. At the other end, choosing~$H$ larger than $10\,\eps$ would result in a prohibitively expensive offline cost. Thus the choice of our typical range of values of~$H$.
 
Beside comparing the various approaches considered, and assessing their performance in function of the various parameters of the problem, we will also specifically assess their robustness with respect to the location of the perforations. To this aim, we consider two locations for the perforation within the periodicity cell $Y=(0,1)^2$:
$$
\mathcal{O}= \mathcal{O}_1= (0.25,0.75)^2
$$
and
$$
\mathcal{O} = \mathcal{O}_2=(0,0.25) \times(0.25,0.75)\cup (0.75,1)\times(0.25,0.75).
$$
The shape of the perforations is the same (squares of size $0.5\eps$). The difference lies in the relative position of the mesh with respect to the perforations (see Figure~\ref{fig:les_deux_perfos}). One set of perforations is obtained from the other by shifting the perforations by $0.5\,\eps$ in the $x$ direction. When $\mathcal{O} = \mathcal{O}_1$, the perforations do not intersect the edges of the mesh elements (which are taken aligned with the periodicity cells). In contrast, when $\mathcal{O} = \mathcal{O}_2$, many edges are intersected by the perforations. In doing so, we have in mind, like in our previous work~\cite{lebris2014msfem}, to use these two specific periodic geometries to emphasize which approaches can easily carry over to the case of non-periodic perforations, where a typical mesh may often intersect the perforations (we recall that such a non-periodic case is addressed in Section~\ref{sec:num-non-per}). To some extent, the two periodic geometries we consider respectively represent the best case scenario (when perforations are all interior to mesh elements) and the worst case scenario (when ``half'' the perforations intersect the boundaries of mesh elements).

\begin{figure}[htbp]
\centerline{
  \includegraphics[width=4truecm]{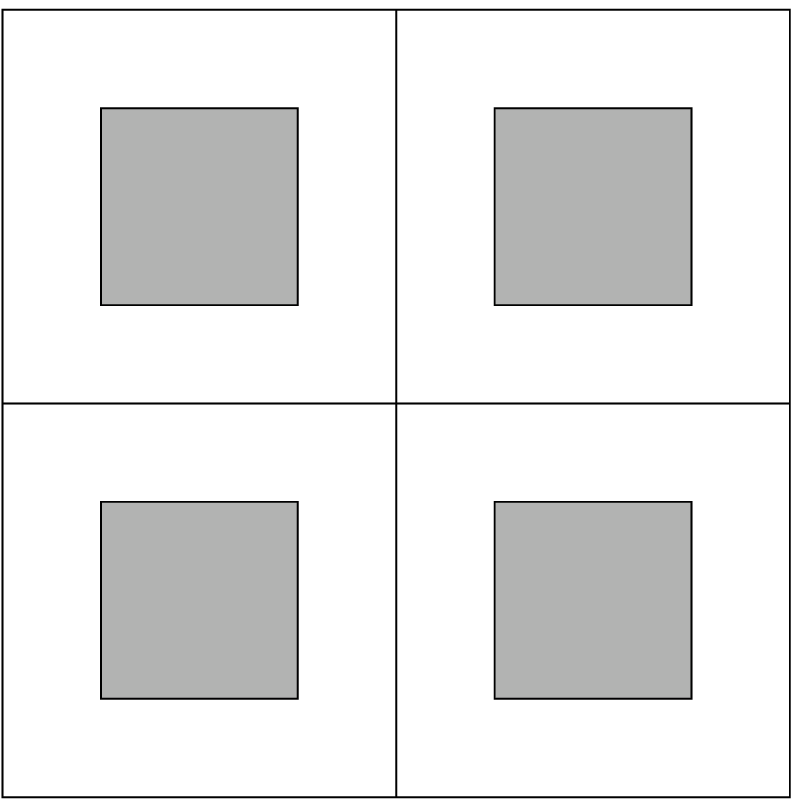}
  \qquad \qquad
  \includegraphics[width=4truecm]{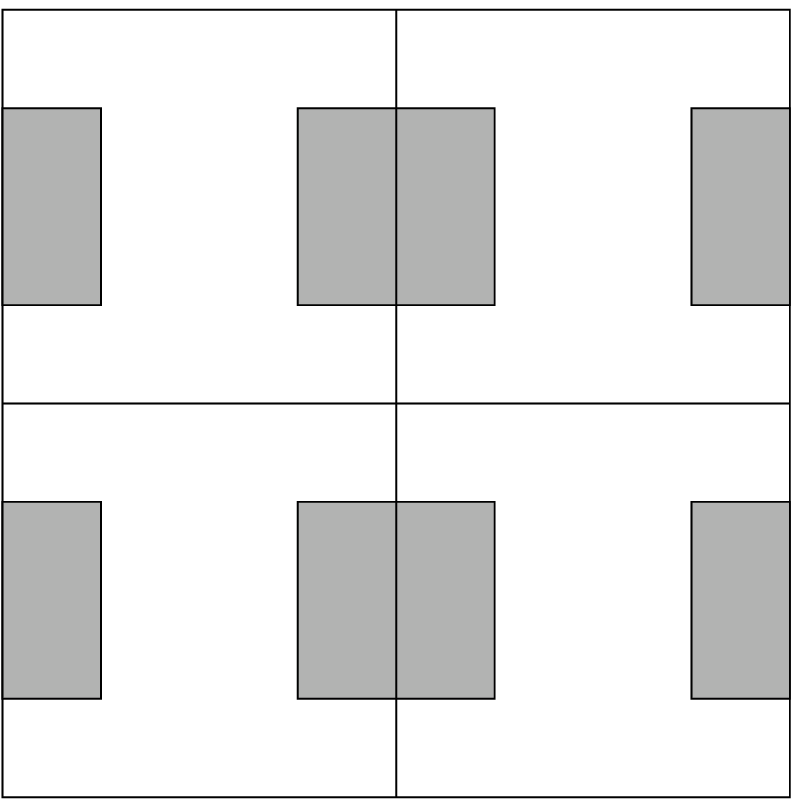}
}
\caption{Representation of one coarse element in dimension 2 (of size $H \times H$) containing 4 periodic cells (we assume on this figure that $\eps = H/2$). Perforations are represented in grey. Left: the perforation $\mathcal{O}_1$ does not intersect the mesh edges. Right: the perforation $\mathcal{O}_2$ intersects some mesh edges.
\label{fig:les_deux_perfos}}
\end{figure}

The advection field $\widehat{b}^\eps$ we consider is proportional to the constant field $b=(1,1)^T$. 
Depending on the situation considered, the proportionality constant is either 1 or $1/\eps$, for reasons that have been made clear above. 

The reference solution $u_{\text{ref}}$, and all the relative errors that will be defined with respect to that reference solution, are computed on the fine mesh. The reference solution itself is computed using the standard $\mathbb{P}^1$ Finite Element method on this fine mesh. We measure the accuracy using, on domains $\omega\subset\Omega^\eps$, the $H^1$ broken norm 
\begin{equation}
\label{def-broken-norm}
|u|_{H^1_H(\omega)}=\left(\sum_{K\in\mathcal{T}_H}\|\nabla u\|^2_{L^2(K\cap \omega)}\right)^{1/2},
\end{equation}
and the relative errors
\begin{equation}
\label{def-relative-error}
e_{H^1(\omega)}(u)=\frac{|u-u_{\text{ref}}|_{H^1_H(\omega)}}{|u_{\text{ref}}|_{H^1(\omega)}},
\end{equation}
in the \textit{whole} domain ($\omega=\Omega^\eps$) and, possibly, separately inside and outside the boundary layer when there is such a boundary layer close to some portion of the boundary of the domain $\Omega$ (see Section~\ref{section_numerical_neumann}).

\medskip

The results for Problem~\eqref{pb_perforation_num}, where we impose homogeneous Dirichlet boundary conditions on the perforations, are presented in Section~\ref{section_numerical_dirichlet}, while Section~\ref{section_numerical_neumann} contains those for the Neumann problem~\eqref{perf_neumann_pb_perforated}. We next turn to a non-periodic test-case in Section~\ref{sec:num-non-per}.

In all what follows, we choose $\dis f(x,y) = \sin\left(\frac{\pi}{2}x\right)\sin\left(\frac{\pi}{2}y\right)$ as right-hand side for the advection-diffusion equation considered (we have checked that our results and conclusions do not sensitively depend on the choice of $f$). 
 
\subsection{Homogeneous Dirichlet boundary condition}
\label{section_numerical_dirichlet}

This section is devoted to the comparison of our MsFEM variants for problem~\eqref{pb_perforation_num}. In short, the conclusion of the numerical tests discussed below is the following. By far, the best possible approach is the one using a basis of functions built upon the full advection-diffusion operator enriched with bubble functions built likewise (namely, the approach that we denote ``Adv-MsFEM + adv Bubbles''), without requiring any additional stabilization. However, it may be the case that one does not wish to include the tranport field in the definition of the basis functions. One can then use an approach enriched with bubble functions, with all basis functions built upon the sole diffusion operator, and without any stabilization (namely, the ``MsFEM + Bubbles'' approach). However, the latter approach is not robust in the limit of large P\'eclet numbers or small values of $\eps$.

\medskip

We now proceed with more details. Throughout the section, except in Section~\ref{section_boundary_condition_adv_msfem}, the Adv-MsFEM and its variants are defined by using the bilinear form $c_H$, as detailed in Section~\ref{section_adv_msfem_CRB_dir}. In Section~\ref{section_boundary_condition_adv_msfem}, we will compare these variants with the variants defined by using the bilinear form $a_H$ (see Remark~\ref{rem:cH_ou_aH}).

In Section~\ref{section_ajout_fonctions_bulles}, we study the added value of bubble functions. Sections~\ref{section_influence_nombre_peclet} and~\ref{section_influence_size_perforations} respectively explore the influence of the P\'eclet number and that of the small scale $\eps$.

\subsubsection{Adding bubble functions} 
\label{section_ajout_fonctions_bulles}
 
We fix $\eps = 0.03$ and $\alpha = 0.25$. The perforations are defined by the set $\mathcal{O} = \mathcal{O}_1$. We have explicitly checked, alternately choosing $\mathcal{O} = \mathcal{O}_2$,
that all our results and conclusions in this section are qualitatively insensitive to the location of the perforations (results not shown).

To start with, we consider the (standard) MsFEM approach. We observe on Figure~\ref{test_34_H1_mB_CRBs} that adding bubble functions significantly improves the accuracy, and that the best option is that with \emph{advective} bubble functions. The same comparison holds for the Adv-MsFEM approach (see also Figure~\ref{test_34_H1_mB_CRBs}).

\begin{figure}[h!]
  \begin{center}
\begin{tikzpicture}
\begin{axis}[extra x ticks={0.03} , extra x tick labels={\color{red}$H=\eps$},
extra x tick style={grid=major,red, dashed, tick label style={rotate=90,anchor=east}}, xlabel={$H$},ylabel={$H^1$ relative error},legend entries={
{MsFEM},{MsFEM + Bubbles},{MsFEM + adv Bubbles},
{Adv-MsFEM},{Adv-MsFEM + Bubbles},{Adv-MsFEM + adv Bubbles}
},legend style={at={(1.9,0.55)}},xmode=log,ymode=log]
\addplot[olive,mark = diamond] table[x expr=1/\thisrowno{0} ,y index=9]{data_perforation/test_156_err_adv_diff_msfemB-CR7.dat};
\addplot[green,mark = diamond] table[x expr=1/\thisrowno{0} ,y index=9]{data_perforation/test_156_err_adv_diff_msfemB-CRB7.dat};
\addplot[orange,mark = o] table[x expr=1/\thisrowno{0} ,y index=9]{data_perforation/test_156_err_adv_diff_msfemB-CRB7adv.dat};
\addplot[magenta,mark = *] table[x expr=1/\thisrowno{0} ,y index=9]{data_perforation/test_156_err_adv_diff_msfemA-CR7.dat};
\addplot[gray,mark = +] table[x expr=1/\thisrowno{0} ,y index=9]{data_perforation/test_156_err_adv_diff_msfemA-CRB7diff.dat};
\addplot[magenta,mark=square] table[x expr=1/\thisrowno{0} ,y index=9]{data_perforation/test_156_err_adv_diff_msfemA-CRB7.dat};
\end{axis}
\end{tikzpicture}
\end{center}
\caption{[Dirichlet Problem~\eqref{pb_perforation_num}] Addition of bubble functions to MsFEM and Adv-MsFEM (all basis functions satisfy CR boundary conditions).
}
\label{test_34_H1_mB_CRBs}
\end{figure}
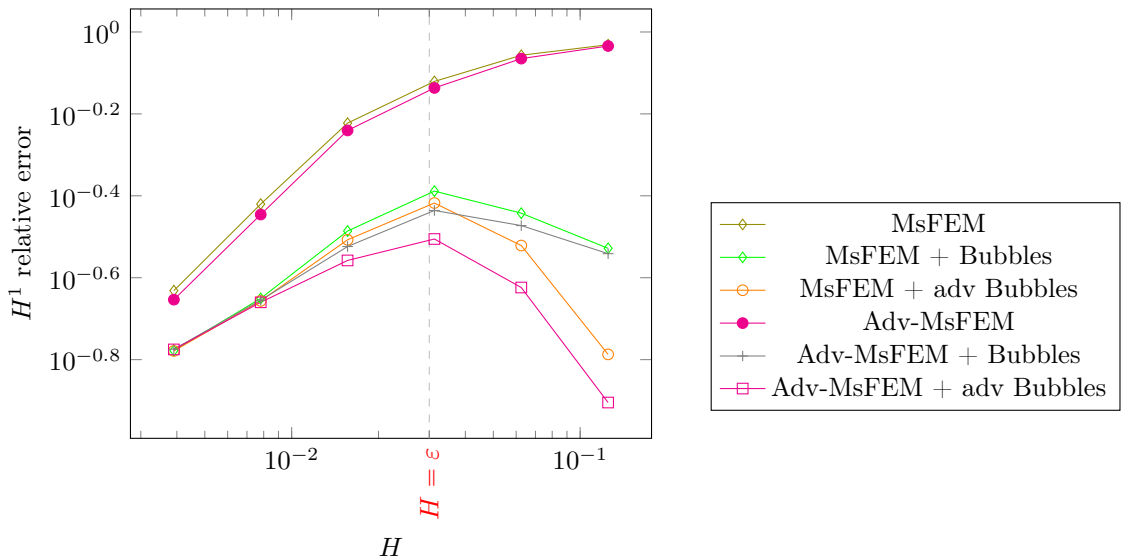

We next focus on Adv-MsFEM, possibly complemented (in view of the conclusions drawn from Fig.~\ref{test_34_H1_mB_CRBs}) by advective bubble functions. 
\emph{Only here in the present article}, we temporarily include in our comparison multiscale basis functions built with boundary conditions other than Crouzeix-Raviart, namely elements with linear boundary conditions and elements using oversampling (specifically with an oversampling ratio equal to 3, see~\cite{efendiev2009multiscale} for the definition). In the case of the Adv-MsFEM CR approach (with Crouzeix-Raviart boundary conditions), the bubble functions are defined by~\eqref{pb_adv_msfem_bubble_dirichlet}, also with Crouzeix-Raviart boundary conditions. In the case of the Adv-MsFEM lin approach (with affine boundary conditions on $\partial K$) and of the Adv-MsFEM OS approach (with oversampling), the bubble functions are defined using homogeneous Dirichlet boundary conditions on $\partial K$, that is as the solution to
$$
\left\{\begin{aligned}
&-\alpha\Delta \Psi^{\eps,K}_{\text{D}}+\widehat{b}^\eps\cdot\nabla \Psi^{\eps,K}_{\text{D}} = 1 \quad\text{in }K\cap\Omega^\eps,\\
&\Psi^{\eps,K}_{\text{D}} = 0 \quad \text{ in }K\cap B^\eps,
\qquad
\Psi^{\eps,K}_{\text{D}} = 0 \quad \text{ on } \partial K.
\end{aligned}\right.
$$
Figure~\ref{test_34_H1_mA_nos} displays the relative $H^1$ broken error of the different approaches. We again observe that adding advective bubble functions significantly improves the accuracy, and that Adv-MsFEM \`a la Crouzeix-Raviart with advective bubble functions is the best of all the approaches considered.

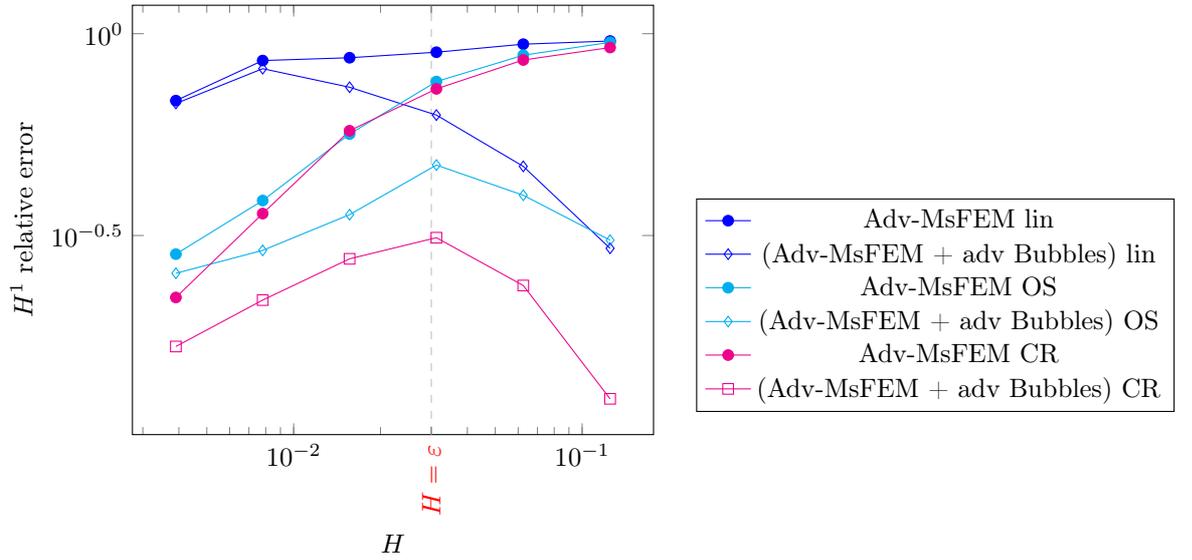
\begin{figure}[h!]
  \begin{center}
\begin{tikzpicture}
\begin{axis}[extra x ticks={0.03} , extra x tick labels={\color{red}$H=\eps$},
extra x tick style={grid=major,red, dashed, tick label style={rotate=90,anchor=east}}, xlabel={$H$},ylabel={$H^1$ relative error},legend entries={
{Adv-MsFEM lin},{(Adv-MsFEM + adv Bubbles) lin},{Adv-MsFEM OS},{(Adv-MsFEM + adv Bubbles) OS},{Adv-MsFEM CR},{(Adv-MsFEM + adv Bubbles) CR}  
},legend style={at={(2.,0.55)}},xmode=log,ymode=log]
\addplot[blue,mark = *] table[x expr=1/\thisrowno{0} ,y index=9]{data_perforation/test_156_err_adv_diff_msfemA-nos.dat};
\addplot[blue,mark = diamond] table[x expr=1/\thisrowno{0} ,y index=9]{data_perforation/test_156_err_adv_diff_msfemA-nosB.dat};
\addplot[cyan,mark = *] table[x expr=1/\thisrowno{0} ,y index=9]{data_perforation/test_156_err_adv_diff_msfemA-OS.dat};
\addplot[cyan,mark = diamond] table[x expr=1/\thisrowno{0} ,y index=9]{data_perforation/test_156_err_adv_diff_msfemA-OSB.dat};
\addplot[magenta,mark = *] table[x expr=1/\thisrowno{0} ,y index=9]{data_perforation/test_156_err_adv_diff_msfemA-CR7.dat};
\addplot[magenta,mark=square] table[x expr=1/\thisrowno{0} ,y index=9]{data_perforation/test_156_err_adv_diff_msfemA-CRB7.dat};
\end{axis}
\end{tikzpicture}
\end{center}
  \caption{[Dirichlet Problem~\eqref{pb_perforation_num}] Addition of bubble functions to Adv-MsFEM and influence of the boundary conditions.
}
\label{test_34_H1_mA_nos}
\end{figure}

\subsubsection{Influence of the P\'eclet number}
\label{section_influence_nombre_peclet}

We now study the influence of a large advection, quantified by the P\'eclet number, on the accuracy of our approaches. We fix $\eps = 0.03125$ and the mesh size $H=1/16$.
We choose $\mathcal{O}= \mathcal{O}_1$, the configuration where the perforations do not intersect the coarse mesh. In order to vary the P\'eclet number, we let the diffusion parameter take the values~$\alpha=2^k$, for the integers $k=-5$ through $2$. When $\alpha$ decreases, Problem~\eqref{pb_perforation_num} increasingly becomes advection-dominated. Given the results of our previous section, we only consider MsFEM and Adv-MsFEM with advective bubble functions (as well as the stabilized formulations of these two approaches), all basis functions satisfying Crouzeix-Raviart boundary conditions. Figure~\ref{test_111_CRB7_H1} shows that the latter approach (with or without stabilization) stays accurate when $\alpha$ decreases while the error blows up to a hundred percent for the former approach (with and without stabilization). 

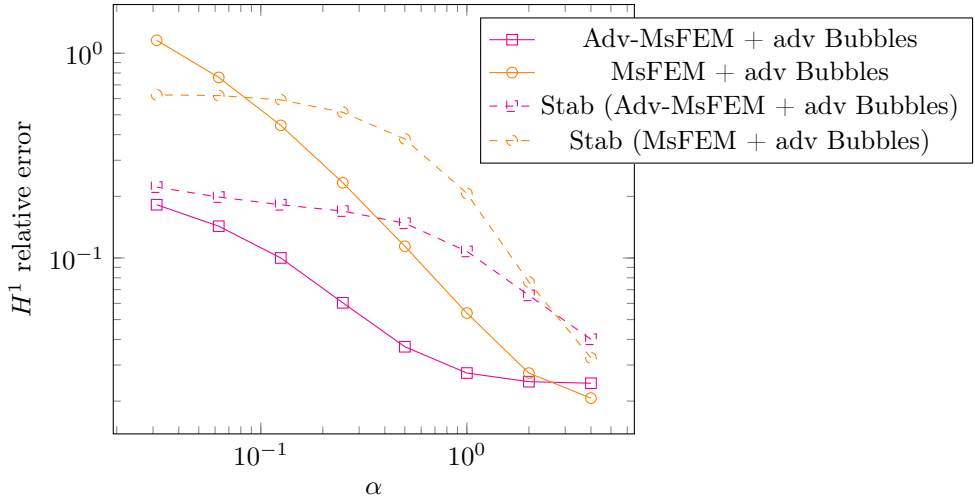
\begin{figure}[h!]
\begin{center}
\begin{tikzpicture}
\begin{axis}[
xlabel={$\alpha$},ylabel={$H^1$ relative error},legend entries={
{Adv-MsFEM + adv Bubbles},{MsFEM + adv Bubbles},{Stab (Adv-MsFEM + adv Bubbles)},{Stab (MsFEM + adv Bubbles)}  
},legend style={at={(1.65,0.97)}},xmode=log,ymode=log]
\addplot[magenta, mark=square] table[x index=0 ,y index=1]{data_perforation/test_111_CRB7_H1};
\addplot[orange, mark=o] table[x index=0 ,y index=15]{data_perforation/test_111_CRB7_H1};
\addplot[magenta, mark=square,dashed] table[x index=0 ,y index=5]{data_perforation/test_111_CRB7_H1};
\addplot[orange, mark=o,dashed] table[x index=0 ,y index=16]{data_perforation/test_111_CRB7_H1};
\end{axis}
\end{tikzpicture}
\end{center}
\caption{[Dirichlet Problem~\eqref{pb_perforation_num}] Sensitivity to the P\'eclet number (all basis functions satisfy CR boundary conditions). 
}
\label{test_111_CRB7_H1}
\end{figure}

We have checked that our conclusions are not modified when considering shifted perforations $\mathcal{O}= \mathcal{O}_2$, many of which now intersect the edges of mesh elements (results not shown).

\subsubsection{Influence of the small scale $\eps$}
\label{section_influence_size_perforations}

We fix $\alpha= 1/16$, $H=1/16$,
and, in order to evaluate the influence of the small scale $\eps$, let $\eps$ take the values~$\eps=2^{-k}$ for $k=3, \dots, 8$. Since we know from the previous observations that stabilization does not bring any added value, we only consider our approaches without stabilization. In Figure~\ref{test_124_CRB7_H1}, we observe that the most accurate method is the Adv-MsFEM with advective bubble functions. 

\begin{figure}[h!]
\begin{center}
\begin{tikzpicture}
\begin{axis}[
extra x ticks={0.0625} , extra x tick labels={\color{red}$H=\eps$},
extra x tick style={grid=major,red, dashed, tick label style={rotate=90,anchor=east}},
 xlabel={$\eps$},ylabel={$H^1$ relative error},legend entries={
{Adv-MsFEM + adv Bubbles},
{MsFEM + adv Bubbles},
},legend style={at={(1.7,0.92)}},xmode=log,ymode=log]
\addplot[magenta, mark=square] table[x index=0 ,y index=1]{data_perforation/test_124_CRB7_H1};
\addplot[orange, mark=o] table[x index=0 ,y index=15]{data_perforation/test_124_CRB7_H1};
\end{axis}
\end{tikzpicture}
\end{center}
\caption{[Dirichlet Problem~\eqref{pb_perforation_num}] Sensitivity to the small scale $\eps$ (all basis functions satisfy CR boundary conditions).
}
\label{test_124_CRB7_H1}
\end{figure}

The results are shown for $\mathcal{O}= \mathcal{O}_1$, in which case the perforations do not intersect the coarse mesh. We have also checked
that the exact same conclusion holds for~$\mathcal{O}= \mathcal{O}_2$ (results not shown).

\subsubsection{Comparison of two variants of Adv-MsFEM}
\label{section_boundary_condition_adv_msfem}

In the previous sections, we have always used the formulation~\eqref{varf_adv_msfem_dirichlet}--\eqref{adv_msfem_approximation_space_dirichlet} (with the bilinear form $c_H$ defined by~\eqref{def_cH}) for the definition of Adv-MsFEM. As briefly mentioned in the introduction and in Remark~\ref{rem:cH_ou_aH}, a formulation such as~\eqref{varf_adv_msfem_neumann}--\eqref{adv_msfem_approximation_space_neumann} (which we introduce and use for the Neumann problem) can also be considered for the present Dirichlet case (up to, evidently, an adequate modification of the variational spaces). The difference between the two approaches is the use of the bilinear form $a_H$ instead of $c_H$ in the definition of the local and global problems, which in particular implies different boundary conditions prescribed on the inner edges/faces of the mesh elements for the basis functions (see Section~\ref{section_adv_msfem_CRB}).

\medskip

We first compare the two methods as in Section~\ref{section_influence_nombre_peclet}, that is for varying P\'eclet numbers (i.e. varying $\alpha$). We first fix $\mathcal{O}= \mathcal{O}_1$, and next fix $\mathcal{O}= \mathcal{O}_2$. In both cases, we have observed (results not shown) that the behavior of the methods~\eqref{varf_adv_msfem_dirichlet} and~\eqref{varf_adv_msfem_neumann} when $\alpha$ decreases is similar.

We then consider again the setting of Section~\ref{section_influence_size_perforations}. We start with $\mathcal{O} = \mathcal{O}_2$. In Figure~\ref{test_125_CRB3_CRB7_H1}, both methods yields a reasonable accuracy for small values of $\eps$. We now set $\mathcal{O}= \mathcal{O}_1$, all the other parameters being unchanged. We see in Figure~\ref{test_124_CRB3_CRB7_H1} that the method~\eqref{varf_adv_msfem_neumann} is much less accurate than the method~\eqref{varf_adv_msfem_dirichlet} for small values of $\eps$. This provides a practical motivation (in addition to the theoretical motivation outlined above) to use the method~\eqref{varf_adv_msfem_dirichlet}.

\begin{figure}[h!]
\begin{center}
\begin{tikzpicture}
\begin{axis}[ymin=0.06,ymax=1.,
extra x ticks={0.0625} , extra x tick labels={\color{red}$H=\eps$},
extra x tick style={grid=major,red, dashed, tick label style={rotate=90,anchor=east}},
 xlabel={$\eps$},ylabel={$H^1$ relative error},legend entries={
{Adv-MsFEM + adv Bubbles: variant~\eqref{varf_adv_msfem_dirichlet}},
{Adv-MsFEM + adv Bubbles: variant~\eqref{varf_adv_msfem_neumann}}
},legend style={at={(1.6,0.9)}},xmode=log,ymode=log]
\addplot[magenta, mark=square] table[x index=0 ,y index=1]{data_perforation/test_125_CRB7_H1};
\addplot[violet, mark=o] table[x index=0 ,y index=7]{data_perforation/test_125_CRB7_H1};
\end{axis}
\end{tikzpicture}
\end{center}
\caption{[Dirichlet Problem~\eqref{pb_perforation_num}] Comparison of the Adv-MsFEM variants when $\mathcal{O} = \mathcal{O}_2$ (all basis functions satisfy CR boundary conditions).
}
\label{test_125_CRB3_CRB7_H1}
\end{figure}

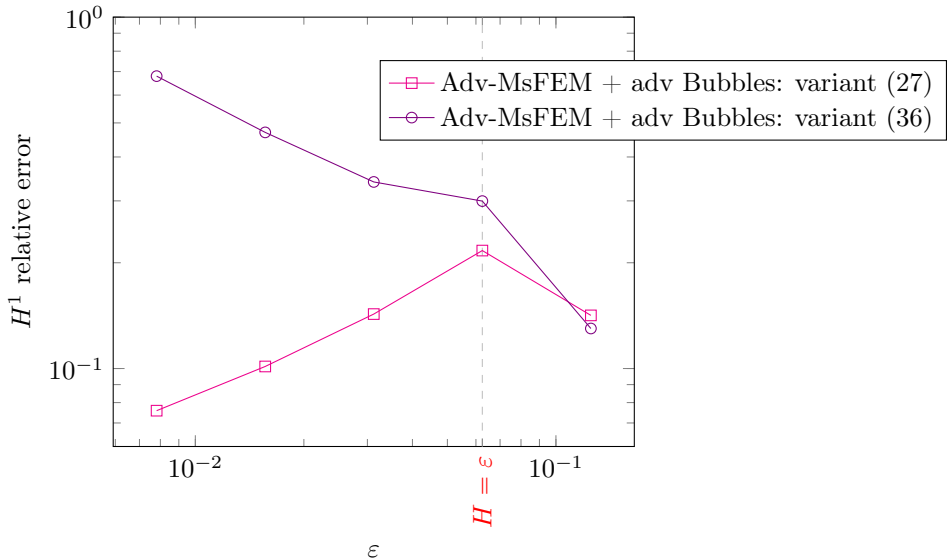
\begin{figure}[h!]
  \begin{center}
\begin{tikzpicture}
\begin{axis}[ymin=0.06,ymax=1.,
extra x ticks={0.0625} , extra x tick labels={\color{red}$H=\eps$},
extra x tick style={grid=major,red, dashed, tick label style={rotate=90,anchor=east}},
 xlabel={$\eps$},ylabel={$H^1$ relative error},legend entries={
{Adv-MsFEM + adv Bubbles: variant~\eqref{varf_adv_msfem_dirichlet}},
{Adv-MsFEM + adv Bubbles: variant~\eqref{varf_adv_msfem_neumann}}
},legend style={at={(1.6,0.9)}},xmode=log,ymode=log]
\addplot[magenta, mark=square] table[x index=0 ,y index=1]{data_perforation/test_124_CRB7_H1};
\addplot[violet, mark=o] table[x index=0 ,y index=7]{data_perforation/test_124_CRB7_H1};
\end{axis}
\end{tikzpicture}
\end{center}
  \caption{[Dirichlet Problem~\eqref{pb_perforation_num}] Comparison of the Adv-MsFEM variants when $\mathcal{O}= \mathcal{O}_1$ (all basis functions satisfy CR boundary conditions).
}
\label{test_124_CRB3_CRB7_H1}
\end{figure}

\subsection{Homogeneous Neumann boundary condition}
\label{section_numerical_neumann}

This section presents our numerical tests and conclusions for problem~\eqref{perf_neumann_pb_perforated}, in the same vein as Section~\ref{section_numerical_dirichlet} above presented those for problem~\eqref{pb_perforation_num}. As announced in the introduction, the method of preference is again here a classical, non-stabilized approach using a basis of functions built upon the full advection-diffusion operator enriched with bubble functions built likewise (namely, the ``Adv-MsFEM + adv Bubbles'' approach). As in Section~\ref{section_numerical_dirichlet}, if, for some reason, one does not wish to include the tranport field in the definition of the basis functions, then there is an alternate possibility. The best to do is to use a stabilized formulation with basis functions built with the sole diffusive part of the operator (that is, the ``Stab-MsFEM'' approach). All other approaches turn out to be significantly less efficient.

\medskip

We proceed similarly to Section~\ref{section_numerical_dirichlet}. Sections~\ref{section_neumann_influence_nombre_peclet} and~\ref{section_neumann_influence_size_perforations} respectively investigate the influence of the P\'eclet number and of the small scale $\eps$. In Section~\ref{section_neumann_ajout_fonctions_bulles}, we study the effect of adding bubble functions.

We consider the Neumann problem~\eqref{perf_neumann_pb_perforated} for a constant advection field $\widehat{b}^\eps$, namely $\widehat{b}^\eps=(1,1)^T$. As expressed by Theorem~\ref{perf_neumann_theorem_b_osc_gene}, the homogenized problem is an advection-dominated problem posed in $\Omega$. In contrast to the situation with homogeneous Dirichlet boundary conditions, the flow is not slowed down by the boundary conditions set on the boundary of perforations. It however has to comply with the Dirichlet boundary conditions on the outer boundary of the domain $\Omega$. Given the orientation of $\widehat{b}^\eps$, a boundary layer is expected close to the upper right corner of $\Omega$. We denote by~$\Omega_{\text{layer}} = \Big( (0,1)\times(1-\delta_{\text{layer}},1)\Big) \cup \Big((1-\delta_{\text{layer}},1) \times (0,1)\Big)$ this expected boundary layer, of approximate width $\displaystyle \delta_{\text{layer}}=\frac{1}{\text{Pe}}\log(\text{Pe})$, with $\dis \text{Pe} = \left\| \widehat{b}^\eps \right\|_{L^\infty(\Omega^\eps)}/(2\alpha)$.

\medskip

We first consider methods without bubble functions, and only consider the addition of bubble functions in Section~\ref{section_neumann_ajout_fonctions_bulles}.

\subsubsection{Influence of the P\'eclet number}
\label{section_neumann_influence_nombre_peclet}

As already mentioned, one consequence of the Neumann conditions (as opposed to the homogeneous Dirichlet conditions) set on the boundary of the perforations is that the flow is not slowed down around the perforations. Thus, when advection dominates diffusion, the effect of advection is all the more acute. Since advection is more extreme, it is therefore important to primarily investigate how the approaches perform on Problem~\eqref{perf_neumann_pb_perforated} when advection increasingly dominates diffusion (a study we presented in Section~\ref{section_influence_nombre_peclet} for the Dirichlet problem). In practice, we perform our tests fixing $\eps = 0.03125$, $H=1/16$
and varying $\alpha=2^k$, for integers $k=-9$ to $-2$.

\medskip

It is well known that all discretization methods poorly perform within the boundary layer in the advection dominated regime. The only exceptions are methods specifically tailored to the boundary layer and we do not wish to go in that direction. We have checked
that all our approaches essentially fail in the boundary layer, the error for some of them even blowing up to more than a hundred percent. Therefore, in order to discriminate between the approaches, we only consider the region outside the boundary layer (we have also adopted such a strategy in~\cite{lebris2015numerical}). Figure~\ref{test_145_CRB3_H1out} shows the relative error~\eqref{def-relative-error} (for $\omega=\Omega^\eps \setminus \Omega_{\text{layer}}$ in~\eqref{def-broken-norm}) calculated there, in the configuration where the perforations do not intersect the coarse mesh, i.e. when $\mathcal{O}= \mathcal{O}_1$. We observe that Adv-MsFEM performs well. As is the case for MsFEM, provided it is stabilized. Figure~\ref{test_146_CRB3_H1out} shows the results of the same tests for $\mathcal{O} = \mathcal{O}_2$. It confirms the same conclusions, qualitatively, and therefore the flexibility of our approaches all based upon Crouzeix-Raviart type boundary conditions. 

\begin{figure}[h!]
\begin{center}
\begin{tikzpicture}
\begin{axis}[
xlabel={$\alpha$},ylabel={$H^1$ relative error outside the boundary layer},legend entries={
  {Adv-MsFEM},
  {MsFEM},
{Stab (MsFEM)}
},legend style={at={(1.6,0.65)}},xmode=log,ymode=log]
\addplot[lime, mark=square] table[x index=0 ,y index=7]{data_perforation/test_145_CRB3_H1out};
\addplot[olive, mark=diamond] table[x index=0 ,y index=8]{data_perforation/test_145_CRB3_H1out};
\addplot[teal, dashed, mark=diamond] table[x index=0 ,y index=9]{data_perforation/test_145_CRB3_H1out};
\end{axis}
\end{tikzpicture}
\end{center}
\caption{[Neumann Problem~\eqref{perf_neumann_pb_perforated}] Sensitivity to the P\'eclet number: error outside the boundary layer when $\mathcal{O}= \mathcal{O}_1$ (all basis functions satisfy CR boundary conditions).
}
\label{test_145_CRB3_H1out}
\end{figure}

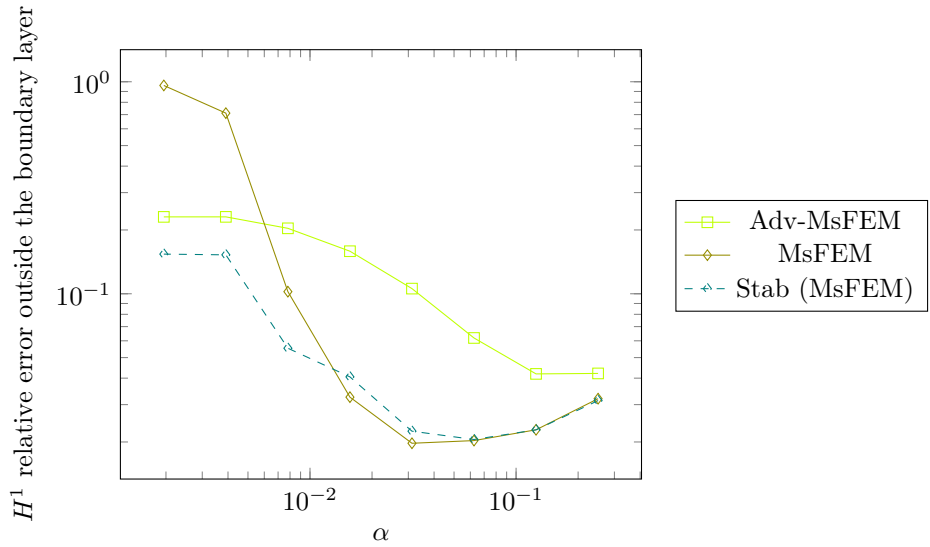
\begin{figure}[h!]
\begin{center}
\begin{tikzpicture}
\begin{axis}[
xlabel={$\alpha$},ylabel={$H^1$ relative error outside the boundary layer},legend entries={
  {Adv-MsFEM},
  {MsFEM},
{Stab (MsFEM)}
},legend style={at={(1.55,0.65)}},xmode=log,ymode=log]
\addplot[lime, mark=square] table[x index=0 ,y index=7]{data_perforation/test_146_CRB3_H1out};
\addplot[olive, mark=diamond] table[x index=0 ,y index=8]{data_perforation/test_146_CRB3_H1out};
\addplot[teal, dashed, mark=diamond] table[x index=0 ,y index=9]{data_perforation/test_146_CRB3_H1out};
\end{axis}
\end{tikzpicture}
\end{center}
\caption{[Neumann Problem~\eqref{perf_neumann_pb_perforated}] Sensitivity to the P\'eclet number: error outside the boundary layer when $\mathcal{O}= \mathcal{O}_2$ (all basis functions satisfy CR boundary conditions).
}
\label{test_146_CRB3_H1out}
\end{figure}

\subsubsection{Influence of the small scale $\eps$}
\label{section_neumann_influence_size_perforations}

We fix $\alpha= 1/256$, $H=1/16$
and we vary $\eps=2^{-k}$, $k=5,\dots, 8$. We only show here the results when the perforations do not intersect the coarse mesh, i.e. when $\mathcal{O}= \mathcal{O}_1$. The results for $\mathcal{O}= \mathcal{O}_2$ are similar (results not shown).

Figures~\ref{test_153_CRB3_H1} and~\ref{test_153_CRB3_H1out} both show that the relative error, respectively throughout the domain and outside the boundary layer, is essentially insensitive to the small scale $\eps$. The comparison of the actual size of the error in each of the two figures shows that the error within the boundary layer significantly dominates that outside the layer and is often prohibitively large, as is usually the case in the advection-dominated regime and as was mentioned in the previous section. In both figures, we observe that MsFEM is outperformed. Overall, Adv-MsFEM performs the best, but Stab-MsFEM is the most accurate method outside the boundary layer.

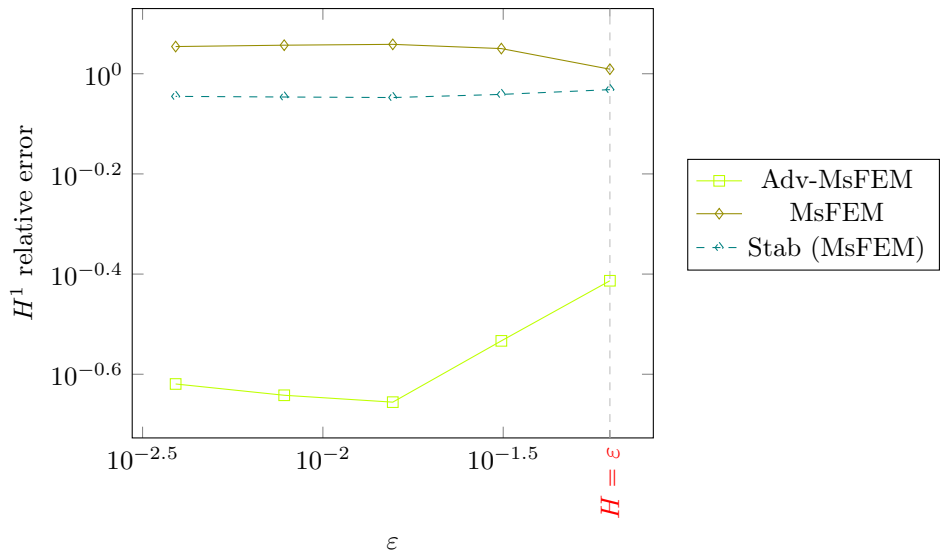
\begin{figure}[h!]
\begin{center}
\begin{tikzpicture}
\begin{axis}[
extra x ticks={0.0625} , extra x tick labels={\color{red}$H=\eps$},
extra x tick style={grid=major,red, dashed, tick label style={rotate=90,anchor=east}},
xlabel={$\eps$},ylabel={$H^1$ relative error},legend entries={
  {Adv-MsFEM},
  {MsFEM},
{Stab (MsFEM)}
},legend style={at={(1.55,0.65)}},xmode=log,ymode=log]
\addplot[lime, mark=square] table[x index=0 ,y index=7]{data_perforation/test_153_CRB3_H1};
\addplot[olive, mark=diamond] table[x index=0 ,y index=8]{data_perforation/test_153_CRB3_H1};
\addplot[teal, dashed, mark=diamond] table[x index=0 ,y index=9]{data_perforation/test_153_CRB3_H1};
\end{axis}
\end{tikzpicture}
\end{center}
\caption{[Neumann Problem~\eqref{perf_neumann_pb_perforated}] Sensitivity to the small scale $\eps$: error in the whole domain (all basis functions satisfy CR boundary conditions).
}
\label{test_153_CRB3_H1}
\end{figure}

\begin{figure}[h!]
\begin{center}
\begin{tikzpicture}
\begin{axis}[
extra x ticks={0.0625} , extra x tick labels={\color{red}$H=\eps$},
extra x tick style={grid=major,red, dashed, tick label style={rotate=90,anchor=east}},
xlabel={$\eps$},ylabel={$H^1$ relative error outside the boundary layer},legend entries={
  {Adv-MsFEM},
  {MsFEM},
{Stab (MsFEM)}
},legend style={at={(1.55,0.65)}},xmode=log,ymode=log]
\addplot[lime, mark=square] table[x index=0 ,y index=7]{data_perforation/test_153_CRB3_H1out};
\addplot[olive, mark=diamond] table[x index=0 ,y index=8]{data_perforation/test_153_CRB3_H1out};
\addplot[teal, dashed, mark=diamond] table[x index=0 ,y index=9]{data_perforation/test_153_CRB3_H1out};
\end{axis}
\end{tikzpicture}
\end{center}
\caption{[Neumann Problem~\eqref{perf_neumann_pb_perforated}] Sensitivity to the small scale $\eps$: error outside the boundary layer (all basis functions satisfy CR boundary conditions).
}
\label{test_153_CRB3_H1out}
\end{figure}
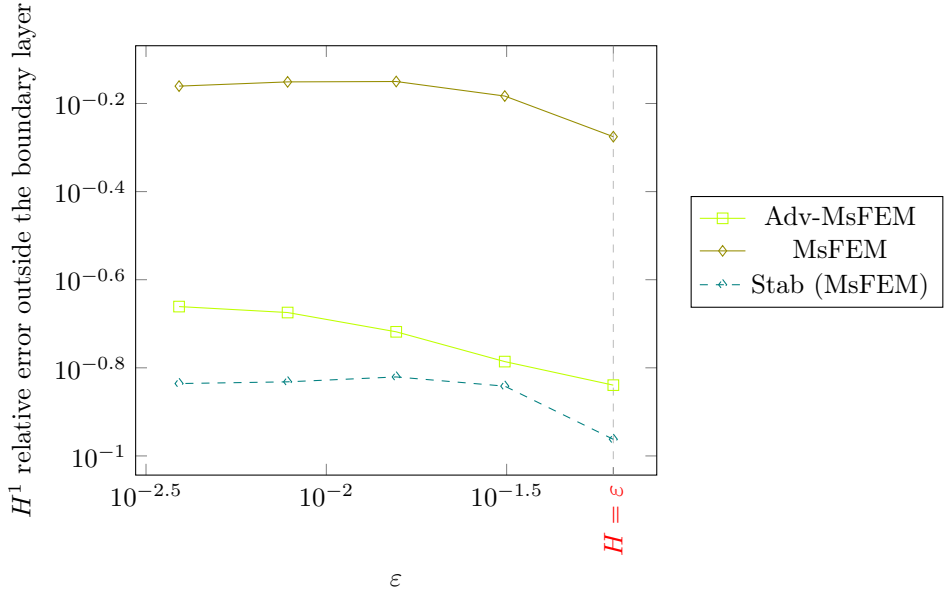

\subsubsection{Adding bubble functions}
\label{section_neumann_ajout_fonctions_bulles}

In this section, we study the added value of bubble functions for Adv-MsFEM and, given the conclusions of the previous sections that show the inaccuracy of MsFEM itself, for the stabilized variant Stab-MsFEM.

\paragraph{Adv-MsFEM with advective bubble functions}

We consider the test cases of Section~\ref{section_neumann_influence_nombre_peclet}. Figure~\ref{test_145_CRB3_H1out_adv_msfem_bubble} displays the relative $H^1$ broken error of our different approaches outside the boundary layer, when $\mathcal{O} = \mathcal{O}_1$. The case $\mathcal{O} = \mathcal{O}_2$ is shown on Figure~\ref{test_146_CRB3_H1out_adv_msfem_bubble}. We observe that the Adv-MsFEM with advective bubble functions outperforms the Adv-MsFEM and the Stab-MsFEM (without bubble functions). It in fact also gives reasonable results in the whole domain (results not shown).

\begin{figure}[h!]
\begin{center}
\begin{tikzpicture}
\begin{axis}[
xlabel={$\alpha$},ylabel={$H^1$ relative error outside the boundary layer},legend entries={
{Adv-MsFEM + adv Bubbles},
{Adv-MsFEM},
{Stab (MsFEM)}
},legend style={at={(1.6,0.95)}},xmode=log,ymode=log]
\addplot[violet, mark=o] table[x index=0 ,y index=1]{data_perforation/test_145_CRB3_H1out};
\addplot[lime, mark=square] table[x index=0 ,y index=7]{data_perforation/test_145_CRB3_H1out};
\addplot[teal, dashed, mark=diamond] table[x index=0 ,y index=9]{data_perforation/test_145_CRB3_H1out};
\end{axis}
\end{tikzpicture}
\end{center}
\caption{[Neumann Problem~\eqref{perf_neumann_pb_perforated}] Adding bubble functions: error outside the boundary layer when $\mathcal{O}= \mathcal{O}_1$ (all basis functions satisfy CR boundary conditions).
}
\label{test_145_CRB3_H1out_adv_msfem_bubble}
\end{figure}

\begin{figure}[h!]
\begin{center}
\begin{tikzpicture}
\begin{axis}[
xlabel={$\alpha$},ylabel={$H^1$ relative error outside the boundary layer},legend entries={
{Adv-MsFEM + adv Bubbles},
{Adv-MsFEM},
{Stab (MsFEM)}
},legend style={at={(1.6,0.95)}},xmode=log,ymode=log]
\addplot[violet, mark=o] table[x index=0 ,y index=1]{data_perforation/test_146_CRB3_H1out};
\addplot[lime, mark=square] table[x index=0 ,y index=7]{data_perforation/test_146_CRB3_H1out};
\addplot[teal, dashed, mark=diamond] table[x index=0 ,y index=9]{data_perforation/test_146_CRB3_H1out};
\end{axis}
\end{tikzpicture}
\end{center}
\caption{[Neumann Problem~\eqref{perf_neumann_pb_perforated}] Adding bubble functions: error outside the boundary layer when $\mathcal{O} = \mathcal{O}_2$ (all basis functions satisfy CR boundary conditions).
}
\label{test_146_CRB3_H1out_adv_msfem_bubble}
\end{figure}
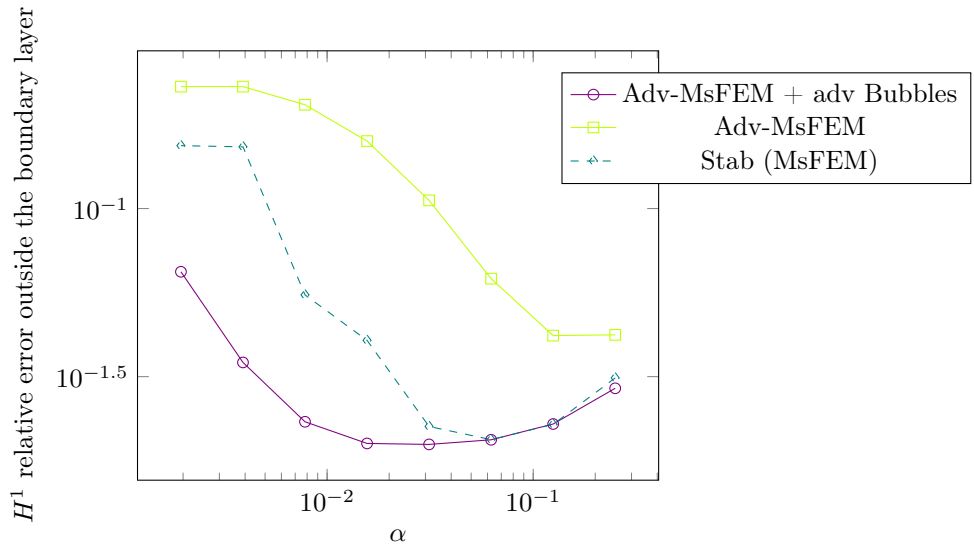

\medskip

We next turn to the test cases of Section~\ref{section_neumann_influence_size_perforations}. In Figure~\ref{test_153_CRB3_H1_adv_msfem_bubble}, we observe, for $\mathcal{O} = \mathcal{O}_1$, that the Adv-MsFEM with advective bubble functions yields a reasonable accuracy. Choosing next $\mathcal{O} = \mathcal{O}_2$, we see on Figure~\ref{test_154_CRB3_H1_adv_msfem_bubble} the relative $H^1$ broken error of the Adv-MsFEM with advective bubble functions inside and outside the boundary layer. Comparing Figures~\ref{test_153_CRB3_H1_adv_msfem_bubble} and~\ref{test_154_CRB3_H1_adv_msfem_bubble}, we infer that:
\begin{itemize}
\item inside the boundary layer, the Adv-MsFEM with advective bubble functions is sensitive to the location of the perforations with respect to the coarse mesh;
\item outside the boundary layer, the Adv-MsFEM with advective bubble functions is robust to the location of the perforations with respect to the coarse mesh.
\end{itemize}

\begin{figure}[h!]
\begin{center}
\begin{tikzpicture}
\begin{axis}[
extra x ticks={0.0625} , extra x tick labels={\color{red}$H=\eps$},
extra x tick style={grid=major,red, dashed, tick label style={rotate=90,anchor=east}},
xlabel={$\eps$},ylabel={$H^1$ relative error},legend entries={
{$e_{H^1_{\text{in}}}$},{$e_{H^1_{\text{out}}}$},
},legend style={at={(1.35,0.45)}},xmode=log,ymode=log]
\addplot[violet, mark=o] table[x index=0 ,y index=1]{data_perforation/test_153_CRB3_H1in};
\addplot[violet, dashed, mark=o] table[x index=0 ,y index=1]{data_perforation/test_153_CRB3_H1out};
\end{axis}
\end{tikzpicture}
\end{center}
\caption{[Neumann Problem~\eqref{perf_neumann_pb_perforated}] Sensitivity to the small scale $\eps$: Adv-MsFEM with advective bubble functions (all basis functions satisfy CR boundary conditions; $\mathcal{O}= \mathcal{O}_1$).
}
\label{test_153_CRB3_H1_adv_msfem_bubble}
\end{figure}

\begin{figure}[h!]
\begin{center}
\begin{tikzpicture}
\begin{axis}[
extra x ticks={0.0625} , extra x tick labels={\color{red}$H=\eps$},
extra x tick style={grid=major,red, dashed, tick label style={rotate=90,anchor=east}},
xlabel={$\eps$},ylabel={$H^1$ relative error},legend entries={
{$e_{H^1_{\text{in}}}$},{$e_{H^1_{\text{out}}}$},
},legend style={at={(1.35,0.45)}},xmode=log,ymode=log]
\addplot[violet, mark=o] table[x index=0 ,y index=1]{data_perforation/test_154_CRB3_H1in};
\addplot[violet, dashed, mark=o] table[x index=0 ,y index=1]{data_perforation/test_154_CRB3_H1out};
\end{axis}
\end{tikzpicture}
\end{center}
\caption{[Neumann Problem~\eqref{perf_neumann_pb_perforated}] Sensitivity to the small scale $\eps$: Adv-MsFEM with advective bubble functions (all basis functions satisfy CR boundary conditions; $\mathcal{O}= \mathcal{O}_2$).
}
\label{test_154_CRB3_H1_adv_msfem_bubble}
\end{figure}
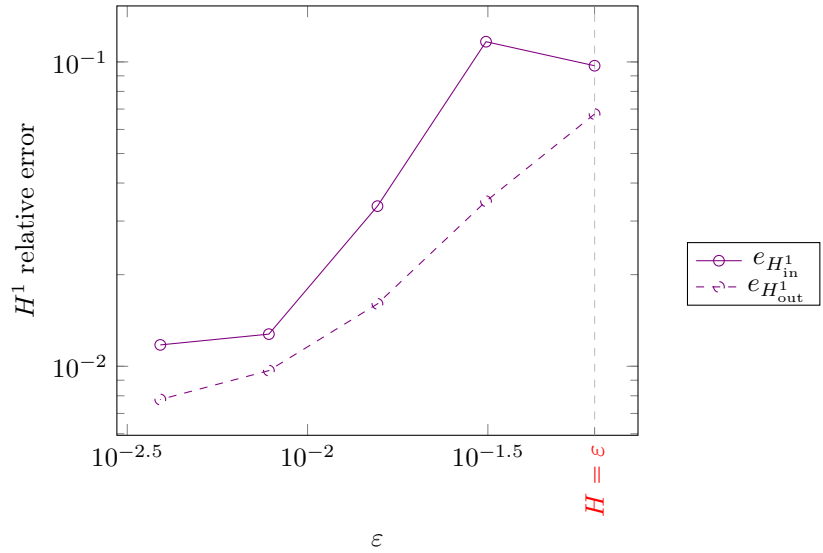

\paragraph{Stab-MsFEM with bubble functions}

We consider the test cases of Section~\ref{section_neumann_influence_nombre_peclet} in the case $\mathcal{O} = \mathcal{O}_1$. Figure~\ref{test_145_CRB3_H1out_msfem_bubble} shows the error outside the boundary layer. We observe that adding bubble functions (either computed with the diffusive part of the operator or the full advection-diffusion operator) does not improve the accuracy of Stab-MsFEM (it may even degrade it).
The results for $\mathcal{O} = \mathcal{O}_2$ are similar (results not shown).

\begin{figure}[htbp]
\begin{center}
\begin{tikzpicture}
\begin{axis}[
xlabel={$\alpha$},ylabel={$H^1$ relative error outside the boundary layer},legend entries={
{Stab (MsFEM + Bubbles)},
{Stab (MsFEM)},
{Stab (MsFEM + adv Bubbles)}, 
{Adv-MsFEM + adv Bubbles}
},legend style={at={(1.5,0.9)}},xmode=log,ymode=log]
\addplot[black, dashed, mark=diamond] table[x index=0 ,y index=3]{data_perforation/test_145_CRB3_H1out};
\addplot[teal, dashed, mark=diamond] table[x index=0 ,y index=9]{data_perforation/test_145_CRB3_H1out};
\addplot[black, mark=diamond] table[x index=0 ,y index=6]{data_perforation/test_145_CRB3_H1out}; 
\addplot[violet, mark=o] table[x index=0 ,y index=1]{data_perforation/test_145_CRB3_H1out};
\end{axis}
\end{tikzpicture}
\end{center}
\caption{[Neumann Problem~\eqref{perf_neumann_pb_perforated}] Adding bubbles to Stab-MsFEM: error outside the boundary layer (all basis functions satisfy CR boundary conditions; $\mathcal{O}= \mathcal{O}_1$).
}
\label{test_145_CRB3_H1out_msfem_bubble}
\end{figure}
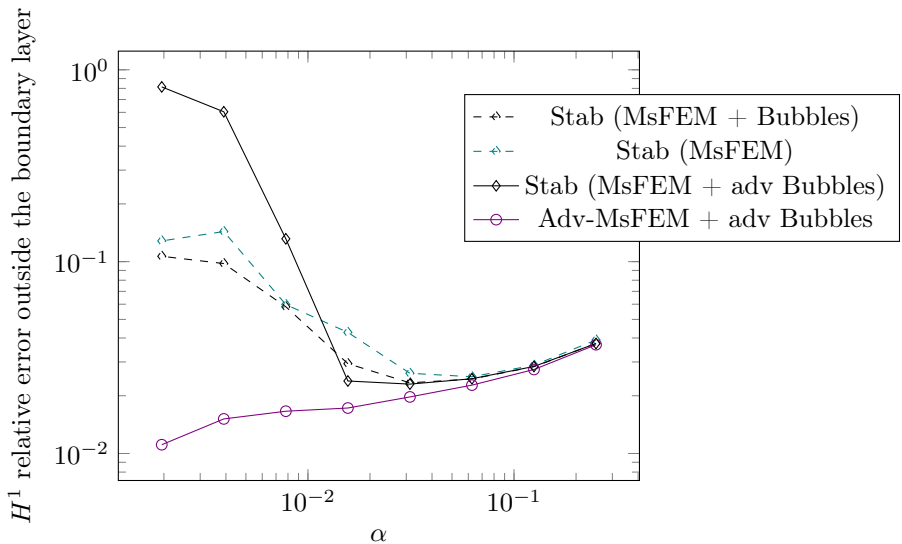

\subsection{A non-periodic case}
\label{sec:num-non-per}

A major motivation for using MsFEM approaches is to address non-periodic cases, for which homogenization theory does not provide any explicit approximation strategy. In this section, we assess the performance of our approaches on the non-periodic geometry $\Omega^\eps_{\text{np}}$ depicted in Figure~\ref{test_19} (we again assume homogeneous Neumann boundary conditions on the perforations). 

With the aim to investigate the robustness of our approaches with respect to the geometry of the perforations, we compare the results obtained in this non-periodic case with those obtained for a periodically perforated domain $\Omega_{\text{p}}^\eps$ defined by~\eqref{def_Omega_eps_periodic} with $\eps=0.03125$, $Y=(0,1)^2$ and $\mathcal{O}=r\mathcal{O}_1$ where $r>0$ is such that $|\Omega_{\text{p}}^\eps|=|\Omega_{\text{np}}^\eps|$: the size of the small scale and the amount of perforations is thus identical for the two problems.

We perform a test similar to the one described in Section~\ref{section_neumann_influence_nombre_peclet}, where we study the influence of the P\'eclet number. We recall that we fix $\eps = 0.03125$, $H=1/16$
and we vary $\alpha=2^k$, for integers $k=-9$ to $-2$. 
Figure~\ref{test_164_165_CRB3_H1out} displays the relative $H^1$ broken error outside the boundary layer of our most accurate approaches, namely the Adv-MsFEM with advective bubble functions and the Stab-MsFEM. We observe that the Stab-MsFEM is insensitive to the non-periodicity of the geometry. The Adv-MsFEM with advective bubble functions is more sensitive to the non-periodicity of the geometry but still outperforms the Stab-MsFEM in both cases.

\begin{figure}
\begin{center}
\includegraphics[scale=0.7]{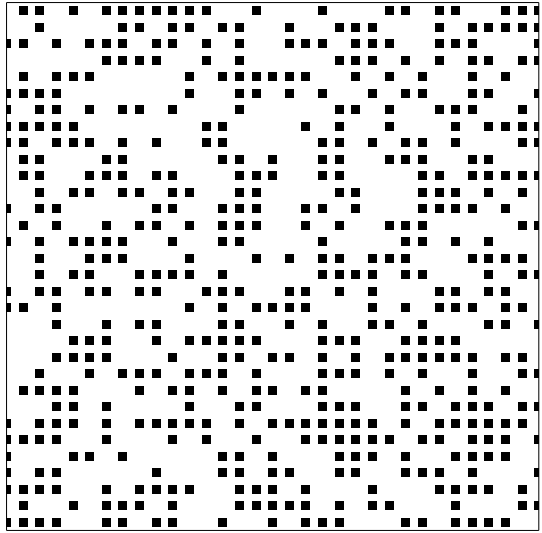}
\end{center}
\caption{Non-periodic geometry. Let $M_\eps$ be the set of perforations obtained by periodically perforating the domain $\Omega = (0,1)^2$ by the motif $\mathcal{O}_2$ (we again denote $Y=(0,1)^2$ the periodic cell and set $\eps=0.03125$). Each of these perforations is next removed with a probability $1/2$, independently of all the other ones.}
\label{test_19}
\end{figure}

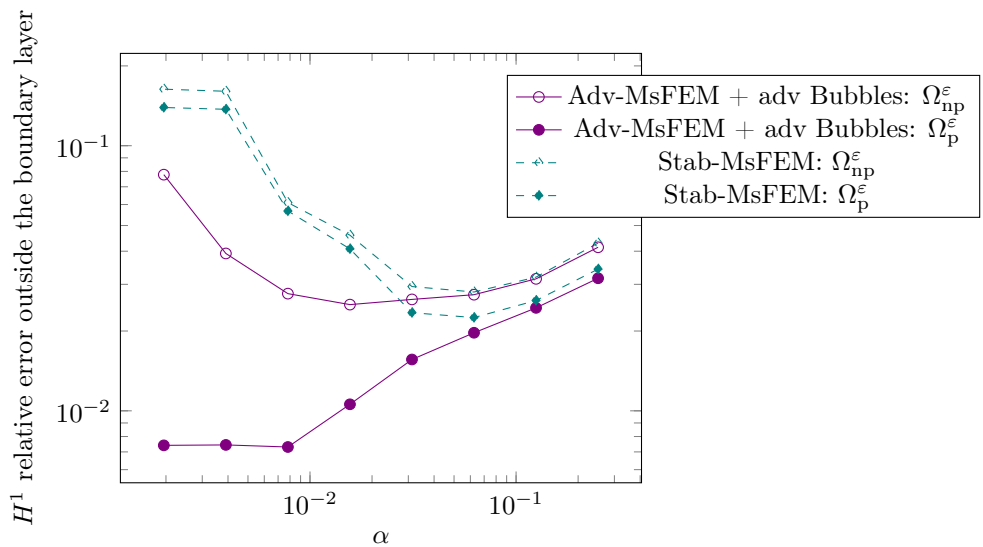
\begin{figure}[h!]
\begin{center}
\begin{tikzpicture}
\begin{axis}[
 xlabel={$\alpha$},ylabel={$H^1$ relative error outside the boundary layer},legend entries={
{Adv-MsFEM + adv Bubbles: $\Omega_{\text{np}}^\eps$ },
{Adv-MsFEM + adv Bubbles: $\Omega_{\text{p}}^\eps$ },
{Stab-MsFEM: $\Omega_{\text{np}}^\eps$},
{Stab-MsFEM: $\Omega_{\text{p}}^\eps$},
},legend style={at={(1.65,0.95)}},xmode=log,ymode=log]
\addplot[violet, mark=o] table[x index=0 ,y index=1]{images_cas_non_periodique/data_perforation/test_164_CRB3_H1out};
\addplot[violet, mark=*] table[x index=0 ,y index=1]{images_cas_non_periodique/data_perforation/test_165_CRB3_H1out};
\addplot[teal, dashed, mark=diamond] table[x index=0 ,y index=9]{images_cas_non_periodique/data_perforation/test_164_CRB3_H1out};
\addplot[teal, dashed, mark=diamond*] table[x index=0 ,y index=9]{images_cas_non_periodique/data_perforation/test_165_CRB3_H1out};
\end{axis}
\end{tikzpicture}
\end{center}
\caption{[Neumann Problem~\eqref{perf_neumann_pb_perforated}] Sensitivity of the approaches to the non-periodicity of the geometry (all basis functions satisfy CR boundary conditions).
}
\label{test_164_165_CRB3_H1out}
\end{figure}

\section*{Acknowledgments}

The work of the authors is partially supported by the ONR under grant N00014-15-1-2777 and the EOARD under grant FA8655-13-1-3061.

\newpage

\appendix

\section{Homogenization results}
\label{section_homogenization_results}

We include here the proof of the homogenization limit for some of the problems we consider. 

\subsection{Homogeneous Dirichlet boundary condition}
\label{section_homogeneous_dirichlet_bc}

We prove here Theorem~\ref{theorem_dirichlet_perf}. The proof of Theorem~\ref{theorem-sans-adv} follows the same pattern and we therefore omit it. A key ingredient in the proof below is the following Poincar\'e inequality (see~\cite[Appendix~A.1]{lebris2014msfem}): there exists $C>0$ independent of $\eps$ such that
\begin{equation}
\forall \phi\in H^1_0(\Omega^\eps), \quad \| \phi \|_{L^2(\Omega^\eps)} \leq C\eps\|\nabla\phi\|_{L^2(\Omega^\eps)} = C\eps | \phi |_{H^1(\Omega^\eps)},
\label{ineq_13}
\end{equation}
where we recall the notation $| v |_{H^1(\Omega^\eps)} = \| \nabla v \|_{L^2(\Omega^\eps)}$ for any $v \in H^1(\Omega^\eps)$.

\begin{proof}[Proof of Theorem~\ref{theorem_dirichlet_perf}]
We adapt the proof of~\cite[Appendix~A.2]{lebris2014msfem}, where we considered a purely diffusive problem. We first prove that Problem~\eqref{pb_corrector_perforated} is well-posed. Consider
$$
V = \left\{ w \in H^1_{\rm loc}({\cal P}), \quad \text{$w$ is $Y$-periodic}, \quad \text{$w=0$ on $\partial \mathcal{O}$} \right\},
$$
where ${\cal P}$ is defined by~\eqref{eq:def_P}. The variational formulation of~\eqref{pb_corrector_perforated} reads as: find $w \in V$ such that
$$
\forall v \in V, \quad a(w,v) = \int_{Y \setminus\overline{\mathcal{O}}} v
$$
with 
$$
a(w,v) = \int_{Y \setminus\overline{\mathcal{O}}} \alpha \nabla w \cdot \nabla v + \int_{Y \setminus\overline{\mathcal{O}}} (b \cdot \nabla w) \, v.
$$
The bilinear form $a$ is well-defined on $V \times V$. Recall indeed that
\begin{equation}
  \label{eq:msp2}
  b \in (W^{1,p}(Y \setminus\overline{\mathcal{O}}))^d \subset (C^0(Y \setminus\overline{\mathcal{O}}))^d
\end{equation}
since $p>d$. Furthermore, $a$ is coercive on $V$. Indeed, for any $v \in V$, we compute that
\begin{align*}
a(v,v)&=\int_{Y \setminus\overline{\mathcal{O}}} \alpha|\nabla v|^2 +\int_{Y \setminus\overline{\mathcal{O}}} (b\cdot \nabla v) \, v
\\
&=\int_{Y \setminus\overline{\mathcal{O}}} \alpha|\nabla v|^2 +\int_{Y \setminus\overline{\mathcal{O}}} b\cdot \nabla \left(\frac{v^2}{2}\right)
\\
&=\int_{Y \setminus\overline{\mathcal{O}}} \alpha|\nabla v|^2 -\int_{Y \setminus\overline{\mathcal{O}}} (\text{div} \, b) \left(\frac{v^2}{2}\right),
\end{align*}
where we have used the periodicity of $v$ and $b$ and the fact that $v=0$ on $\partial \mathcal{O}$ in the integration by part. Note that the regularity of $b$ (namely here $\text{div} \, b \in L^p(Y \setminus\overline{\mathcal{O}})$ with $p>d$) and Sobolev embeddings ensures that the last term in the above equality is well-defined.

Using now that $\text{div} \, b\leq 0$ in $Y \setminus \overline{\mathcal{O}}$ and a Poincar\'e inequality for functions in $V$, we get that, for any $v \in V$,
$$
a(v,v)
\geq
\alpha\|\nabla v\|^2_{L^2(Y \setminus\overline{\mathcal{O}})}
\geq
C \| v \|^2_{H^1(Y \setminus\overline{\mathcal{O}})},
$$
and thus the coercivity of $a$ on $V$. The solution $w$ of~\eqref{pb_corrector_perforated} is thus well defined.

Since the perforations are isolated, we can consider the cell problem on a smooth bounded open domain $Y^+ \setminus \overline{{\cal O}}$ such that $\overline{Y} \subset Y^+$ and $Y^+ \cap \mathcal{O}^\star = \mathcal{O}$, where $\mathcal{O}^\star$ is the domain obtained by $Y$-periodicity from $\mathcal{O}$. Using standard elliptic regularity results (see e.g.~\cite{gilbarg2001elliptic}) and the fact that $b \in (L^\infty(Y \setminus\overline{\mathcal{O}}))^d$, we get that $w \in W^{2,q}(Y \setminus\overline{\mathcal{O}})$ for any finite $q$. This implies that
\begin{equation}
  \label{eq:regul_w_dir}
w \in C^1(\overline{Y \setminus \mathcal{O}}).
\end{equation}

Using similar arguments, we observe that~\eqref{pb_perforation_num} is well-posed for any $f \in L^2(\Omega)$. 

\medskip

We now prove~\eqref{estimation_u}. Let $\eta^\eps$ be a regular function, vanishing in the neighborhood of the boundary of $\Omega$, such that $0\leq \eta^\eps\leq 1$ on $\overline{\Omega}$, and which is equal to 1 on $\{x\in\Omega, \ \ \text{dist }(x,\partial\Omega)>\eps\}$ (see Figure~\ref{fig:perforation}). Since the domain $\Omega$ is regular, we can construct $\eta^\eps$ such that it satisfies
\begin{align*}
&\|\eta^\eps\|_{L^\infty(\Omega)}\leq 1,
\qquad
\|1-\eta^\eps\|_{L^2(\Omega)}\leq C\sqrt{\eps},
\\
&\|\nabla \eta^\eps\|_{L^\infty(\Omega)}\leq\frac{C}{\eps},
\qquad
\|\nabla \eta^\eps\|_{L^2(\Omega)}\leq \frac{C}{\sqrt{\eps}},
\qquad
\|\nabla^2 \eta^\eps\|_{L^2(\Omega)}\leq \frac{C}{\eps^{3/2}},
\end{align*}
where $C>0$ is a constant independent of $\eps$. We define $\dis \phi^\eps = u^\eps - \eps^2 w\left(\frac{\cdot}{\eps}\right) f \, \eta^\eps$ and compute
\begin{align*}
&\nabla\phi^\eps = \nabla u^\eps - \eps \nabla w\left(\frac{\cdot}{\eps}\right) f \eta^\eps -\eps^2 w\left(\frac{\cdot}{\eps}\right) \nabla (f\eta^\eps),
\\
&\Delta \phi^\eps = \Delta u^\eps - \Delta w\left(\frac{\cdot}{\eps}\right) f\eta^\eps - 2 \eps \nabla w\left(\frac{\cdot}{\eps}\right) \cdot \nabla (f\eta^\eps)-\eps^2 w\left(\frac{\cdot}{\eps}\right) \Delta (f\eta^\eps).
\end{align*}
Recalling that $\dis \widehat{b}^\eps=\frac{1}{\eps}b\left(\frac{\cdot}{\varepsilon}\right)$, we get
\begin{align*}
& -\alpha\Delta \phi^\eps + \widehat{b}^\eps\cdot\nabla\phi^\eps 
\\
& = f + \alpha \Delta w\left(\frac{\cdot}{\eps}\right) f\eta^\eps+ 2 \eps\alpha \nabla w\left(\frac{\cdot}{\eps}\right) \cdot \nabla (f\eta^\eps)
\\
&\qquad + \frac{1}{\eps} b\left(\frac{\cdot}{\eps}\right) \cdot \Big(- \eps \nabla w\left(\frac{\cdot}{\eps}\right) f \eta^\eps -\eps^2 w\left(\frac{\cdot}{\eps}\right) \nabla (f\eta^\eps) \Big) + \eps^2\alpha w\left(\frac{\cdot}{\eps}\right) \Delta (f\eta^\eps)
\\
& =f+\Big(\alpha \Delta w\left(\frac{\cdot}{\eps}\right) - b\left(\frac{\cdot}{\eps}\right) \cdot \nabla w\left(\frac{\cdot}{\eps}\right) \Big) f \eta^\eps + 2 \eps \alpha \nabla w\left(\frac{\cdot}{\eps}\right) \cdot \nabla (f\eta^\eps) 
\\
& \qquad - \eps w\left(\frac{\cdot}{\eps}\right) b\left(\frac{\cdot}{\eps}\right) \cdot \nabla (f\eta^\eps) + \eps^2 \alpha w\left(\frac{\cdot}{\eps}\right) \Delta (f\eta^\eps)
\\
& =f(1-\eta^\eps)+ \eps \Big( 2 \alpha \nabla w\left(\frac{\cdot}{\eps}\right) - w\left(\frac{\cdot}{\eps}\right) b\left(\frac{\cdot}{\eps}\right) \Big) \cdot \nabla (f\eta^\eps)+ \eps^2 \alpha w\left(\frac{\cdot}{\eps}\right) \Delta (f\eta^\eps).
\end{align*}
We infer from the above that
\begin{align*}
&\quad \left\| -\alpha \Delta \phi^\eps + \widehat{b}^\eps \cdot \nabla\phi^\eps \right\|_{L^2(\Omega^\eps)} 
\\
&\leq \|f\|_{L^\infty(\Omega)} \|1-\eta^\eps\|_{L^2(\Omega)}
\\
& \quad + \eps \|2 \alpha\nabla w- w \, b \|_{L^\infty(Y \setminus\overline{\mathcal{O}})} \Big( \|f\|_{L^\infty(\Omega)} \|\nabla\eta^\eps\|_{L^2(\Omega)} + \|\nabla f\|_{L^2(\Omega)} \|\eta^\eps\|_{L^\infty(\Omega)} \Big)
\\
&\quad +\eps^2 \alpha \|w\|_{L^\infty(Y \setminus\overline{\mathcal{O}})} \Big( \|f\|_{L^\infty(\Omega)} \|\Delta\eta^\eps\|_{L^2(\Omega)} + 2\|\nabla f\|_{L^2(\Omega)} \| \nabla \eta^\eps\|_{L^\infty(\Omega)} + \|\Delta f\|_{L^2(\Omega)} \|\eta^\eps\|_{L^\infty(\Omega)} \Big)
\\
&\leq C\sqrt{\eps} \, \Big( \|f\|_{L^\infty(\Omega)} + \|\nabla f\|_{L^2(\Omega)} + \|\Delta f\|_{L^2(\Omega)} \Big),
\end{align*}
where $C$ is independent of $\eps$ and $f$, and where we have used that $w \in W^{1,\infty}(Y \setminus\overline{\mathcal{O}})$ (see~\eqref{eq:regul_w_dir}) and $b \in (L^\infty(Y \setminus\overline{\mathcal{O}}))^d$ (see~\eqref{eq:msp2}).

Noticing that $\phi^\eps$ vanishes on the boundary of $\Omega^\eps$ and that $\text{div} \, b \leq 0$ on $Y \setminus\overline{\mathcal{O}}$, we obtain
\begin{equation}
  \int_{\Omega^\eps} \alpha|\nabla \phi^\eps|^2
  \leq
  \int_{\Omega^\eps} \left(-\alpha \Delta \phi^\eps + \widehat{b}^\eps \cdot \nabla \phi^\eps \right) \phi^\eps
  \leq
  C\sqrt{\eps} \ \|\phi^\eps\|_{L^2(\Omega^\eps)}\mathcal{N}(f).
\label{ineq_12}
\end{equation}
Combining~\eqref{ineq_12} and the Poincar\'e inequality~\eqref{ineq_13} we recalled above, we get
\begin{equation}
| \phi^\eps |_{H^1(\Omega^\eps)}\leq C \eps^{3/2}\mathcal{N}(f).
\label{poincare_perforated}
\end{equation}
To estimate $\dis \left| u^\eps - \eps^2 w\left(\frac{\cdot}{\eps}\right) f \right|_{H^1(\Omega^\eps)}$, we use the triangle inequality and write 
\begin{equation}
\left| u^\eps-\eps^2 w\left(\frac{\cdot}{\eps}\right) f \right|_{H^1(\Omega^\eps)} \leq |\phi^\eps|_{H^1(\Omega^\eps)}+ \eps^2 \left| w\left(\frac{\cdot}{\eps}\right) f (1-\eta^\eps) \right|_{H^1(\Omega^\eps)}.
\label{ineq_14}
\end{equation}
We are thus left with bounding the quantity $\dis \eps^2 \left| w\left(\frac{\cdot}{\eps}\right) f(1-\eta^\eps) \right|_{H^1(\Omega^\eps)}$. To this aim, we compute 
$$
\eps^2\nabla \left[ w\left(\frac{\cdot}{\eps}\right) f (1-\eta^\eps) \right] = \eps \nabla w\left(\frac{\cdot}{\eps}\right) f (1-\eta^\eps) + \eps^2 w\left(\frac{\cdot}{\eps}\right) \nabla f - \eps^2 w\left(\frac{\cdot}{\eps}\right) \nabla (f \eta^\eps).
$$
We then have 
\begin{align}
& \eps^2 \left| w\left(\frac{\cdot}{\eps}\right) f(1-\eta^\eps) \right|_{H^1(\Omega^\eps)}
\nonumber
\\ 
& \leq 
\eps \left\| \nabla w\left(\frac{\cdot}{\eps}\right) f (1-\eta^\eps) \right\|_{L^2(\Omega^\eps)} +\eps^2 \left\| w\left(\frac{\cdot}{\eps}\right) \nabla f \right\|_{L^2(\Omega^\eps)} + \eps^2 \left\| w\left(\frac{\cdot}{\eps}\right) \nabla (f \eta^\eps) \right\|_{L^2(\Omega^\eps)}
\nonumber
\\
& \leq C \eps^{3/2}\mathcal{N}(f),
\label{inequality_2}
\end{align}
where we again used~\eqref{eq:regul_w_dir}. We infer from~\eqref{ineq_14}, \eqref{poincare_perforated} and~\eqref{inequality_2} that
$$
\left| u^\eps - \eps^2 w\left(\frac{\cdot}{\eps}\right) f \right|_{H^1(\Omega^\eps)}\leq C \eps^{3/2}\mathcal{N}(f).
$$
This concludes the proof of Theorem~\ref{theorem_dirichlet_perf}.
\end{proof}

\subsection{Homogeneous Neumann boundary condition and assuming~\eqref{eq:structure}}
\label{section_homogeneous_neumann_bc_div0}

We prove Theorem~\ref{perf_neumann_theorem_b_osc_div0} using two-scale convergence, as in~\cite[Theorem 2.9]{allaire1992homogenization}. The problem considered in~\cite[Theorem 2.9]{allaire1992homogenization} is a diffusion problem with a zero-order term, while the problem we consider here is an advection-diffusion problem without zero-order term. Although the problems are different, the arguments of the proof are essentially similar. We thus only detail the steps in the argument different from those in~\cite[Theorem 2.9]{allaire1992homogenization}.

\medskip

As a preliminary step, before we are in position to prove Theorem~\ref{perf_neumann_theorem_b_osc_div0}, we establish the following Poincar\'e inequality. We recall (see~\eqref{eq:def_Veps}) that
\begin{equation}
\label{eq:def_Veps_bis}
V^\eps = \left\{ u\in H^1(\Omega^\eps) \text{ such that $u=0$ on $\partial\Omega^\eps \cap \partial\Omega$} \right\}.
\end{equation}

\begin{lemma}
  \label{lem:poinc}
We assume~\eqref{eq:periodic_perfo} and~\eqref{eq:geo_perfo} (uniformly in $\eps$), that $\overline{\cal O} \subset Y$ and that $Y \setminus \overline{\cal O}$ is a connected open set of $\R^d$.

Then there exists $C$ independent of $\eps$ such that
\begin{equation}
\forall u \in V^\eps, \quad \| u \|_{L^2(\Omega^\eps)} \leq C \| \nabla u \|_{L^2(\Omega^\eps)}.
\label{eq:poinc_allaire}
\end{equation}
\end{lemma}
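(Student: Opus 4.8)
The plan is to establish \eqref{eq:poinc_allaire} by a compactness--contradiction argument, the crucial ingredient being a uniform (in $\eps$) extension operator from the perforated domain to the whole of $\Omega$. First I would invoke the existence of a family of extension operators $P_\eps : H^1(\Omega^\eps) \to H^1(\Omega)$ such that $(P_\eps u)|_{\Omega^\eps} = u$ and
$$
\|P_\eps u\|_{L^2(\Omega)} \leq C \|u\|_{L^2(\Omega^\eps)}, \qquad \|\nabla P_\eps u\|_{L^2(\Omega)} \leq C\|\nabla u\|_{L^2(\Omega^\eps)},
$$
with $C$ independent of $\eps$. Such operators are classical for isolated perforations ($\overline{\cal O}\subset Y$) and a connected cell $Y\setminus\overline{\cal O}$; see~\cite{cioranescu1979homogenization,acerbi1992extension}. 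The role of assumption~\eqref{eq:geo_perfo} is precisely to rule out the degenerate geometries of Figure~\ref{fig:oui} (right) and to ensure that the construction can be carried out with uniform bounds up to the outer boundary $\partial\Omega$.

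I would then argue by contradiction. If \eqref{eq:poinc_allaire} were false, there would exist a sequence $\eps_n \to 0$ and functions $u_n \in V^{\eps_n}$ with $\|u_n\|_{L^2(\Omega^{\eps_n})} = 1$ and $\|\nabla u_n\|_{L^2(\Omega^{\eps_n})} \to 0$. Setting $\tilde u_n = P_{\eps_n} u_n$, the bounds above show that $(\tilde u_n)$ is bounded in $H^1(\Omega)$ and that $\|\nabla \tilde u_n\|_{L^2(\Omega)}\to 0$. By the Rellich theorem, up to extraction, $\tilde u_n \to u$ strongly in $L^2(\Omega)$ and weakly in $H^1(\Omega)$, with $\nabla u = 0$; since $\Omega$ is connected, $u \equiv \lambda$ is a constant. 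Because $\mathbf 1_{\Omega^{\eps_n}} \rightharpoonup |Y\setminus\overline{\cal O}|/|Y|$ weakly-$*$ in $L^\infty(\Omega)$ while $|\tilde u_n|^2 \to \lambda^2$ strongly in $L^1(\Omega)$, passing to the limit in $1 = \|\tilde u_n\|_{L^2(\Omega^{\eps_n})}^2 = \int_\Omega \mathbf 1_{\Omega^{\eps_n}} |\tilde u_n|^2$ would yield $\lambda^2\, |Y\setminus\overline{\cal O}|\,|\Omega| / |Y| = 1$, so that $\lambda \neq 0$.

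It would then remain to reach a contradiction by showing $\lambda = 0$, and this is where the boundary condition enters. Since $\tilde u_n = u_n$ on $\Omega^{\eps_n}$ and the perforations are isolated, the trace of $\tilde u_n$ on $\partial\Omega$ coincides a.e. on $\partial\Omega^{\eps_n}\cap\partial\Omega$ with the trace of $u_n$, which vanishes there; hence $\int_{\partial\Omega^{\eps_n}\cap\partial\Omega} |\tilde u_n|^2\, d\sigma = 0$. The trace operator $H^1(\Omega) \to L^2(\partial\Omega)$ being compact, $\tilde u_n \to \lambda$ strongly in $L^2(\partial\Omega)$, so $|\tilde u_n|^2 \to \lambda^2$ strongly in $L^1(\partial\Omega)$. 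Extracting further so that $\mathbf 1_{\partial\Omega^{\eps_n}\cap\partial\Omega} \rightharpoonup \theta$ weakly-$*$ in $L^\infty(\partial\Omega)$ with $\int_{\partial\Omega}\theta\,d\sigma = \lim_n |\partial\Omega^{\eps_n}\cap\partial\Omega| \geq c\,|\partial\Omega| > 0$ (using the measure lower bound recalled after Theorem~\ref{perf_neumann_theorem_b_osc_div0}), passing to the limit would give $0 = \int_{\partial\Omega} \theta\,\lambda^2\, d\sigma = \lambda^2 \int_{\partial\Omega}\theta\,d\sigma$, whence $\lambda = 0$, a contradiction.

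The main obstacle is the uniform extension operator. For interior cells this is the classical periodic extension; the delicate point is the uniform control near $\partial\Omega$, where perforations may meet the outer boundary, and it is exactly here that \eqref{eq:geo_perfo} is needed. The remaining ingredients (Rellich compactness, compactness of the trace, and the product of weakly-$*$ and strongly convergent sequences) are standard on the fixed domain $\Omega$.
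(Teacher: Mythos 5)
Your proof is correct in its overall logic, but it takes a genuinely different route from the paper. The paper's argument is direct and quantitative: it constructs, cell by cell, an extension $v$ of $u$ into the perforations with a gradient-only bound (the analogue of your $P_\eps$, but built explicitly via subtracting the cell mean, surjectivity of the trace on $\partial\mathcal{O}$, and Poincar\'e--Wirtinger, then rescaling --- this is~\eqref{eq:claim1}); crucially, for perforations cut by $\partial\Omega$ it uses~\eqref{eq:geo_perfo} together with $u=0$ on $\partial\Omega\cap\partial\Omega^\eps$ to make the extension vanish on \emph{all} of $\partial\Omega$, so that $v\in H^1_0(\Omega)$ with $\|\nabla v\|_{L^2(\Omega)}\leq C\|\nabla u\|_{L^2(\Omega^\eps)}$ (see~\eqref{eq:claim3}), and the lemma then follows in one line from the standard Poincar\'e inequality in $H^1_0(\Omega)$. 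Your compactness--contradiction scheme avoids requiring the extension to vanish on $\partial\Omega$ (a mere uniformly bounded $H^1(\Omega)$ extension suffices), at the price of extra machinery: Rellich, compactness of the trace, weak-$*$ limits of oscillating characteristic functions, and the measure lower bound $|\partial\Omega^\eps\cap\partial\Omega|\geq c\,|\partial\Omega|$; all these limit passages are carried out correctly, and the argument is sound. What each approach buys: the paper's proof is shorter, constructive, and gives a traceable constant; yours is more flexible in principle (it would survive with weaker boundary information). The one caveat is that you invoke the uniform extension operator by citation, acknowledging the control near $\partial\Omega$ as ``the main obstacle'' without carrying it out: the references~\cite{cioranescu1979homogenization,acerbi1992extension} cover interior perforations, but the extension across perforations meeting $\partial\Omega$ under~\eqref{eq:geo_perfo} is precisely the nonstandard part, and it is where the paper's Step~1 does its real work --- so your proof delegates, rather than replaces, the crux of the paper's argument, even though you correctly identify exactly where~\eqref{eq:geo_perfo} enters and the construction you assert is indeed feasible.
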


\begin{proof}[Proof of Lemma~\ref{lem:poinc}]
The result is intuitively clear. Since there is no boundary conditions on the perforations for functions in $V^\eps$, these perforations can be ignored and the Poincar\'e inequality is that of the domain $\Omega$. The proof indeed consists in extending $u \in H^1(\Omega^\eps)$ into $v \in H^1(\Omega)$ and applying the Poincar\'e inequality to the latter. We provide the detailed proof of~\eqref{eq:poinc_allaire}, which falls in two steps, for consistency. 

\medskip

\noindent \textbf{Step 1.} For any $u \in V^\eps$, we show here how to build a suitable extension $v$ of $u$ in the perforations $B^\eps$. We recall that, in view of~\eqref{eq:periodic_perfo}, we have $\dis B^\eps = \Omega \cap \left( \cup_{k \in \Z^d} (\eps \mathcal{O} + \eps k) \right)$. 

Let $\overline{u} \in H^1(Y \setminus \overline{\mathcal{O}})$. We claim that there exists $\overline{v} \in H^1(Y)$ with $\overline{v}=\overline{u}$ in $Y \setminus \overline{\mathcal{O}}$ and 
\begin{equation}
\label{eq:claim1}
\| \nabla \overline{v} \|_{L^2(Y)} \leq C \| \nabla \overline{u} \|_{L^2(Y\setminus \overline{\mathcal{O}})}
\end{equation}
for some $C$ independent of $\overline{u}$ and $\overline{v}$.

Consider indeed the function $\overline{u} - c$, where $\dis c = \frac{1}{|Y \setminus \overline{\mathcal{O}}|} \int_{Y \setminus \overline{\mathcal{O}}} \overline{u}$. This function admits a trace on $\partial \mathcal{O}$ which belongs to $H^{1/2}(\partial \mathcal{O})$. By surjectivity of the trace, there exists $\overline{w} \in H^1(\mathcal{O})$ with $\overline{w} = \overline{u} - c$ on $\partial \mathcal{O}$ and $\| \overline{w} \|_{H^1(\mathcal{O})} \leq C_{\rm surj} \| \overline{u} - c \|_{H^{1/2}(\partial \mathcal{O})}$. We then define $\overline{v} \in L^2(Y)$ by $\overline{v}=\overline{u}$ in $Y \setminus \overline{\mathcal{O}}$ and $\overline{v}=\overline{w}+c$ in $\mathcal{O}$. By construction of $\overline{w}$, we have that $\overline{v} \in H^1(Y)$. Furthermore, we compute
\begin{eqnarray*}
\| \nabla \overline{v} \|^2_{L^2(Y)}
&=&
\| \nabla \overline{v} \|^2_{L^2(Y \setminus \overline{\mathcal{O}})} + \| \nabla \overline{v} \|^2_{L^2(\mathcal{O})}
\\
&=&
\| \nabla \overline{u} \|^2_{L^2(Y \setminus \overline{\mathcal{O}})} + \| \nabla \overline{w} \|^2_{L^2(\mathcal{O})}
\\
&\leq&
\| \nabla \overline{u} \|^2_{L^2(Y \setminus \overline{\mathcal{O}})} + \| \overline{w} \|^2_{H^1(\mathcal{O})}
\\
&\leq&
\| \nabla \overline{u} \|^2_{L^2(Y \setminus \overline{\mathcal{O}})} + C_{\rm surj}^2 \| \overline{u} - c \|^2_{H^{1/2}(\partial \mathcal{O})}.
\end{eqnarray*}
Using the trace inequality and the Poincar\'e-Wirtinger inequality in $Y \setminus \overline{\mathcal{O}}$, we deduce from the above that
\begin{eqnarray*}
\| \nabla \overline{v} \|^2_{L^2(Y)}
&\leq&
\| \nabla \overline{u} \|^2_{L^2(Y \setminus \overline{\mathcal{O}})} + C_{\rm surj}^2 C_{\rm trace}^2 \| \overline{u} - c \|^2_{H^1(Y \setminus \overline{\mathcal{O}})}
\\
&\leq&
\| \nabla \overline{u} \|^2_{L^2(Y \setminus \overline{\mathcal{O}})} + C_{\rm surj}^2 C_{\rm trace}^2 C_{\rm PW}^2 \| \nabla \overline{u} \|^2_{L^2(Y \setminus \overline{\mathcal{O}})},
\end{eqnarray*}
which concludes the proof of~\eqref{eq:claim1}.

\medskip

By scaling, we next deduce from~\eqref{eq:claim1} that there exists $C$ independent of $\eps$ such that, for any $u \in H^1(\eps (Y \setminus \overline{\mathcal{O}}))$, there exists $v \in H^1(\eps Y)$ with $v=u$ in $\eps (Y \setminus \overline{\mathcal{O}})$ and 
$$
\| \nabla v \|_{L^2(\eps Y)} \leq C \| \nabla u \|_{L^2(\eps(Y\setminus \overline{\mathcal{O}}))}.
$$
Consider now $u \in V^\eps$, defined in $\Omega^\eps$. We extend $u$ in $\Omega$ as follows:
\begin{itemize}
\item For each perforation such that $\eps \overline{\mathcal{O}} + \eps k$ is included in $\Omega$, we extend $u$ in $\eps \mathcal{O} + \eps k$ as above.
\item For each perforation $\eps \mathcal{O} + \eps k$ that intersects the boundary $\partial \Omega$, we extend $u$ in $\Omega \cap (\eps \mathcal{O} + \eps k)$ by a function that is equal to $u$ on $\partial(\eps \mathcal{O} + \eps k) \cap \Omega$ and that vanishes on $\partial \Omega$. To build this extension, we use the assumption~\eqref{eq:geo_perfo}, the fact that $u=0$ on $\partial \Omega \cap \partial \Omega^\eps$ along with the same arguments as above.
\end{itemize}
Doing so, we thus construct $v \in H^1_0(\Omega)$ such that $v=u$ on $\Omega^\eps$ and 
\begin{equation}
\label{eq:claim3}
\| \nabla v \|_{L^2(\Omega)} \leq C \| \nabla u \|_{L^2(\Omega^\eps)}
\end{equation}
where $C$ is independent of $\eps$, $u$ and $v$.

\medskip

\noindent \textbf{Step 2.} Let $u \in V^\eps$ and let $v \in H^1_0(\Omega)$ be the extension built in Step 1. Using the Poincar\'e inequality in $H^1_0(\Omega)$ and~\eqref{eq:claim3}, we write that
$$
\| u \|_{L^2(\Omega^\eps)} 
\leq 
\| v \|_{L^2(\Omega)}
\leq 
C \| \nabla v \|_{L^2(\Omega)}
\leq
C \| \nabla u \|_{L^2(\Omega^\eps)}.
$$ 
This concludes the proof of Lemma~\ref{lem:poinc}.
\end{proof}

\medskip

\begin{proof}[Proof of Theorem~\ref{perf_neumann_theorem_b_osc_div0}]
We recall that the variational form of~\eqref{perf_neumann_pb_perforated} reads as
\begin{equation}
\label{eq:perf_neumann_FV}
\text{Find $u^\eps \in V^\eps$ such that, for any $v \in V^\eps$}, \quad a^\eps(u^\eps,v) = \int_{\Omega^\eps} f \, v,
\end{equation}
where $V^\eps$ is defined by~\eqref{eq:def_Veps_bis} and the bilinear form $a^\eps$ is defined (see~\eqref{eq:def_a_F}; we now make explicit the dependency of the bilinear form with respect to $\eps$) by
\begin{equation}
\label{eq:perf_neumann_abil}
a^\eps(u,v) = \int_{\Omega^\eps} \alpha \nabla u \cdot \nabla v + \int_{\Omega^\eps} \left( \widehat{b}^\eps \cdot \nabla u \right) \, v.
\end{equation}
The proof of the theorem falls in 3 steps.

\medskip

\noindent \textbf{Step 1: well-posedness and a priori estimates.} In view of the regularity of $b$ (see~\eqref{eq:msp2}), the bilinear form $a^\eps$ is well-defined on $V^\eps \times V^\eps$. In view of~\eqref{eq:structure} and~\eqref{eq:poinc_allaire}, $a^\eps$ is coercive on $V^\eps$, uniformly with respect to $\eps$. Indeed, for any $u \in V^\eps$, we have
\begin{align}
a^\eps(u,u)
& =\int_{\Omega^\eps} \alpha|\nabla u|^2 +\frac{1}{2} \int_{\partial\Omega^\eps} \left( \widehat{b}^\eps\cdot n \right) u^2 - \frac{1}{2}\int_{\Omega^\eps} \left( \text{div} \, \widehat{b}^\eps \right) u^2
\nonumber
\\
&\geq \alpha \|\nabla u\|_{L^2(\Omega^\eps)}^2
\nonumber
\\
&\geq C \| u\|_{H^1(\Omega^\eps)}^2,
\label{eq:coer1}
\end{align}
successively using that $\text{div }\,b\leq 0$ in $Y \setminus \overline{\mathcal{O}}$, $b\cdot n \geq 0$ on $\partial\mathcal{O}$, $u=0$ on $\partial \Omega^\eps \cap \partial \Omega$ and the Poincar\'e inequality shown in Lemma~\ref{lem:poinc}. The bilinear form~\eqref{eq:perf_neumann_abil} is thus coercive, uniformly with respect to $\eps$. This of course implies that Problem~\eqref{perf_neumann_pb_perforated} is well-posed, and also that $u^\eps$ is uniformly bounded in $H^1(\Omega^\eps)$.

Let $\widetilde{u}^\eps \in L^2(\Omega)$ be the extension by zero of the solution $u^\eps$ to~\eqref{perf_neumann_pb_perforated}, and let likewise $\widetilde{\nabla}u^\eps \in (L^2(\Omega))^d$ be the extension by zero of $\nabla u^\eps$. 
We thus have that $\widetilde{u}^\eps$ (resp. $\widetilde{\nabla}u^\eps$) is uniformly bounded in $L^2(\Omega)$ (resp. $(L^2(\Omega))^d$) with respect to $\eps$. 

\medskip

\noindent \textbf{Step 2: homogenized limit.} Because of the above bounds, we know that there exists $u_0 \in L^2(\Omega\times Y)$ (resp. $\xi_0 \in (L^2(\Omega \times Y))^d$) such that the sequence $\widetilde{u}^\eps$ (resp. $\widetilde{\nabla}u^\eps$) two-scale converges, up to the extraction of a subsequence, to $u_0$ (resp. $\xi_0$). Following the proof of~\cite[Theorem 2.9]{allaire1992homogenization}, we obtain that there exists $u^\star \in H^1_0(\Omega)$ and $u_1 \in L^2 \left(\Omega,H^1_{\rm per}(Y \setminus\overline{\mathcal{O}})/\R \right)$ such that 
\begin{equation}
\label{eq:resu_2scale}
u_0(x,y)=u^\star(x) \, \mathds{1}_{Y \setminus\overline{\mathcal{O}}}(y)
\ \ \text{and} \ \
\xi_0(x,y)=\mathds{1}_{Y \setminus\overline{\mathcal{O}}}(y) \, (\nabla u^\star(x) + \nabla_y u_1(x,y)).
\end{equation}
To identify the equation satisfied by $u^\star$ and $u_1$, we consider, following~\cite{allaire1992homogenization}, the test function 
$$
\phi^\eps(x)=\phi(x)+\eps\phi_1\left(x,\frac{x}{\varepsilon}\right),
$$
where $\phi\in\mathcal{C}^\infty_c(\Omega)$ and $\phi_1 \in \mathcal{C}^\infty_c(\Omega;\mathcal{C}^\infty_{\rm per}(Y))$, the gradient of which reads
$$
\nabla\phi^\eps(x)=\nabla\phi(x)+\nabla_y\phi_1\left(x,\frac{x}{\varepsilon}\right) + \eps \nabla\phi_1\left(x,\frac{x}{\varepsilon}\right).
$$
Choosing $v \equiv \phi^\eps$ in the variational formulation~\eqref{eq:perf_neumann_FV} of~\eqref{perf_neumann_pb_perforated}, we obtain
\begin{equation}
\alpha\,\int_\Omega \widetilde{\nabla} u^\eps\cdot\nabla \phi^\eps + \int_\Omega b\left(\frac{\cdot}{\varepsilon}\right)\cdot\widetilde{\nabla} u^\eps \phi^\eps = \int_{\Omega^\eps} f \, \phi^\eps= \int_\Omega \mathds{1}_{\cal P}\left(\frac{\cdot}{\varepsilon} \right) f \, \phi^\eps,
\label{eq_2}
\end{equation}
where we recall that ${\cal P}$ is defined by~\eqref{eq:def_P}.

\medskip

We check that the function $\nabla\phi(x) + \nabla_y\phi_1(x,y)$ is an admissible test function in the sense of the two-scale convergence (indeed, it belongs to $L^2(\Omega,C^0_{\rm per}(Y))$, and such functions are admissible in view of~\cite[Lemma 1.3 and Definition 1.4]{allaire1992homogenization}). Likewise, in view of~\eqref{eq:msp2}, we see that $b\left(y\right) \, \phi(x)$ is also an admissible test function.
Passing to the limit $\eps \to 0$ in~\eqref{eq_2} and using~\eqref{eq:resu_2scale}, we thus obtain
\begin{multline*}
\alpha\,\int_{\Omega\times Y}\mathds{1}_{Y \setminus\overline{\mathcal{O}}} \, (\nabla u^\star + \nabla_y u_1)\cdot(\nabla\phi+\nabla_y\phi_1) + \int_{\Omega\times Y}\mathds{1}_{Y \setminus\overline{\mathcal{O}}} \, (\nabla u^\star + \nabla_y u_1)\cdot b \, \phi \\ = \int_{\Omega \times Y}\mathds{1}_{Y \setminus\overline{\mathcal{O}}} \, f \, \phi.
\end{multline*}
By a density argument, we have that this variational formulation holds for any $(\phi,\phi_1) \in H^1_0(\Omega) \times L^2(\Omega;H^1_{\rm per}(Y))$. We thus have
\begin{equation}
\label{eq:francois1}
\forall \phi_1 \in H^1_{\rm per}(Y), \quad \int_{Y \setminus\overline{\mathcal{O}}} (\nabla u^\star + \nabla_y u_1) \cdot \nabla_y\phi_1 = 0
\end{equation}
and
\begin{multline}
\label{eq:francois2}
\forall\phi \in H^1_0(\Omega), \quad \alpha \, \int_{\Omega\times (Y \setminus\overline{\mathcal{O}})} (\nabla u^\star + \nabla_y u_1) \cdot \nabla\phi 
\\
+ \int_{\Omega\times (Y \setminus\overline{\mathcal{O}})} (\nabla u^\star + \nabla_y u_1) \cdot b \, \phi = |Y \setminus\overline{\mathcal{O}}| \, \int_\Omega f \, \phi.
\end{multline}
Let $w_i$, $1\leq i\leq d$, be the corrector, solution to~\eqref{perf_neumann_pb_corrector_b_per_oscill}. We deduce from~\eqref{eq:francois1} that
$$
u_1(x,y)= \overline{u}_1(x) + \sum_{i=1}^d w_i(y) \, \partial_{x_i} u^\star(x)
$$
where $\overline{u}_1$ only depends on $x$. Inserting this expression in~\eqref{eq:francois2}, we get that $u^\star$ satisfies
$$
\forall\phi \in H^1_0(\Omega), \quad \int_\Omega A^\star \nabla u^\star \cdot \nabla \phi + (b^\star \cdot \nabla u^\star) \, \phi = \frac{|Y \setminus\overline{\mathcal{O}}|}{|Y|} \int_\Omega f \, \phi,
$$
where $A^\star$ (resp. $b^\star$) is defined by~\eqref{eq:def_astar_b} (resp.~\eqref{eq:def_bstar_b}). This is exactly the variational formulation of~\eqref{eq:pb_homog_b}.

Since $u^\star$ is uniquely determined (note indeed that $b^\star$ is constant, hence divergence-free) and $\nabla_y u_1$ is as well uniquely determined, we infer that the whole sequence $\widetilde{u}^\eps$ (resp. $\widetilde{\nabla} u^\eps$) two-scale converges to $u_0 \in L^2(\Omega \times Y)$ (resp. to $\xi_0 \in (L^2(\Omega \times Y))^d$). 

\medskip

\noindent \textbf{Step 3: $H^1$ convergence.} Let $\dis u^{\eps,1} = u^\star + \eps \sum_{i=1}^d w_i\left( \frac{\cdot}{\eps}\right) \, \partial_{x_i} u^\star$ and $\dis \xi_1(x,y) = \sum_{i=1}^d (e_i + \nabla w_i(y)) \, \partial_{x_i} u^\star(x)$, so that $\dis \nabla u^{\eps,1} = \xi_1\left(\cdot, \frac{\cdot}{\eps}\right) + \eps \, \xi_2\left(\cdot, \frac{\cdot}{\eps}\right)$ with $\dis \xi_2(x,y)= \sum_{i=1}^d w_i(y) \, \partial_{x_i} \nabla u^\star(x)$. We note that $\xi_0(x,y)=\mathds{1}_{Y \setminus\overline{\mathcal{O}}}(y) \, \xi_1(x,y)$. 

We note that $u^{\eps,1}$ does not vanish on $\partial \Omega$. This is a usual difficulty in homogenization, which is standardly addressed by introducing a truncation function $\eta^\eps$ as in the proof of Theorem~\ref{theorem_dirichlet_perf} (see Appendix~\ref{section_homogeneous_dirichlet_bc}), and considering $\dis g^{\eps,1} = u^\star + \eps \eta^\eps \sum_{i=1}^d w_i\left( \frac{\cdot}{\eps}\right) \, \partial_{x_i} u^\star \in V^\eps$. Under sufficient regularity assumptions on $\mathcal{O}$, we have $w_i \in W^{1,\infty}(Y \setminus \overline{\mathcal{O}})$. Furthermore, since $f \in L^2(\Omega)$ and $\Omega$ is smooth, we have $u^\star \in H^2(\Omega)$. We thus have
\begin{equation}
\label{eq:lim_debut}
\lim_{\eps \to 0} \| u^{\eps,1} - g^{\eps,1} \|_{H^1(\Omega^\eps)} = 0.
\end{equation}
We are now left with estimating $\| u^\eps - g^{\eps,1} \|_{H^1(\Omega^\eps)}$. Using~\eqref{eq:coer1}, we write
\begin{align}
\! C \| u^\eps - g^{\eps,1} \|_{H^1(\Omega^\eps)}^2
& \leq a^\eps(u^\eps - g^{\eps,1},u^\eps - g^{\eps,1})
\nonumber
\\
&= \int_{\Omega^\eps} f (u^\eps-g^{\eps,1}) + a^\eps(g^{\eps,1},g^{\eps,1}) - a^\eps(g^{\eps,1},u^\eps)
\nonumber
\\
&= \int_{\Omega^\eps} f (u^\eps-u^{\eps,1}) + a^\eps(u^{\eps,1},u^{\eps,1}) - a^\eps(u^{\eps,1},u^\eps) + R_\eps,
\label{eq:lim0}
\end{align}
where
\begin{equation}
\label{eq:lim00}
\lim_{\eps \to 0} R_\eps = 0
\end{equation}
as a consequence of~\eqref{eq:lim_debut}.

We successively pass to the limit in the three terms of~\eqref{eq:lim0}. For the first one, we write $\dis \int_{\Omega^\eps} f \, u^\eps = \int_\Omega f \, \widetilde{u}^\eps$, thus
\begin{equation}
\label{eq:lim1}
\lim_{\eps \to 0} \int_{\Omega^\eps} f \, u^\eps 
=
\int_{\Omega \times Y} f \, u_0
=
\left| Y \setminus\overline{\mathcal{O}} \right| \int_\Omega f \, u^\star,
\end{equation}
while $\dis \int_{\Omega^\eps} f \, u^{\eps,1} = \int_\Omega \mathds{1}_{\cal P}\left(\frac{\cdot}{\varepsilon} \right) f \, u^{\eps,1}$, hence 
\begin{equation}
\label{eq:lim1bis}
\lim_{\eps \to 0} \int_{\Omega^\eps} f \, u^{\eps,1} 
= 
\int_{\Omega \times Y}\mathds{1}_{Y \setminus\overline{\mathcal{O}}} \, f \, u^\star
=
\left| Y \setminus\overline{\mathcal{O}} \right| \int_\Omega f \, u^\star.
\end{equation}
For the second term of~\eqref{eq:lim0}, we write
\begin{eqnarray*}
&& a^\eps(u^{\eps,1},u^{\eps,1})
\\
&=&
\int_{\Omega^\eps} \alpha \nabla u^{\eps,1} \cdot \nabla u^{\eps,1} + \int_{\Omega^\eps} \left[ b\left(\frac{\cdot}{\eps}\right) \cdot \nabla u^{\eps,1} \right] \, u^{\eps,1}
\\
&=&
\int_{\Omega^\eps} \alpha \xi_1\left(\cdot, \frac{\cdot}{\eps}\right) \cdot \xi_1\left(\cdot, \frac{\cdot}{\eps}\right) 
+
2 \eps \int_{\Omega^\eps} \alpha \xi_1\left(\cdot, \frac{\cdot}{\eps}\right) \cdot \xi_2\left(\cdot, \frac{\cdot}{\eps}\right)
+
\eps^2 \int_{\Omega^\eps} \alpha \xi_2\left(\cdot, \frac{\cdot}{\eps}\right) \cdot \xi_2\left(\cdot, \frac{\cdot}{\eps}\right)
\\
&& \qquad 
+ \int_{\Omega^\eps} \left[ b\left(\frac{\cdot}{\eps}\right) \cdot \xi_1\left(\cdot, \frac{\cdot}{\eps}\right) \right] \, u^{\eps,1}
+
\eps \int_{\Omega^\eps} \left[ b\left(\frac{\cdot}{\eps}\right) \cdot \xi_2\left(\cdot, \frac{\cdot}{\eps}\right) \right] \, u^{\eps,1}
\\
&=&
\int_\Omega \alpha \xi_0\left(\cdot, \frac{\cdot}{\eps}\right) \cdot \xi_0\left(\cdot, \frac{\cdot}{\eps}\right) 
+
2 \eps \int_{\Omega^\eps} \alpha \xi_1\left(\cdot, \frac{\cdot}{\eps}\right) \cdot \xi_2\left(\cdot, \frac{\cdot}{\eps}\right)
+
\eps^2 \int_{\Omega^\eps} \alpha \xi_2\left(\cdot, \frac{\cdot}{\eps}\right) \cdot \xi_2\left(\cdot, \frac{\cdot}{\eps}\right)
\\
&& \qquad 
+ \int_\Omega \left[ b\left(\frac{\cdot}{\eps}\right) \cdot \xi_0\left(\cdot, \frac{\cdot}{\eps}\right) \right] \, u^{\eps,1}
+
\eps \int_{\Omega^\eps} \left[ b\left(\frac{\cdot}{\eps}\right) \cdot \xi_2\left(\cdot, \frac{\cdot}{\eps}\right) \right] \, u^{\eps,1}.
\end{eqnarray*}
Passing to the limit $\eps \to 0$, we get that
\begin{multline}
\label{eq:lim2}
\lim_{\eps \to 0} a^\eps(u^{\eps,1},u^{\eps,1})
\\
=
\int_{\Omega \times Y} \alpha \xi_0(x,y) \cdot \xi_0(x,y)
+ \int_{\Omega \times Y} \left[ b(y) \cdot \xi_0(x,y) \right] \, u^\star(x).
\end{multline}
For the third term of~\eqref{eq:lim0}, we write
\begin{eqnarray*}
&& a^\eps(u^{\eps,1},u^\eps)
\\
&=&
\int_{\Omega^\eps} \alpha \nabla u^{\eps,1} \cdot \nabla u^\eps + \int_{\Omega^\eps} \left[ b\left(\frac{\cdot}{\eps}\right) \cdot \nabla u^{\eps,1} \right] \, u^\eps
\\
&=&
\int_\Omega \alpha \xi_1\left(\cdot, \frac{\cdot}{\eps}\right) \cdot \widetilde{\nabla} u^\eps 
+
\eps \int_\Omega \alpha \xi_2\left(\cdot, \frac{\cdot}{\eps}\right) \cdot \widetilde{\nabla} u^\eps 
\\
&& \qquad 
+ \int_\Omega \left[ b\left(\frac{\cdot}{\eps}\right) \cdot \xi_1\left(\cdot, \frac{\cdot}{\eps}\right) \right] \, \widetilde{u}^\eps
+
\eps \int_\Omega \left[ b\left(\frac{\cdot}{\eps}\right) \cdot \xi_2\left(\cdot, \frac{\cdot}{\eps}\right) \right] \, \widetilde{u}^\eps.
\end{eqnarray*}
Passing to the limit $\eps \to 0$ and using the two-scale limits of $\widetilde{u}^\eps$ and $\widetilde{\nabla} u^\eps$, we get that
\begin{eqnarray}
\label{eq:lim3}
&& \lim_{\eps \to 0} a^\eps(u^{\eps,1},u^\eps)
\nonumber
\\
&=&
\int_{\Omega \times Y} \alpha \xi_1(x,y) \cdot \xi_0(x,y)
+ \int_{\Omega \times Y} \left[ b(y) \cdot \xi_1(x,y) \right] \, u_0(x)
\nonumber
\\
&=&
\int_{\Omega \times Y} \alpha \xi_0(x,y) \cdot \xi_0(x,y)
+ \int_{\Omega \times Y} \left[ b(y) \cdot \xi_0(x,y) \right] \, u^\star(x).
\end{eqnarray}
Collecting~\eqref{eq:lim0}, \eqref{eq:lim00}, \eqref{eq:lim1}, \eqref{eq:lim1bis}, \eqref{eq:lim2} and~\eqref{eq:lim3}, we deduce that 
$$
\lim_{\eps \to 0} \| u^\eps - g^{\eps,1} \|_{H^1(\Omega^\eps)} = 0.
$$
Collecting this result with~\eqref{eq:lim_debut}, we deduce the claimed $H^1$ convergence. This concludes the proof of Theorem~\ref{perf_neumann_theorem_b_osc_div0}.
\end{proof}

\subsection{Homogeneous Neumann boundary condition, the general drift case}
\label{section_homogeneous_neumann_bc_gene}

We now prove Theorem~\ref{perf_neumann_theorem_b_osc_gene}, by essentially showing that we can reduce the case of a general drift to the case considered in Theorem~\ref{perf_neumann_theorem_b_osc_div0}. The proof of Theorem~\ref{perf_neumann_theorem_b_osc_gene} is based on the following decomposition result.

\begin{lemma}
  \label{lem:decompo}
Let $Y=(0,1)^d$ be the unit square and ${\cal O} \subset Y$ be some smooth perforation. We assume that $\overline{\mathcal{O}} \subset Y$ and that $Y \setminus \overline{\mathcal{O}}$ is a connected open set of $\R^d$. 

Consider a vector-valued function $b_{\rm per} \in (L^2_{\rm loc}({\cal P}))^d$ which is $Y$-periodic (we recall that ${\cal P}$ is defined by~\eqref{eq:def_P}). There exists $\phi_{\rm per} \in H^1_{\rm loc}({\cal P})$ and $B_{\rm per} \in (L^2_{\rm loc}({\cal P}))^d$, which are both $Y$-periodic, such that
\begin{equation}
\label{eq:decompo_ptes}
b_{\rm per} = \nabla \phi_{\rm per} + B_{\rm per} \ \ \text{in ${\cal P}$}, \qquad \text{div } B_{\rm per} = 0 \ \ \text{in ${\cal P}$}, \qquad B_{\rm per} \cdot n = 0 \ \ \text{on $\partial {\cal O}$}.
\end{equation}
\end{lemma}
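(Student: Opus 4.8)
The plan is to construct $\phi_{\rm per}$ as the solution of a periodic Neumann problem, chosen precisely so that the remainder $B_{\rm per} := b_{\rm per} - \nabla \phi_{\rm per}$ inherits the two desired properties as \emph{natural} (variational) conditions. First I would work on a single periodicity cell. Introduce the quotient space
$$
\mathcal{H} = \left\{ v \in H^1_{\rm loc}({\cal P}) : \ \text{$v$ is $Y$-periodic} \right\} / \R,
$$
which, thanks to the connectedness of $Y \setminus \overline{\mathcal{O}}$ and the resulting Poincar\'e--Wirtinger inequality, is a Hilbert space for the norm $v \mapsto \| \nabla v \|_{L^2(Y \setminus \overline{\mathcal{O}})}$. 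Consider the variational problem: find $\phi_{\rm per} \in \mathcal{H}$ such that
$$
\forall v \in \mathcal{H}, \quad \int_{Y \setminus \overline{\mathcal{O}}} \nabla \phi_{\rm per} \cdot \nabla v = \int_{Y \setminus \overline{\mathcal{O}}} b_{\rm per} \cdot \nabla v.
$$
The bilinear form is coercive by the very choice of norm, and the right-hand side is a continuous linear form on $\mathcal{H}$ (it is well-defined on the quotient since it vanishes on constants, and bounded by $\| b_{\rm per} \|_{L^2(Y \setminus \overline{\mathcal{O}})} \| \nabla v \|_{L^2(Y \setminus \overline{\mathcal{O}})}$). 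Lax--Milgram then yields a unique $\phi_{\rm per}$, which I extend $Y$-periodically to $H^1_{\rm loc}({\cal P})$.

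I would then set $B_{\rm per} = b_{\rm per} - \nabla \phi_{\rm per} \in (L^2_{\rm loc}({\cal P}))^d$, manifestly $Y$-periodic, so that the Euler--Lagrange identity becomes
$$
\forall v \in \mathcal{H}, \quad \int_{Y \setminus \overline{\mathcal{O}}} B_{\rm per} \cdot \nabla v = 0.
$$
Testing against $v \in C^\infty_c(Y \setminus \overline{\mathcal{O}})$ gives $\text{div } B_{\rm per} = 0$ in the sense of distributions, so that $B_{\rm per} \in H(\text{div}; Y \setminus \overline{\mathcal{O}})$ and admits a normal trace in $H^{-1/2}(\partial \mathcal{O})$. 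Applying the Green formula for $H(\text{div})$ to a general periodic $v$, the boundary contributions on the opposite faces of $\partial Y$ cancel by periodicity (equal values of $B_{\rm per}$ and $v$, opposite outward normals), leaving $\langle B_{\rm per} \cdot n, v \rangle_{\partial \mathcal{O}} = 0$. Since $\overline{\mathcal{O}} \subset Y$, every $g \in H^{1/2}(\partial \mathcal{O})$ is the trace of some periodic $v$ (take $v$ supported away from $\partial Y$), and therefore $B_{\rm per} \cdot n = 0$ in $H^{-1/2}(\partial \mathcal{O})$. Extending $B_{\rm per}$ periodically, all three identities in~\eqref{eq:decompo_ptes} hold throughout ${\cal P}$.

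The only delicate point is the rigorous meaning of $B_{\rm per} \cdot n = 0$: as $b_{\rm per}$ is merely $L^2$, no pointwise normal trace is available, and one must invoke the $H(\text{div})$ trace theorem to interpret the condition in $H^{-1/2}(\partial \mathcal{O})$. Beyond this functional-analytic bookkeeping, the construction is a standard periodic Helmholtz (Hodge) decomposition; the connectedness hypothesis on $Y \setminus \overline{\mathcal{O}}$ enters exactly once, to secure the Poincar\'e--Wirtinger inequality underlying the well-posedness of the problem defining $\phi_{\rm per}$.
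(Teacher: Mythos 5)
Your construction coincides with the paper's: the paper poses exactly your variational problem for $\phi_{\rm per}$ (on the zero-mean subspace of periodic $H^1_{\rm loc}({\cal P})$ functions rather than the quotient by constants, an immaterial difference), gets coercivity from Poincar\'e--Wirtinger using the connectedness of $Y \setminus \overline{\mathcal{O}}$, sets $B_{\rm per} = b_{\rm per} - \nabla \phi_{\rm per}$, and reads off the two properties of $B_{\rm per}$; your $H(\text{div})$-trace bookkeeping merely makes explicit what the paper dispatches in one sentence.

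There is, however, one step whose stated justification would fail. You assert that in the Green formula the contributions on $\partial Y$ ``cancel by periodicity (equal values of $B_{\rm per}$ and $v$, opposite outward normals)''. Equality of $v$ on opposite faces is indeed automatic for periodic $v$, but equality of the \emph{inside-cell normal traces} of $B_{\rm per}$ on opposite faces is not a consequence of periodicity: a periodic $L^2$ field that is $H(\text{div})$ cell by cell can have mismatched normal traces across the faces, i.e.\ a surface-supported singular part in its global distributional divergence (take $B = \nabla h$ with $h$ harmonic in $Y$, periodic boundary values, non-matching normal derivatives). The correct logic runs in the opposite direction: your Euler--Lagrange identity holds for \emph{all} periodic $v$, including those not vanishing near $\partial Y$; having already obtained $\text{div}\, B_{\rm per}=0$ in the cell and $B_{\rm per}\cdot n=0$ on $\partial \mathcal{O}$ (the latter, as you correctly do, with $v$ supported away from $\partial Y$), Green's formula then yields $\langle B_{\rm per}\cdot n, v\rangle_{\partial Y}=0$ for every periodic $v$, which is precisely the statement that the normal traces on opposite faces match. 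This matching is not decorative: it is exactly what your final sentence needs, since without it the periodic extension of $B_{\rm per}$ would be divergence-free only in each translated cell and not across the cell interfaces, i.e.\ not in all of ${\cal P}$. (Equivalently, for $v \in C^\infty_c(\R^d)$ one may test the cell identity with the periodization $\sum_{k\in\Z^d} v(\cdot+k)$ and unfold, obtaining $\int_{{\cal P}} B_{\rm per}\cdot\nabla v = 0$ directly, which encodes at once $\text{div}\,B_{\rm per}=0$ in ${\cal P}$ and $B_{\rm per}\cdot n=0$ on every perforation boundary.) With this reordering the proof is complete and agrees with the paper's.
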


\begin{proof}
Consider the space $\dis V = \left\{ \phi \in H^1_{\rm loc}({\cal P}), \quad \text{$\phi$ is $Y$-periodic}, \quad \int_{Y \setminus \overline{\mathcal{O}}} \phi = 0 \right\}$ and the problem: find $\phi_{\rm per} \in V$ such that
\begin{equation}
  \label{eq:decompo}
\forall v \in V, \quad \int_{Y \setminus \overline{\mathcal{O}}} \nabla \phi_{\rm per} \cdot \nabla v = \int_{Y \setminus \overline{\mathcal{O}}} b_{\rm per} \cdot \nabla v.
\end{equation}
The bilinear form on the left-hand side of~\eqref{eq:decompo} is coercive on $V$ thanks to the Poincar\'e-Wirtinger inequality (note that $Y \setminus \overline{\mathcal{O}}$ is connected). The problem~\eqref{eq:decompo} is thus well-posed. Setting $B_{\rm per} = b_{\rm per} - \nabla \phi_{\rm per}$, we observe that $\text{div } B_{\rm per} = 0$ in $Y \setminus \overline{\mathcal{O}}$ and $B_{\rm per} \cdot n = 0$ on $\partial {\cal O}$. Using the periodicity of $b_{\rm per}$, we furthermore deduce that $\text{div } B_{\rm per} = 0$ in ${\cal P}$.
\end{proof}

\medskip

\begin{proof}[Proof of Theorem~\ref{perf_neumann_theorem_b_osc_gene}]
The well-posedness of~\eqref{perf_neumann_pb_perforated} can be established using the inf-sup theory, with arguments similar to those used in~\cite{madiot_fredholm} (the proof is actually simple in the case when $\dis \left\| \widehat{b}^\eps \right\|_{L^\infty(\Omega^\eps)}$ is small, or when $\widehat{b}^\eps$ is irrotational). 

To study the homogenized limit of~\eqref{perf_neumann_pb_perforated}, we start by using Lemma~\ref{lem:decompo} and write $b = \nabla \phi_{\rm per} + B_{\rm per}$. By construction, we have $\Delta \phi_{\rm per} = \text{div} \, b$ in ${\cal P}$ and $\nabla \phi_{\rm per} \cdot n = b \cdot n$ on $\partial {\cal O}$. The $W^{1,p}$ regularity of $b$ implies that $\phi_{\rm per} \in W^{2,p}_{\rm loc}({\cal P})$, which implies that $B_{\rm per} \in (W^{1,p}_{\rm loc}({\cal P}))^d$. Of course, we also have that $\phi_{\rm per} \in L^\infty_{\rm loc}({\cal P})$.

Introduce now $\dis \sigma_\eps(x) = \exp \left(- \frac{\eps}{\alpha} \phi_{\rm per}(x/\eps) \right)$. A simple calculation shows that~\eqref{perf_neumann_pb_perforated} can be written as
\begin{equation}
\left\{\begin{array}{rcl}
\dis - \alpha \, \text{div} \left( \sigma_\eps \nabla u^\eps \right) + \sigma_\eps \, B_{\rm per}\left(\frac{\cdot}{\eps}\right) \cdot \nabla u^\eps &=& \sigma_\eps \, f \quad\text{in }\Omega^\eps,
\\
\alpha \, \sigma_\eps \nabla u^\eps \cdot n &=& 0 \quad\text{on }\partial\Omega^\eps\setminus\partial\Omega,
\\
u^\eps &=& 0\quad\text{on }\partial\Omega^\eps\cap\partial\Omega.
\end{array}\right.
\label{perf_neumann_pb_perforated_transfo}
\end{equation}
Since $\phi_{\rm per} \in L^\infty(Y \setminus \overline{\mathcal{O}})$, we deduce that
\begin{equation}
\label{eq:lim_sigma}
\lim_{\eps \to 0} \left\| \sigma_\eps - 1 \right\|_{L^\infty(\Omega^\eps)} = 0.
\end{equation} 
We are thus led to introduce the following problem:
\begin{equation}
\left\{\begin{array}{rcl}
\dis - \alpha \Delta v^\eps + B_{\rm per}\left(\frac{\cdot}{\eps}\right) \cdot \nabla v^\eps &=& f \quad\text{in }\Omega^\eps,
\\
\alpha \, \nabla v^\eps \cdot n &=& 0 \quad\text{on }\partial\Omega^\eps\setminus\partial\Omega,
\\
v^\eps &=& 0\quad\text{on }\partial\Omega^\eps\cap\partial\Omega,
\end{array}\right.
\label{perf_neumann_pb_perforated_transfo2}
\end{equation}
which is well-posed in view of~\eqref{eq:decompo_ptes}. We claim that
\begin{equation}
\label{eq:lim_sigma2}
\lim_{\eps \to 0} \left\| u^\eps - v^\eps \right\|_{H^1(\Omega^\eps)} = 0.
\end{equation} 
Indeed, we see that
\begin{multline*}
- \alpha \Delta (v^\eps-u^\eps) + B_{\rm per}\left(\frac{\cdot}{\eps}\right) \cdot \nabla (v^\eps-u^\eps) \\ = f(1-\sigma_\eps) + (\sigma_\eps-1) B_{\rm per}\left(\frac{\cdot}{\eps}\right) \cdot \nabla u^\eps - \alpha \, \text{div} \left( (\sigma_\eps-1) \nabla u^\eps \right) \quad \text{in }\Omega^\eps.
\end{multline*}
Multiplying by $v^\eps-u^\eps$, integrating by part and using~\eqref{eq:decompo_ptes} and the fact that $B_{\rm per} \in (L^\infty(Y \setminus \overline{\mathcal{O}}))^d$, we deduce that
\begin{align*}
  & \alpha \| \nabla (v^\eps-u^\eps) \|^2_{L^2(\Omega^\eps)}
  \\
  & \leq \left\| \sigma_\eps - 1 \right\|_{L^\infty(\Omega^\eps)} \| v^\eps-u^\eps \|_{L^2(\Omega^\eps)} \left( \| f \|_{L^2(\Omega^\eps)} +  \left\| B_{\rm per} \right\|_{L^\infty(Y \setminus \overline{\mathcal{O}})} \| \nabla u^\eps \|_{L^2(\Omega^\eps)} \right)
  \\
  & \qquad + \alpha \left\| \sigma_\eps - 1 \right\|_{L^\infty(\Omega^\eps)} \| \nabla (v^\eps-u^\eps) \|_{L^2(\Omega^\eps)} \| \nabla u^\eps \|_{L^2(\Omega^\eps)}.
\end{align*}
For $\eps$ sufficiently small, we deduce from~\eqref{eq:poinc_allaire} and~\eqref{eq:lim_sigma} that
$$
\| v^\eps-u^\eps \|_{H^1(\Omega^\eps)} \leq C \left\| \sigma_\eps - 1 \right\|_{L^\infty(\Omega^\eps)} \left( 1 + \| \nabla v^\eps \|_{L^2(\Omega^\eps)} \right).
$$
A simple a-priori estimation in the coercive problem~\eqref{perf_neumann_pb_perforated_transfo2} shows that $v^\eps$ is bounded in $H^1(\Omega^\eps)$ uniformly in $\eps$. The limit~\eqref{eq:lim_sigma} thus implies~\eqref{eq:lim_sigma2}.

\medskip

We now identify the homogenized limit of~\eqref{perf_neumann_pb_perforated_transfo2}. We observe that the advection field in that equation satisfies all the assumptions made in Theorem~\ref{perf_neumann_theorem_b_osc_div0}. 
In particular, it is divergence-free. Using Theorem~\ref{perf_neumann_theorem_b_osc_div0}, we thus deduce that 
\begin{equation}
\label{eq:lim_sigma3}
\lim_{\eps \to 0} \left\| v^\eps - \left( u^\star + \eps \sum_{i=1}^d w_i\left(\frac{\cdot}{\eps}\right) \partial_{x_i} u^\star \right) \right\|_{H^1(\Omega^\eps)} = 0,
\end{equation} 
where $u^\star$ is the solution to the problem
$$
\left\{\begin{aligned}
-\textnormal{div }(A^\star\nabla u^\star)+ b^\star\cdot\nabla u^\star &= \frac{|Y \setminus\overline{\mathcal{O}}|}{|Y|} \, f \quad\text{in }\Omega, \\
u^\star &= 0 \quad\text{on }\partial\Omega,
\end{aligned}\right.
$$
where the matrix $A^\star$ and the vector $b^\star$ are constant and given, for $1\leq i\leq d$, by
\begin{align*}
A^\star\,e_i &= \frac{1}{|Y|}\int_{Y \setminus\overline{\mathcal{O}}}\alpha\,\big(e_i+\nabla w_i\big),
\\
b^\star \cdot e_i&= \frac{1}{|Y|}\int_{Y \setminus\overline{\mathcal{O}}} B_{\rm per} \cdot (e_i+\nabla w_i),
\end{align*}
and where $w_i$ is the solution to the cell problem~\eqref{perf_neumann_pb_corrector_b_per_oscill}. We eventually observe that
$$
b^\star \cdot e_i
=
\frac{1}{|Y|} \int_{Y \setminus\overline{\mathcal{O}}} b \cdot (e_i+\nabla w_i) - \frac{1}{|Y|} \int_{Y \setminus\overline{\mathcal{O}}} \nabla \phi_{\rm per} \cdot (e_i+\nabla w_i).
$$
The second term of the right-hand side vanishes. Indeed, we compute
\begin{multline*}
\int_{Y \setminus\overline{\mathcal{O}}} \nabla \phi_{\rm per} \cdot (e_i+\nabla w_i)
\\
=
\int_{\partial Y} \phi_{\rm per} \ (e_i+\nabla w_i) \cdot n 
+
\int_{\partial \mathcal{O}} \phi_{\rm per} \ (e_i+\nabla w_i) \cdot n 
-
\int_{Y \setminus\overline{\mathcal{O}}} \phi_{\rm per} \ \text{div } (e_i+\nabla w_i).
\end{multline*}
In the above right-hand side, the first term vanishes using the periodicity of $\phi_{\rm per}$ and $w_i$, while the last two terms vanish in view of the corrector equation~\eqref{perf_neumann_pb_corrector_b_per_oscill}. We thus obtain that, for $1\leq i\leq d$,
$$
b^\star \cdot e_i = \frac{1}{|Y|} \int_{Y \setminus\overline{\mathcal{O}}} b \cdot (e_i+\nabla w_i).
$$
Collecting~\eqref{eq:lim_sigma2} and~\eqref{eq:lim_sigma3} yields the claimed convergence result. This concludes the proof of Theorem~\ref{perf_neumann_theorem_b_osc_gene}.
\end{proof}

\section{Proof of the error estimate~\eqref{estimate_adv_msfem}}
\label{proofs-error-estimates}

\subsection{Some preliminary material}
\label{section_preparatory_material}

Before being in position to present the proof of Theorem~\ref{theorem_adv_msfem}, which follows that of~\cite[Theorem 2.2]{lebris2014msfem}, we need some preliminary results. 

\medskip

We recall (see~\eqref{ineq_13}) that
$$
\forall u \in H^1_0(\Omega^\eps), \quad \| u \|_{L^2(\Omega^\eps)} \leq C \eps | u |_{H^1(\Omega^\eps)},
$$
where $C$ is a constant independent of $\eps$. In view of~\cite[Lemma 4.5]{jankowiak_lozinski}, and under the assumption that all coarse elements intersect the perforations, we also have
\begin{equation}
\forall u \in X_{H,B^\eps}, \quad \| u\|_{L^2(\Omega^\eps)} \leq C \, \eps \, |u|_{H^1_H(\Omega^\eps)},
\label{poincare_WH}
\end{equation}
where $C$ is a constant independent of $\eps$, where $\dis |u|^2_{H^1_H(\Omega^\eps)}=\sum_{K\in\mathcal{T}_H}\|\nabla u\|^2_{L^2(K\cap\Omega^\eps)}$ and 
$$
X_{H,B^\eps}=\left\{
\text{$u\in L^2(\Omega)$ such that $u|_K \in H^1(K)$ for any $K\in \mathcal{T}_H$ and $u=0$ in $B^\eps$}
\right\}.
$$
The proof of~\eqref{poincare_WH} is actually performed element per element.

\medskip

We now recall three lemmas, borrowed from~\cite[Section 3]{lebris2014msfem}. The first lemma is a trace inequality. For any domain $\omega\subset \R^d$, it is classical to define the space
$$
H^{1/2}(\omega) = \left\{ u\in L^2(\omega), \quad \int_\omega\int_\omega\frac{|u(x)-u(y)|^2}{|x-y|^{d+1}}dxdy < +\infty \right\},
$$
and the norm
$$
\|u\|_{H^{1/2}(\omega)} = \left( \|u\|_{L^{2}(\omega)}^2+|u|_{H^{1/2}(\omega)}^2\right)^{1/2}
$$
where
$$
|u|_{H^{1/2}(\omega)}=\left(\int_\omega\int_\omega\frac{|u(x)-u(y)|^2}{|x-y|^{d+1}}dxdy\right)^{1/2}.
$$

\medskip

\begin{lemma}[Lemma 3.2 of~\cite{lebris2014msfem}]
\label{lemma_trace}
There exists $C$ (depending on the regularity of the mesh) such that, for any $K\in\mathcal{T}_H$ and any edge $E\subset\partial K$, we have
\begin{equation}
\forall v\in H^1(K), \quad \|v\|^2_{L^2(E)} \leq C \left( H^{-1} \|v\|^2_{L^2(K)} + H \|\nabla v\|^2_{L^2(K)} \right).
\label{ineq_16}
\end{equation}
Under the additional assumption that $\dis \int_E v = 0$, we have
\begin{equation}
\|v\|^2_{L^2(E)}\leq CH\|\nabla v\|_{L^2(K)}^2
\label{ineq_17}
\end{equation}
and 
\begin{equation*}
\|v\|^2_{H^{1/2}(E)}\leq C(1+H)\|\nabla v\|_{L^2(K)}^2.
\end{equation*}
\end{lemma}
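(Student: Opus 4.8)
The plan is to prove all three estimates by a single scaling argument: map $K$ onto a reference cell of unit size, invoke fixed (scale-independent) inequalities there, and track the powers of $H$ on the way back. Fix $K\in\mathcal{T}_H$ and let $F(\hat x)=H\hat x+x_0$ be the affine bijection from a reference element $\hat K$ onto $K$, carrying a reference edge $\hat E$ onto $E$; for $v$ defined on $K$ set $\hat v=v\circ F$, so that $\nabla v=H^{-1}(\nabla\hat v)\circ F^{-1}$. Since $E$ is a $(d-1)$-dimensional face, the relevant scalings are $\|v\|_{L^2(E)}^2=H^{d-1}\|\hat v\|_{L^2(\hat E)}^2$, $\|v\|_{L^2(K)}^2=H^{d}\|\hat v\|_{L^2(\hat K)}^2$ and $\|\nabla v\|_{L^2(K)}^2=H^{d-2}\|\nabla\hat v\|_{L^2(\hat K)}^2$, the last exponent combining one factor $H^{-2}$ from the chain rule with $H^{d}$ from the volume element. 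On a general shape-regular mesh $H$ is replaced by the element diameter and one picks up the usual shape-regularity constants, which does not affect the argument.

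For~\eqref{ineq_16} I would use the classical trace theorem on the fixed domain $\hat K$, namely $\|\hat v\|_{L^2(\hat E)}^2\le C(\|\hat v\|_{L^2(\hat K)}^2+\|\nabla\hat v\|_{L^2(\hat K)}^2)$, and substitute the scalings above; multiplying through by $H^{d-1}$ turns the two right-hand terms into $H^{-1}\|v\|_{L^2(K)}^2$ and $H\|\nabla v\|_{L^2(K)}^2$, which is exactly~\eqref{ineq_16}. For~\eqref{ineq_17}, the hypothesis $\int_E v=0$ rescales to $\int_{\hat E}\hat v=0$. Writing $\bar v=|\hat K|^{-1}\int_{\hat K}\hat v$ and applying the trace theorem to $\hat v-\bar v$ together with the Poincar\'e--Wirtinger inequality $\|\hat v-\bar v\|_{L^2(\hat K)}\le C\|\nabla\hat v\|_{L^2(\hat K)}$, I obtain $\|\hat v-\bar v\|_{L^2(\hat E)}\le C\|\nabla\hat v\|_{L^2(\hat K)}$; the constant part is absorbed using the mean-zero condition, since Cauchy--Schwarz gives $|\bar v|\,|\hat E|^{1/2}=|\hat E|^{-1/2}\,|\int_{\hat E}(\bar v-\hat v)|\le\|\hat v-\bar v\|_{L^2(\hat E)}$, whence $\|\hat v\|_{L^2(\hat E)}\le 2\|\hat v-\bar v\|_{L^2(\hat E)}\le C\|\nabla\hat v\|_{L^2(\hat K)}$. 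Rescaling (left side by $H^{d-1}$, right side by $H^{d-2}$) yields $\|v\|_{L^2(E)}^2\le CH\|\nabla v\|_{L^2(K)}^2$.

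The third estimate is handled identically once one observes that the Gagliardo seminorm on the $(d-1)$-dimensional face $E$ uses the kernel $|x-y|^{-d}$ and therefore scales as $|v|_{H^{1/2}(E)}^2=H^{d-2}\,|\hat v|_{H^{1/2}(\hat E)}^2$. Because this seminorm is invariant under adding constants, I bound $|\hat v|_{H^{1/2}(\hat E)}=|\hat v-\bar v|_{H^{1/2}(\hat E)}\le C\|\hat v-\bar v\|_{H^1(\hat K)}\le C\|\nabla\hat v\|_{L^2(\hat K)}$, using continuity of the trace into $H^{1/2}(\hat E)$ and Poincar\'e--Wirtinger; scaling back gives $|v|_{H^{1/2}(E)}^2\le C\|\nabla v\|_{L^2(K)}^2$, and adding~\eqref{ineq_17} produces the stated $C(1+H)$ bound. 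The only genuinely delicate point is the bookkeeping of exponents: one must consistently treat $E$ as $(d-1)$-dimensional (surface measure scaling like $H^{d-1}$, fractional kernel $|x-y|^{-d}$) and remember that the gradient norm on $K$ carries the anomalous factor $H^{d-2}$ rather than $H^{d}$. Everything else reduces to standard, scale-invariant facts on the fixed reference cell.
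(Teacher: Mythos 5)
Your proof is correct, and it is the standard scaling argument: the paper does not reprove this lemma (it quotes Lemma 3.2 of~\cite{lebris2014msfem}), whose proof proceeds exactly as you do, by mapping to a reference element, applying the trace theorem together with the Poincar\'e--Wirtinger inequality (absorbing the mean via the condition $\int_E v=0$), and tracking powers of $H$. Your exponent bookkeeping is right throughout, including the intrinsic Gagliardo kernel $|x-y|^{-d}$ on the $(d-1)$-dimensional face, which yields the scale factor $H^{d-2}$ for the $H^{1/2}$ seminorm and hence the $H$-independent bound on $|v|_{H^{1/2}(E)}^2$ that, combined with~\eqref{ineq_17}, gives the stated $C(1+H)$ estimate.
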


\medskip

\begin{lemma}[Corollary 3.3 of~\cite{lebris2014msfem}]
\label{corollary_trace}
Consider an edge $E\in \mathcal{E}^{\text{in}}_H$, and let $K_E\subset\mathcal{T}_H$ denote all the triangles sharing this edge. There exists $C$ (depending only on the regularity of the mesh) such that 
\begin{equation}
\forall v \in W_H, \quad \| \, [[ v]] \, \|^2_{L^2(E)} \leq CH \sum_{K\in K_E}\|\nabla v\|^2_{L^2(K)}
\label{ineq_19}
\end{equation}
and
\begin{equation}
\forall v \in W_H, \quad \| \, [[ v]] \, \|^2_{H^{1/2}(E)} \leq C(1+H) \sum_{K\in K_E}\|\nabla v\|^2_{L^2(K)},
\label{ineq_20}
\end{equation}
where we recall that $W_H$ is defined by~\eqref{eq:def_WH}.
\end{lemma}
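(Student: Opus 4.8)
The plan is to derive both estimates from the refined trace inequalities of Lemma~\ref{lemma_trace}, the crucial ingredient being the weak-continuity constraint $\int_E [[v]] = 0$ that is built into the definition~\eqref{eq:def_WH} of $W_H$. First I would fix an interior edge $E \in \mathcal{E}^{\text{in}}_H$ and denote by $K_1$ and $K_2$ the two triangles of $K_E$ sharing $E$, so that $[[v]] = v|_{K_1} - v|_{K_2}$ on $E$ (up to the orientation convention, which does not affect the norms at stake). For $i \in \{1,2\}$, I would introduce the mean of the trace of $v|_{K_i}$ on $E$, namely $\dis c_i = \frac{1}{|E|} \int_E v|_{K_i}$.

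The key observation is that the constraint $\int_E [[v]] = 0$ forces $c_1 = c_2$. Indeed, $\int_E [[v]] = \int_E v|_{K_1} - \int_E v|_{K_2} = |E| \, (c_1 - c_2)$, so the two mean traces coincide; call their common value $c$. Consequently the jump can be rewritten as $[[v]] = (v|_{K_1} - c) - (v|_{K_2} - c)$, where each summand $v|_{K_i} - c$ has zero mean on $E$ by construction and satisfies $\nabla(v|_{K_i} - c) = \nabla v$ on $K_i$ (since $c$ is constant). This is precisely the situation covered by the zero-mean trace inequality~\eqref{ineq_17} and by the $H^{1/2}$ estimate of Lemma~\ref{lemma_trace}.

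It then remains to assemble the pieces. Setting $w_i = v|_{K_i} - c$, an application of~\eqref{ineq_17} gives $\| w_i \|^2_{L^2(E)} \leq C H \, \|\nabla v\|^2_{L^2(K_i)}$, and the triangle inequality together with $(a+b)^2 \leq 2a^2 + 2b^2$ yields $\| [[v]] \|^2_{L^2(E)} \leq 2 C H \, \big( \|\nabla v\|^2_{L^2(K_1)} + \|\nabla v\|^2_{L^2(K_2)} \big)$, which is exactly~\eqref{ineq_19}. The bound~\eqref{ineq_20} follows in the same manner, applying instead the $H^{1/2}$ inequality of Lemma~\ref{lemma_trace} (valid for zero-mean traces) to each $w_i$ and using that $\| \cdot \|_{H^{1/2}(E)}$ is a norm.

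I do not anticipate a serious obstacle here; the single point that must not be overlooked is that the refined inequality~\eqref{ineq_17} and its $H^{1/2}$ counterpart require the zero-mean condition on $E$. The whole argument hinges on recognizing that subtracting one and the same constant $c$ from both traces (rather than two distinct means $c_1 \neq c_2$) simultaneously cancels in the jump and renders each summand of zero mean on $E$, which is possible \emph{precisely because} membership in $W_H$ enforces $\int_E [[v]] = 0$. Using the coarser inequality~\eqref{ineq_16} instead would introduce an unwanted $H^{-1}\|v\|^2_{L^2(K_i)}$ contribution and destroy the clean scaling in $H$.
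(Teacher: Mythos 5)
Your proof is correct and follows essentially the same route as the original argument behind this lemma (the paper itself simply cites Corollary~3.3 of~\cite{lebris2014msfem}, whose proof proceeds exactly as you do): the weak-continuity constraint $\int_E [[v]]=0$ makes the two edge means coincide, one subtracts that common constant from both traces, and the zero-mean estimates~\eqref{ineq_17} and the $H^{1/2}$ bound of Lemma~\ref{lemma_trace} applied on each of the two elements of $K_E$ yield~\eqref{ineq_19} and~\eqref{ineq_20}. Nothing further is needed.
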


\medskip

\begin{lemma}[Lemma 3.4 of~\cite{lebris2014msfem}]
\label{lemma_periodic}
Let $g\in L^\infty(\R)$ be a $q$-periodic function with zero mean. Let $f\in W^{1,1}(0,H)\subset C^0(0,H)$ be a function defined on the interval $[0,H]$ that vanishes at least at one point of $[0,H]$. Then, for any $\eps >0$,
$$
\left|\int_0^H g\left(\frac{x}{\eps}\right) f(x) dx\right| \leq 2 \, \eps \, q \, \|g\|_{L^\infty(\R)} \, \|f'\|_{L^1(0,H)}.
$$
\end{lemma}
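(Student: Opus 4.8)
The plan is to reduce the estimate to a uniform bound on the primitive of the oscillating factor $g(\cdot/\eps)$, and then to exploit the vanishing of $f$ at the point $x_0\in[0,H]$ together with a change-of-order (Fubini) argument.

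First I would introduce the primitive $G(t)=\int_0^t g(s)\,ds$. Since $g$ is $q$-periodic with $\int_0^q g=0$, the function $G$ is itself $q$-periodic; being continuous and periodic it is bounded, and the elementary estimate $|G(t)|\le q\,\|g\|_{L^\infty(\R)}$ on one period (hence everywhere) holds. Consequently, for any reals $a\le b$ one has $\left|\int_a^b g\right|=|G(b)-G(a)|\le q\,\|g\|_{L^\infty(\R)}$, and by the change of variables $x=\eps u$ this yields the scale-free bound $\left|\int_c^d g(x/\eps)\,dx\right|\le \eps\,q\,\|g\|_{L^\infty(\R)}$ for any $c\le d$. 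This uniform control of definite integrals of the oscillating function over \emph{arbitrary} subintervals is the crux of the argument; it is precisely here that the zero-mean and periodicity hypotheses are used.

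Next I would use that $f(x_0)=0$ to write $f(x)=\int_{x_0}^x f'(t)\,dt$ for every $x$ (legitimate since $f\in W^{1,1}(0,H)\hookrightarrow C^0([0,H])$, so the pointwise value $f(x_0)$ is meaningful), and substitute this representation into the integral to be estimated. After exchanging the order of integration, the contribution of each $f'(t)$ is weighted by an $x$-integral of $g(\cdot/\eps)$ over the subinterval of $[0,H]$ lying between $x_0$ and the running variable $x$; depending on the sign of $t-x_0$ this weight equals $\pm\int_t^H g(x/\eps)\,dx$ or $\pm\int_0^t g(x/\eps)\,dx$, and in either case its absolute value is bounded by $\eps\,q\,\|g\|_{L^\infty(\R)}$ by the first step. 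Pulling this uniform bound out of the integral gives
$$
\left|\int_0^H g\!\left(\frac{x}{\eps}\right) f(x)\,dx\right|\le \eps\,q\,\|g\|_{L^\infty(\R)}\int_0^H |f'(t)|\,dt,
$$
which is even slightly stronger than the claimed inequality (the stated constant $2$ simply leaves room for a cruder bound on $G$, or equivalently for an integration-by-parts variant that splits the interval at $x_0$ and retains a boundary term).

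I expect the only genuine obstacle to be the first step, namely establishing the bound on $\int_a^b g$ uniformly in both the length and the position of $[a,b]$, since this is where periodicity and zero mean must be combined; the Fubini bookkeeping associated with the split at $x_0$ is routine sign-tracking, and the embedding $W^{1,1}\hookrightarrow C^0$ guarantees that both the evaluation $f(x_0)$ and the representation $f(x)=\int_{x_0}^x f'$ are well defined.
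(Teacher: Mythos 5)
Your proof is correct. Note first that the present paper does not prove Lemma~\ref{lemma_periodic}; it recalls it from~\cite[Section 3]{lebris2014msfem}, so the relevant comparison is with the standard argument behind that reference. Your argument is exactly that circle of ideas: the entire content of the lemma is the uniform bound $\left|\int_c^d g(x/\eps)\,dx\right|\le \eps\,q\,\|g\|_{L^\infty(\R)}$ over \emph{arbitrary} subintervals (full periods contribute zero by the zero-mean assumption, the leftover has length $<\eps q$), combined with the representation $f(x)=\int_{x_0}^x f'$ made legitimate by $W^{1,1}\hookrightarrow C^0$. Your Fubini bookkeeping then even yields the constant $1$, sharper than the stated $2$; the factor $2$ in the statement corresponds to the integration-by-parts variant, in which one sets $G(x)=\int_0^x g(t/\eps)\,dt$, writes $\int_0^H g(\cdot/\eps)f=G(H)f(H)-\int_0^H G\,f'$, and bounds the boundary term (via $|f(H)|=\left|\int_{x_0}^H f'\right|\le\|f'\|_{L^1(0,H)}$) and the remaining integral by $\eps\,q\,\|g\|_{L^\infty(\R)}\,\|f'\|_{L^1(0,H)}$ each. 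So your route and the reference's route are two equivalent organizations of the same estimate, yours avoiding the boundary term at the cost of a sign-split at $x_0$.

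One small slip in your first step, which does not endanger the conclusion: from $|G|\le q\,\|g\|_{L^\infty(\R)}$ alone, the triangle inequality only gives $|G(b)-G(a)|\le 2q\,\|g\|_{L^\infty(\R)}$. The sharper bound $\left|\int_a^b g\right|\le q\,\|g\|_{L^\infty(\R)}$ that you assert needs either the direct full-periods-plus-remainder argument, or the refinement $|G(t)|\le\min(t,q-t)\,\|g\|_{L^\infty(\R)}\le \tfrac{q}{2}\|g\|_{L^\infty(\R)}$ on $[0,q]$ (using $G(q)=0$). Since the target inequality carries the factor $2$, even the cruder bound per subinterval suffices, so the proof stands as written.
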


\subsection{Proof of Theorem~\ref{theorem_adv_msfem}}
\label{sec:proof_theorem_adv_msfem}

To simplify notation, we denote by $u$, instead of $u^\eps$, the solution to~\eqref{pb_perforation_num}, and $u_H$, instead of $u^\eps_H$, its approximation, solution to~\eqref{varf_adv_msfem_dirichlet}.

We first note that~\eqref{varf_adv_msfem_dirichlet} is well-posed in view of the Lax-Milgram lemma and the estimates~\eqref{eq:montreal} and~\eqref{poincare_WH}. 
 
Let $\Pi_H f$ be the $L^2$-orthogonal projection of $f$ on the space of piecewise constant functions. We recall that we have the following estimate: there exists $C$ independent of $H$ and $f$ such that 
\begin{equation}
\|f-\Pi_H f\|_{L^2(\Omega)}\leq CH\|\nabla f\|_{L^2(\Omega)}.
\label{estimate_PiH}
\end{equation}
We define
$$
v_H(x) = \sum_{K\in \mathcal{T}_H} \Pi_H f \ \Psi^{\eps, K}_{\text{D}}(x) + \sum_{E\in\mathcal{E}^{\text{in}}_H} \left(\int_E u\right) \, \Phi^{\eps, E}_{\text{D}}(x),
$$
where $\Phi^{\eps, E}_{\text{D}}$ and $\Psi^{\eps, K}_{\text{D}}$ are respectively solutions to~\eqref{pb_adv_msfem_basis_dirichlet} and~\eqref{pb_adv_msfem_bubble_dirichlet}. We see from~\eqref{eq:spaces_dirichlet2} that $v_H\in V_H^{\text{adv bubble}}$. We next decompose the exact solution into
$$
u= v_H + \phi.
$$
Notice that $v_H$ satisfies
\begin{align}
&\int_E v_H = \int_E u, \quad\text{hence} \int_E\phi =0, \quad \text{for all $E\in\mathcal{E}^{\text{in}}_H$},
\nonumber
\\
&\left( \alpha \nabla v_H -\frac{1}{2} \widehat{b}^\eps \, v_H\right)\cdot n = \lambda^E, \quad\text{for all $E\in\mathcal{E}^{\text{in}}_H$},
\label{properties_vH}
\\
&-\alpha \Delta v_H + \widehat{b}^\eps\cdot\nabla v_H=\Pi_H f \quad\text{ on $K\cap \Omega^\eps$}, \quad \text{for all $K\in\mathcal{T}_H$},
\nonumber
\end{align}
where $\lambda^E$ is a constant (possibly different on each side of $E$).

In what follows, we make use of the notation $\displaystyle g_{|\eps}(x)=g\left(\frac{x}{\varepsilon}\right)$ for any function $g$. The constant $C$ denotes a constant independent of $\eps$, $H$ and $f$, that may vary from one line to another. We begin by estimating $\phi$.

\paragraph*{Step 1: Estimation of $\phi = u-v_H$:}
Using the approximation of $u$ given in the homogenization result~\eqref{estimation_u} and denoting $\widetilde{\phi}=\eps^2 w_{|\eps} f -v_H$, we have
\begin{align}
c_H(\phi,\phi)
&=c_H(u-\eps^2 w_{|\eps} f, \phi) +c_H(\eps^2 w_{|\eps} f -v_H, \phi)
\nonumber
\\
&=c_H(u-\eps^2 w_{|\eps} f, \phi)
\nonumber
\\
&\quad + \sum_{K\in\mathcal{T}_H}\int_{K\cap\Omega^\eps} \left(-\alpha \Delta \widetilde{\phi} + \widehat{b}^\eps\cdot \nabla \widetilde{\phi}\right)\phi + \int_{\partial(K\cap\Omega^\eps)} \left(\alpha \nabla \widetilde{\phi} -\frac{1}{2} \widehat{b}^\eps \widetilde{\phi}\right)\cdot n \, \phi
\nonumber
\\
&=c_H(u-\eps^2 w_{|\eps} f, \phi)
\nonumber
\\
&\quad + \sum_{K\in\mathcal{T}_H}\int_{K\cap\Omega^\eps} \left(-\alpha \Delta \widetilde{\phi} + \widehat{b}^\eps\cdot \nabla \widetilde{\phi}\right)\phi + \int_{(\partial K)\cap\Omega^\eps} \left(\alpha \nabla \widetilde{\phi} -\frac{1}{2} \widehat{b}^\eps \widetilde{\phi}\right)\cdot n \, \phi
\nonumber
\\
&=c_H(u-\eps^2 w_{|\eps} f, \phi)
+ \sum_{K\in\mathcal{T}_H} \int_{K\cap\Omega^\eps} \left(-\alpha \Delta \widetilde{\phi} + \widehat{b}^\eps\cdot \nabla \widetilde{\phi}\right) \phi
\nonumber
\\
&\quad + \eps^2 \sum_{K\in\mathcal{T}_H} \int_{(\partial K)\cap\Omega^\eps} \left(\alpha \nabla ( w_{|\eps} f) -\frac{1}{2} \widehat{b}^\eps( w_{|\eps} f) \right) \cdot n \, \phi,
\label{eq_3}
\end{align}
where we used the fact that $\phi=0$ on $\partial\Omega^\eps$ in the third line and~\eqref{properties_vH} in the last line. Using~\eqref{def_cH} and next that $\text{div} \ b \leq 0$, we write that
\begin{equation}
c_H(\phi,\phi)
=
\sum_{K\in\mathcal{T}_H}\int_{K\cap\Omega^\eps} \alpha |\nabla \phi|^2 -\frac{1}{2} \left( \text{div } \widehat{b}^\eps \right) \phi^2
\geq
\alpha |\phi|_{H^1_H(\Omega^\eps)}^2.
\label{ineq_21}
\end{equation}
Combining~\eqref{eq_3} and~\eqref{ineq_21}, we get
\begin{align}
\alpha |\phi|_{H^1_H(\Omega^\eps)}^2 & \leq c_H(u-\eps^2 w_{|\eps} f, \phi)+ \sum_{K\in\mathcal{T}_H} \int_{K\cap\Omega^\eps} \left(-\alpha \Delta \widetilde{\phi} + \widehat{b}^\eps\cdot \nabla \widetilde{\phi}\right) \phi
\nonumber
\\
&\quad + \eps^2 \sum_{E\in\mathcal{E}^{\text{in}}_H} \int_{E\cap\Omega^\eps} \left(\alpha \nabla (w_{|\eps} f) -\frac{1}{2} \widehat{b}^\eps (w_{|\eps} f) \right) \cdot n \ [[\phi]].
\label{ineq_22}
\end{align}
We successively bound the three terms of the right-hand side of~\eqref{ineq_22}. Roughly speaking:
\begin{itemize}
\item the first term is small because of the homogenization result~\eqref{estimation_u};
\item the second term is small because, at the leading order term in $\eps$, $-\alpha \Delta \widetilde{\phi} + \widehat{b}^\eps\cdot\nabla \widetilde{\phi} \simeq f - \Pi_H f$, which is small due to~\eqref{estimate_PiH};
\item estimating the third term is more involved, and uses the fact that $w$ is a periodic function. We are thus in position to apply our Lemma~\ref{lemma_periodic}.
\end{itemize}

\paragraph{Step 1a} The first term of the right-hand side of~\eqref{ineq_22} is estimated as follows. Denoting $\widetilde{u}=u-\eps^2w_{|\eps} f$, we write
\begin{multline*}
c_H(\widetilde{u},\phi)
=
\sum_{K\in\mathcal{T}_H} \int_{K\cap\Omega^\eps} \alpha \nabla \widetilde{u} \cdot\nabla \phi + \frac{1}{2} \left( \widehat{b}^\eps \cdot \nabla \widetilde{u} \right) \phi - \frac{1}{2} \left( \widehat{b}^\eps \cdot \nabla \phi \right) \widetilde{u}
\\
-\sum_{K\in\mathcal{T}_H}\int_{K\cap\Omega^\eps} \frac{1}{2} \, \widetilde{u} \, \phi \, \text{div } \widehat{b}^\eps,
\end{multline*}
thus
\begin{align}
\left| c_H(\widetilde{u},\phi) \right|
&\leq \alpha |\widetilde{u}|_{H^1(\Omega^\eps)} |\phi|_{H^1_H(\Omega^\eps)}
\nonumber
\\
& \quad + \eps^{-1} \|b\|_{L^{\infty}(Y\setminus\overline{\mathcal{O}})} \left( |\widetilde{u}|_{H^1(\Omega^\eps)} \|\phi\|_{L^2(\Omega^\eps)} + |\phi|_{H^1_H(\Omega^\eps)} \| \widetilde{u}\|_{L^2(\Omega^\eps)} \right)
\nonumber
\\
& \quad + \eps^{-2} \|\text{div} \, b\|_{L^\infty(Y\setminus\overline{\mathcal{O}})}\| \widetilde{u}\|_{L^2(\Omega^\eps)}\| \phi\|_{L^2(\Omega^\eps)}
\nonumber
\\
&\leq C \, |\widetilde{u}|_{H^1(\Omega^\eps)} \, |\phi|_{H^1_H(\Omega^\eps)}
\nonumber
\\
&\leq C\eps^{3/2}\mathcal{N}(f) \, |\phi|_{H^1_H(\Omega^\eps)},
\label{ineq_23}
\end{align}
where $C$ is independent of $\eps$, $H$ and $f$. We have used~\eqref{poincare_WH} and the assumption $\text{div} \, b \in L^\infty(Y\setminus\overline{\mathcal{O}})$ in the third line, and~\eqref{estimation_u} in the last line.

\paragraph{Step 1b} We now turn to the second term of the right-hand side of~\eqref{ineq_22}. Using the corrector equation~\eqref{pb_corrector_perforated}, we get
\begin{equation}
\label{eq:chocolat}
-\alpha \Delta \left(\eps^2 w_{|\eps} f\right) + \widehat{b}^\eps \cdot\nabla \left(\eps^2 w_{|\eps} f\right) = f + \eps (-2 \alpha \nabla w + b \, w)_{|\eps} \cdot \nabla f - \eps^2 \alpha \, w_{|\eps} \Delta f.
\end{equation}
Using~\eqref{properties_vH}, we deduce that
\begin{align*}
& \left| \sum_{K\in\mathcal{T}_H} \int_{K\cap\Omega^\eps} \left(-\alpha \Delta \widetilde{\phi} + \widehat{b}^\eps\cdot \nabla \widetilde{\phi}\right) \phi\right|
\\
&\leq \Big( \|f-\Pi_H f\|_{L^2(\Omega)} + 2\eps \alpha \|\nabla w\|_{L^\infty(Y \setminus \overline{\mathcal{O}})} \|\nabla f\|_{L^2(\Omega)}
\\
& \quad  
+ \eps^2 \alpha \| w\|_{L^\infty(Y \setminus \overline{\mathcal{O}})} \|\Delta f\|_{L^2(\Omega)} 
+ \eps \|b\|_{L^{\infty}(Y\setminus\overline{\mathcal{O}})} \| w\|_{L^\infty(Y \setminus \overline{\mathcal{O}})} \|\nabla f\|_{L^2(\Omega)} \Big) \, \|\phi\|_{L^2(\Omega^\eps)}
\\
&\leq C\eps \Big( H \|\nabla f\|_{L^2(\Omega)} + \eps \mathcal{N}(f) \Big) \, |\phi|_{H^1_H(\Omega^\eps)},
\end{align*}
where $C$ is independent of $\eps$, $H$ and $f$, and where $\mathcal{N}(f)$ is defined by~\eqref{def_Nf}. In the last line, we have used~\eqref{poincare_WH},~\eqref{estimate_PiH} and~\eqref{eq:regul_w_dir}.
We infer that 
\begin{equation}
\left|\sum_{K\in\mathcal{T}_H} \int_{K\cap\Omega^\eps} \left(-\Delta \widetilde{\phi} + \widehat{b}^\eps\cdot \nabla \widetilde{\phi}\right) \phi\right| \leq C \eps (H+\eps) \, \mathcal{N}(f) \, |\phi|_{H^1_H(\Omega^\eps)}
\label{ineq_24}
\end{equation}
for some $C$ independent of $\eps$, $H$ and $f$.

\paragraph{Step 1c} We now consider the third term of the right-hand side of~\eqref{ineq_22}. In view of the assumptions on the mesh, we first observe that, for any edge $E\in\mathcal{E}^{\text{in}}_H$, the function $\dis x\in E\rightarrow n\cdot \left(\alpha \nabla w -\frac{1}{2} \, b \, w\right)\left(\frac{x}{\eps}\right)$ is periodic with period $\eps \, q_E$, for some $q_E\in \N^\star$ satisfying $|q_E|\leq C$ for some constant $C$ independent of the mesh edges and of $H$. We denote $\dis \left\langle \left(\alpha \nabla w-\frac{1}{2} \, b \, w \right)_{|\eps}\cdot n \right\rangle_E$ the average of that function over one period, and decompose the third term of the right-hand side of~\eqref{ineq_22} as follows:
\begin{align}
&\eps^2 \sum_{E\in\mathcal{E}^{\text{in}}_H} \int_{E\cap\Omega^\eps} \left(\alpha \nabla ( w_{|\eps} f) -\frac{1}{2} \widehat{b}^\eps ( w_{|\eps} f) \right) \cdot n \, [[\phi]]
\nonumber
\\
&=
\eps \sum_{E\in\mathcal{E}^{\text{in}}_H} \int_{E\cap\Omega^\eps} \left(\left(\alpha \nabla w - \frac{1}{2} \, b \, w \right)_{|\eps} \cdot n - \left\langle \left(\alpha \nabla w-\frac{1}{2} \, b \, w\right)_{|\eps}\cdot n \right\rangle_E \right) f \, [[\phi]]
\nonumber
\\
& \quad + \eps \sum_{E\in\mathcal{E}^{\text{in}}_H} \left\langle \left(\alpha \nabla w-\frac{1}{2} \, b \, w\right)_{|\eps} \cdot n \right\rangle_E \int_{E\cap\Omega^\eps} f \, [[\phi]]
\nonumber
\\
&\quad + \eps^2 \alpha \sum_{E\in\mathcal{E}^{\text{in}}_H} \int_{E\cap\Omega^\eps} \left( w_{|\eps} \nabla f \cdot n \right) \, [[\phi]].
\label{ineq_31}
\end{align}
For some formulas below, we extend the function $\phi=u-v_H$ by 0 inside the perforations $B^\eps$, so that we can understand $\phi$ either as a function in $H^1_0(\Omega)$ or in $H^1_0(\Omega^\eps)$.

We consider the first term of the right-hand side of~\eqref{ineq_31}, which we evaluate essentially using the fact that it contains a periodic oscillatory function of zero mean. We claim that
\begin{multline}
\left|\int_{E\cap\Omega^\eps} \left(\left(\alpha \nabla w-\frac{1}{2} b \, w\right)_{|\eps}\cdot n - \left\langle \left(\alpha \nabla w-\frac{1}{2} b \, w\right)_{|\eps}\cdot n \right\rangle_E\right) f \, [[\phi]] \right|
\\
\leq C \sqrt{\eps} \, \|f\|_{H^1(E)} \| \, [[\phi]] \, \|_{H^{1/2}(E)},
\label{ineq_32}
\end{multline}
where $C$ is a constant independent of the edge $E$, $\eps$ and $H$. Indeed, we first note that $\phi$ vanishes on $E\cap B^\eps$, hence
\begin{multline}
\int_{E\cap\Omega^\eps} \left(\left(\alpha \nabla w-\frac{1}{2} b \, w\right)_{|\eps}\cdot n - \left\langle \left(\alpha \nabla w-\frac{1}{2} b \, w\right)_{|\eps}\cdot n \right\rangle_E\right) f \, [[\phi]]
\\
= \int_E \left(\left(\alpha \nabla w-\frac{1}{2} b \, w\right)_{|\eps} \cdot n - \left\langle \left(\alpha \nabla w-\frac{1}{2} b \, w\right)_{|\eps}\cdot n \right\rangle_E\right) f \, [[\phi]].
\label{ineq_33}
\end{multline}
Second, using the regularity~\eqref{eq:regul_w_dir} of $w$, we have that
\begin{multline}
\left| \int_E \left(\left(\alpha \nabla w-\frac{1}{2} b \, w\right)_{|\eps}\cdot n - \left\langle \left(\alpha \nabla w-\frac{1}{2} b \, w\right)_{|\eps}\cdot n \right\rangle_E\right) f \, [[\phi]] \right|
\\
\leq C \|f\|_{L^2(E)} \, \| \, [[\phi]] \, \|_{L^2(E)}.
\label{ineq_phi_L2}
\end{multline}
Third, suppose momentarily that $[[\phi]]\in H^1(E)\subset C^0(E)$. We infer from the fact that $\displaystyle \int_E [[\phi]]=0$ that $[[\phi]]$, and hence $f \, [[\phi]]$, vanishes at least at one point on $E$. In addition, the function 
$$
\left(\alpha \nabla w-\frac{1}{2} b \, w\right)_{|\eps}\cdot n - \left\langle \left(\alpha \nabla w-\frac{1}{2} b \, w\right)_{|\eps}\cdot n \right\rangle_E
$$ 
is periodic on $E$ (with a period $\eps \, q_E$ uniformly bounded with respect to $E\in\mathcal{E}^{\text{in}}_H$) and of zero mean. We are then in position to apply Lemma~\ref{lemma_periodic}, which yields, using~\eqref{eq:regul_w_dir}, 
\begin{align}
&\left|\int_E \left(\left(\alpha \nabla w-\frac{1}{2} b \, w\right)_{|\eps}\cdot n - \left\langle \left(\alpha \nabla w-\frac{1}{2} b \, w\right)_{|\eps}\cdot n \right\rangle_E\right) f \, [[\phi]] \right|
\nonumber
\\
&\leq
4\eps q_E \left\|\alpha \nabla w -\frac{1}{2} b \, w \right\|_{L^\infty(Y \setminus \overline{\mathcal{O}})} \left\| \nabla_E(f \, [[\phi]]) \right\|_{L^1(E)}
\nonumber
\\
&\leq C \eps\|f\|_{H^1(E)}\| \, [[\phi]] \, \|_{H^1(E)} 
\label{ineq_phi_H1}
\end{align}
where, for any function $g$, we have denoted $\nabla_E g= t_E\cdot\nabla g$ where $t_E$ is a unit tangential vector to the edge $E$. By interpolation between~\eqref{ineq_phi_L2} and~\eqref{ineq_phi_H1}, and using~\eqref{ineq_33}, we infer~\eqref{ineq_32}, with a constant $C$ (independent of the edge) which is independent from $\eps$ and $H$ by scaling arguments (see~\cite{lebris2013msfem} for details).

We deduce from~\eqref{ineq_32} that the first term of the right-hand side of~\eqref{ineq_31} satisfies 
\begin{align*}
&\left| \eps \sum_{E\in\mathcal{E}^{\text{in}}_H} \int_{E\cap\Omega^\eps} \left(\left(\alpha \nabla w-\frac{1}{2} b \, w\right)_{|\eps}\cdot n - \left\langle \left(\alpha \nabla w-\frac{1}{2} b \, w\right)_{|\eps}\cdot n \right\rangle_E\right) f \, [[\phi]] \right|
\\
&\leq C \eps^{3/2} \sum_{E\in\mathcal{E}^{\text{in}}_H} \|f\|_{H^1(E)} \| \, [[\phi]] \, \|_{H^{1/2}(E)}
\\
&\leq C\eps^{3/2} \left( \sum_{E\in\mathcal{E}^{\text{in}}_H} \|f\|_{H^1(E)}^2 \right)^{1/2} \left( \sum_{E\in\mathcal{E}^{\text{in}}_H} \| \, [[\phi]] \, \|^2_{H^{1/2}(E)} \right)^{1/2}
\\
&\leq C\eps^{3/2} \left( \sum_{E\in\mathcal{E}^{\text{in}}_H, \ K\in K_E} \frac{1}{H} \| f\|_{H^1(K)}^2 + H \|\nabla f\|_{H^1(K)}^2 \right)^{1/2}
\\
&\qquad\qquad \times \left( \sum_{E\in\mathcal{E}^{\text{in}}_H} \sum_{K\in K_E} \| \nabla \phi \|^2_{L^2(K)}\right)^{1/2}
\end{align*}
where we have used~\eqref{ineq_16} of Lemma~\ref{lemma_trace} and~\eqref{ineq_20} of Lemma~\ref{corollary_trace} (we recall that $K_E$ denotes the set of triangles sharing the edge $E$). We therefore obtain that the first term of the right-hand side of~\eqref{ineq_31} satisfies
\begin{align}
&\left| \eps \sum_{E\in\mathcal{E}^{\text{in}}_H} \int_{E\cap\Omega^\eps} \left( \left( \alpha \nabla w-\frac{1}{2} b \, w\right)_{|\eps} \cdot n - \left\langle \left(\alpha \nabla w-\frac{1}{2} b \, w\right)_{|\eps} \cdot n \right\rangle_E \right) f \, [[\phi]] \right|
\nonumber
\\
&\leq C\eps^{3/2} \left(\frac{1}{H} \| f\|_{H^1(\Omega)}^2 + H \|\nabla f\|_{H^1(\Omega)}^2 \right)^{1/2} \, |\phi|_{H^1_H(\Omega^\eps)}
\nonumber
\\
&\leq C\eps \left( \sqrt{\frac{\eps}{H}} \, \| f\|_{H^1(\Omega)} + \sqrt{\eps H} \, \|\nabla f\|_{H^1(\Omega)} \right) \, |\phi|_{H^1_H(\Omega^\eps)}.
\label{ineq_43}
\end{align}
The second term of the right-hand side of~\eqref{ineq_31} has no oscillatory character. This is why it is estimated using standard arguments for Crouzeix-Raviart finite elements (using that $\displaystyle \int_{E\cap\Omega^\eps} [[\phi]]=0$), and the regularity of $w$. Introducing, for each edge $E$, the constant $\dis c_E=|E|^{-1}\int_E f$, we bound the second term of the right-hand side of~\eqref{ineq_31} by
\begin{align}
& \left|\eps \sum_{E\in\mathcal{E}^{\text{in}}_H}\left\langle \left(\alpha \nabla w-\frac{1}{2} b \, w\right)_{|\eps}\cdot n \right\rangle_E \int_{E\cap\Omega^\eps} f \, [[\phi]]\right|
\nonumber
\\
&= \left|\eps \sum_{E\in\mathcal{E}^{\text{in}}_H}\left\langle \left(\alpha \nabla w-\frac{1}{2} b \, w\right)_{|\eps}\cdot n \right\rangle_E \int_{E\cap\Omega^\eps} (f-c_E) \, [[\phi]]\right|
\nonumber
\\
&\leq C\eps \sum_{E\in\mathcal{E}^{\text{in}}_H} \| \, [[\phi]] \, \|_{L^2(E)} \, \| f-c_E \|_{L^2(E)}
\nonumber
\\
&\leq C\eps \left( \sum_{E\in\mathcal{E}^{\text{in}}_H} \| \, [[\phi]] \, \|_{L^2(E)}^2 \right)^{1/2} \left( \sum_{E\in\mathcal{E}^{\text{in}}_H} \|f-c_E\|_{L^2(E)}^2 \right)^{1/2}
\nonumber
\\
&\leq C\eps \left( \sum_{E\in\mathcal{E}^{\text{in}}_H} \sum_{K\in K_E} H \|\nabla \phi\|_{L^2(K)}^2 \right)^{1/2} \left( \sum_{E\in\mathcal{E}^{\text{in}}_H, \ K\in K_E} H \| \nabla(f-c_E) \|_{L^2(K)}^2 \right)^{1/2}
\nonumber
\\
&\leq C\eps H \, |\phi|_{H^1_H(\Omega^\eps)} \|\nabla f\|_{L^2(\Omega)},
\label{ineq_41}
\end{align}
where we have used~\eqref{eq:regul_w_dir},~\eqref{ineq_19} of Lemma~\ref{corollary_trace} and~\eqref{ineq_17} of Lemma~\ref{lemma_trace}.

We are now left with the third term of the right-hand side of~\eqref{ineq_31}. This term has a prefactor $\eps^ 2$ and all we have to prove is that the term itself is bounded. Using again~\eqref{eq:regul_w_dir},~\eqref{ineq_19} of Lemma~\ref{corollary_trace} and~\eqref{ineq_16} of Lemma~\ref{lemma_trace}, we obtain
\begin{align}
&\left| \eps^2 \alpha \sum_{E\in\mathcal{E}^{\text{in}}_H} \int_{E\cap\Omega^\eps} \left( w_{|\eps} \nabla f\cdot n \right) \, [[\phi]] \right|
\nonumber
\\
&\leq C\eps^2 \left( \sum_{E\in\mathcal{E}^{\text{in}}_H} \| \, [[\phi]] \, \|_{L^2(E)}^2 \right)^{1/2} \left( \sum_{E\in\mathcal{E}^{\text{in}}_H} \|\nabla f\|_{L^2(E)}^2 \right)^{1/2}
\nonumber
\\
&\leq C\eps^2 \left( H \sum_{K\in\mathcal{T}_H} \| \nabla \phi\|_{L^2(K)}^2 \right)^{1/2} \left( \frac{1}{H} \sum_{K\in\mathcal{T}_H} \| \nabla f \|_{H^1(K)}^2 \right)^{1/2}
\nonumber
\\
&\leq C\eps^2 \, |\phi|_{H^1_H(\Omega^\eps)} \, \| \nabla f\|_{H^1(\Omega)}.
\label{ineq_42}
\end{align}
Collecting~\eqref{ineq_31},~\eqref{ineq_43},~\eqref{ineq_41} and~\eqref{ineq_42}, we infer that the third term of the right-hand side of~\eqref{ineq_22} satisfies
\begin{align}
& \left| \eps^2 \sum_{E\in\mathcal{E}^{\text{in}}_H} \int_{E\cap\Omega^\eps} \left( \alpha \nabla ( w_{|\eps} f) -\frac{1}{2} \widehat{b}^\eps ( w_{|\eps} f) \right) \cdot n \, [[\phi]] \right|
\label{ineq_25}  
\\
& \leq C\eps \left(\sqrt{\frac{\eps}{H}} \, \|f\|_{H^1(\Omega)} + \left(\eps + \sqrt{\eps H}\right) \|\nabla f\|_{H^1(\Omega)} + H\|\nabla f\|_{L^2(\Omega)} \right) |\phi|_{H^1_H(\Omega^\eps)}.
\nonumber
\end{align}

\paragraph{Conclusion of Step 1}
Combining~\eqref{ineq_22},~\eqref{ineq_23},~\eqref{ineq_24} and~\eqref{ineq_25}, we infer that
\begin{equation}
|\phi|_{H^1_H(\Omega^\eps)}^2\leq C\eps \left(\sqrt{\frac{\eps}{H}} + H+ \sqrt{\eps} \right)\left(\|f\|_{L^\infty(\Omega)}+\|\nabla f\|_{H^1(\Omega)}\right) |\phi |_{H^1_H(\Omega^\eps)}
\label{ineq_26}
\end{equation}
for some $C$ independent of $\eps$, $H$ and $f$. This ends the first step of the proof.

\paragraph{Step 2: Estimation of $u_H-v_H$:} Denoting $\phi_H=u_H-v_H$, we see that 
\begin{equation}
\label{eq:chocolat3}
\alpha |\phi_H|^2_{H^1_H(\Omega^\eps)} \leq c_H(\phi_H,\phi_H) = c_H(u_H-u,\phi_H) + c_H(u-v_H,\phi_H),
\end{equation}
where we recall that $c_H$ is defined by~\eqref{def_cH}. The second term is estimated writing that
\begin{align*}
& \left| c_H(u-v_H,\phi_H) \right|
\\
&=
\left| c_H(\phi,\phi_H) \right|
\\
& \leq \alpha |\phi|_{H^1_H(\Omega^\eps)} |\phi_H|_{H^1_H(\Omega^\eps)} + \frac{\| b \|_{L^\infty(Y \setminus \overline{\cal O})}}{2\eps} \left( |\phi|_{H^1_H(\Omega^\eps)} \| \phi_H \|_{L^2(\Omega^\eps)} + \| \phi \|_{L^2(\Omega^\eps)} | \phi_H |_{H^1_H(\Omega^\eps)} \right)
\\
& \qquad \qquad + \frac{\| \text{div} \, b \|_{L^\infty(Y \setminus \overline{\cal O})}}{2\eps^2} \| \phi \|_{L^2(\Omega^\eps)} \| \phi_H \|_{L^2(\Omega^\eps)}
\\
& \leq C |\phi|_{H^1_H(\Omega^\eps)} |\phi_H|_{H^1_H(\Omega^\eps)},
\end{align*}
where we have used~\eqref{poincare_WH} in the last line.
Using~\eqref{ineq_26}, we deduce that
\begin{multline}
\label{eq:chocolat2}
\left| c_H(u-v_H,\phi_H) \right| \\ \leq C\eps \left(\sqrt{\frac{\eps}{H}} + H+ \sqrt{\eps} \right)\left(\|f\|_{L^\infty(\Omega)}+\|\nabla f\|_{H^1(\Omega)}\right) |\phi_H|_{H^1_H(\Omega^\eps)}
\end{multline}
for some constant $C$ independent of $\eps$, $H$ and $f$.

We now consider the first term of the right-hand side of~\eqref{eq:chocolat3}. Since $\phi_H\in V_H^{\text{adv bubble}}$, we deduce from the discrete variational formulation~\eqref{varf_adv_msfem_dirichlet} that
\begin{align}
c_H(u_H-u,\phi_H)
&=
\int_{\Omega^\eps} f\phi_H + c_H(\eps^2 w_{|\eps} f-u,\phi_H) - c_H(\eps^2 w_{|\eps} f,\phi_H)
\nonumber
\\
&= \int_{\Omega^\eps} f\phi_H + c_H(\eps^2 w_{|\eps} f-u,\phi_H)
\nonumber
\\
&\quad - \sum_{K\in\mathcal{T}_H} \int_{K\cap\Omega^\eps} \left(-\alpha \Delta \left( \eps^2 w_{|\eps} f\right) + \widehat{b}^\eps \cdot \nabla \left(\eps^2 w_{|\eps} f\right) \right) \phi_H
\nonumber
\\
&\quad + \sum_{K\in\mathcal{T}_H} \int_{\partial(K\cap\Omega^\eps)} \phi_H \, n \cdot \Big( \alpha \nabla (\eps^2 w_{|\eps} f) - \frac{1}{2} \widehat{b}^\eps (\eps^2 w_{|\eps} f) \Big).
\label{eq_4}
\end{align}
Since $\phi_H=0$ on $\partial\Omega^\eps$, we can take the integral in the last term of~\eqref{eq_4} on $\partial K \cap \Omega^\eps$. Inserting~\eqref{eq:chocolat}, we obtain from~\eqref{eq_4} that
\begin{align}
c_H(u_H-u,\phi_H)
&=
c_H(\eps^2 w_{|\eps} f-u,\phi_H)
\nonumber
\\
& \quad -\sum_{K\in\mathcal{T}_H} \int_{K\cap\Omega^\eps} \Big( \eps (-2\alpha \nabla w + b \, w)_{|\eps}\cdot\nabla f - \eps^2 \alpha w_{|\eps} \Delta f \Big) \phi_H
\nonumber
\\
&\quad + \sum_{K\in\mathcal{T}_H} \int_{\partial K \cap\Omega^\eps} \phi_H \, n \cdot \Big( \alpha \nabla (\eps^2 w_{|\eps} f) - \frac{1}{2}\widehat{b}^\eps (\eps^2 w_{|\eps} f) \Big).
\label{eq_5}
\end{align}
We now successively bound the three terms of the right-hand side of~\eqref{eq_5}. Following the arguments of Step 1a, we obtain that
\begin{equation}
\left| c_H(\eps^2 w_{|\eps} f-u,\phi_H) \right| \leq C\eps^{3/2} \mathcal{N}(f) \, |\phi_H|_{H^1_H(\Omega^\eps)}.
\label{ineq_27}
\end{equation}
For the second term of the right-hand side of~\eqref{eq_5}, we use the fact that the first factor is bounded (using the regularity~\eqref{eq:regul_w_dir} of $w$) and that the second factor satisfies a Poincar\'e inequality (see~\eqref{poincare_WH}). We thus obtain
\begin{align}
&\left|\sum_{K\in\mathcal{T}_H}\int_{K\cap\Omega^\eps}\left(\eps (-2\alpha \nabla w+b \, w)_{|\eps}\cdot\nabla f - \eps^2 \alpha w_{|\eps} \Delta f \right)\phi_H\right|
\nonumber
\\
&\leq C\eps\sum_{K\in\mathcal{T}_H} \Big (\|\nabla f\|_{L^2(K\cap\Omega^\eps)}+\eps\|\Delta f\|_{L^2(K\cap\Omega^\eps)} \Big) \, \|\phi_H\|_{L^2(K \cap \Omega^\eps)}
\nonumber
\\
&\leq C\eps\|\nabla f\|_{H^1(\Omega)} \, \|\phi_H\|_{L^2(\Omega^\eps)}
\nonumber
\\
&\leq C\eps^2 \|\nabla f\|_{H^1(\Omega)} \, | \phi_H |_{H^1_H(\Omega^\eps)}.
\label{ineq_28}
\end{align}
Following the arguments of Step 1c, we get, similarly to~\eqref{ineq_25}, that
\begin{align}
&\left| \sum_{K\in\mathcal{T}_H} \int_{\partial(K\cap\Omega^\eps)} \phi_H \, n \cdot \Big(\alpha \nabla (\eps^2 w_{\eps} f) - \frac{1}{2} \widehat{b}^\eps (\eps^2 w_{|\eps} f) \Big) \right|
\label{ineq_29}
\\
&\leq C\eps \left(\sqrt{\frac{\eps}{H}} \, \|f\|_{H^1(\Omega)} + (\eps + \sqrt{\eps H}) \, \|\nabla f\|_{H^1(\Omega)} + H \|\nabla f\|_{L^2(\Omega)} \right) |\phi_H|_{H^1_H(\Omega^\eps)}.
\nonumber
\end{align}
Combining~\eqref{eq_5}, \eqref{ineq_27}, \eqref{ineq_28} and~\eqref{ineq_29}, we obtain that
$$
\left| c_H(u_H-u,\phi_H) \right| \leq C\eps \left( \sqrt{\frac{\eps}{H}} + H + \sqrt{\eps} \right) \left( \|f\|_{L^\infty(\Omega)} + \|\nabla f\|_{H^1(\Omega)} \right) |\phi_H |_{H^1_H(\Omega^\eps)}.
$$
Collecting this estimate with~\eqref{eq:chocolat3} and~\eqref{eq:chocolat2}, we get
\begin{equation}
|\phi_H|^2_{H^1_H(\Omega^\eps)} \leq C\eps \left( \sqrt{\frac{\eps}{H}} + H + \sqrt{\eps} \right) \left( \|f\|_{L^\infty(\Omega)} + \|\nabla f\|_{H^1(\Omega)} \right) |\phi_H |_{H^1_H(\Omega^\eps)}.
\label{ineq_30}
\end{equation}
 
\paragraph*{Conclusion}
We deduce from~\eqref{ineq_26} and~\eqref{ineq_30} that
$$
|u-u_H|_{H^1_H(\Omega^\eps)} \leq C\eps \left( \sqrt{\frac{\eps}{H}} + H + \sqrt{\eps} \right) \left( \|f\|_{L^\infty(\Omega)} + \|\nabla f\|_{H^1(\Omega)} \right).
$$
In view of~\eqref{poincare_WH} and since the injection $H^2(\Omega) \subset C^0(\overline{\Omega})$ is continuous in dimension $d=2$, the above bound yields the desired estimate~\eqref{estimate_adv_msfem}. This concludes the proof of Theorem~\ref{theorem_adv_msfem}.

\section{Proof of the well-posedness of~\eqref{pb_adv_msfem_basis_dirichlet} and~\eqref{pb_adv_msfem_bubble_dirichlet}}
\label{section_definition_adv_msfem_basis_functions}

Consider an element $K \in \mathcal{T}_H$, and let $n_K$ be the number of inner edges of that element. We denote $\dis V_K = \{u\in H^1(K), \ \ u=0 \text{ in } K \cap B^\eps \ \text{ and } \ u=0 \text{ on } \mathcal{E}^{\text{ext}}_H \}$. The variational formulation of~\eqref{pb_adv_msfem_basis_dirichlet} (resp.~\eqref{pb_adv_msfem_bubble_dirichlet}) is of the following form:
\begin{align}
  &\text{Find } \left( u_H, \left[ \lambda^E \right]_{E\in\mathcal{E}^{\text{in}}_H} \right) \in V_K\times\R^{n_K} \text{ such that}
  \nonumber
\\
& \forall v_H \in V_K, \qquad c_K(u_H,v_H) -\sum_{E\in \mathcal{E}^{\text{in}}_H} \lambda^E \int_E v_H= F(v_H),
\label{eq:def_pb_local}
\\
& \text{for all $E\in \mathcal{E}^{\text{in}}_H$, for all $\mu^E \in \R$}, \qquad \mu^E \int_E u_H = \ell_E(\mu^E),
\nonumber
\end{align}
where 
$$
c_K(u_H,v_H) = \int_{K\cap\Omega^\eps} \alpha\nabla u_H\cdot\nabla v_H  + \frac{1}{2}\left(\widehat{b}^\eps\cdot\nabla u_H \right)v_H - \frac{1}{2} \left(\widehat{b}^\eps \cdot\nabla v_H \right) u_H - \frac{1}{2} u_H \, v_H \, \text{div } \widehat{b}^\eps
$$
is the element-wise bilinear form corresponding to $c_H$ defined by~\eqref{def_cH}. The linear form $F$ in~\eqref{eq:def_pb_local} reads $\dis F(v_H) = \int_{K\cap\Omega^\eps} g \, v_H$ with $g \equiv 0$ in the case of~\eqref{pb_adv_msfem_basis_dirichlet} (resp. $g \equiv 1$ in the case of~\eqref{pb_adv_msfem_bubble_dirichlet}). The linear form $\ell_E$ vanishes in the case of~\eqref{pb_adv_msfem_bubble_dirichlet}, and $\ell_E(\mu^E) = \delta_{E,E'} \, \mu^E$ in the case of~\eqref{pb_adv_msfem_basis_dirichlet}, where $E'$ is a fixed edge.

We write~\eqref{eq:def_pb_local} in the following saddle-point form: find $(u_H,[\lambda^E]_{E\in\mathcal{E}^{\text{in}}_H}) \in V_K\times\R^{n_K}$ such that
\begin{align*}
& \forall v_H \in V_K, \qquad c_K(u_H,v_H) + c_E\left(v_H,(\lambda^E)_E\right) = F(v_H),
\\
& \text{for all $E\in \mathcal{E}^{\text{in}}_H$, for all $\mu^E \in \R$}, \qquad c_E\left(u_H,(\mu^E)_E\right) = -\ell_E(\mu^E),
\end{align*}
where
$$
c_E\left(v_H,(\mu^E)_E\right) = -\sum_{E\in \mathcal{E}^{\text{in}}_H} \mu^E \int_E v_H.
$$
We observe that the bilinear form $c_K$ is coercive on $V_K$ as soon as $K$ intersects the perforations. Indeed, for any $u_H\in V_K$, we have
$$
c_K(u_H,u_H)
=
\int_{K\cap\Omega^\eps}\alpha|\nabla u_H|^2 -\frac{1}{2} \left( \text{div }\widehat{b}^\eps \right) u_H^2
\geq
\alpha \int_{K\cap\Omega^\eps} |\nabla u_H|^2
\geq
C \| u_H \|^2_{H^1(K\cap\Omega^\eps)},
$$
where we used that $\text{div } \widehat{b}^\eps \leq 0$ and a Poincar\'e inequality on $K\cap\Omega^\eps$. To show that~\eqref{eq:def_pb_local} is well-posed, we are going to apply~\cite[Theorem 2.34, p100]{ern2004theory}, and we are thus left with showing that the bilinear form $c_E$ satisfies the inf-sup condition
\begin{equation}
  \label{eq:inf-sup}
\inf_{(\mu^E)_E\in\R^{n_K}}\sup_{v_H\in V_K} \frac{c_E\left(v_H,(\mu^E)_E\right)}{\left\| (\mu^E)_E \right\| \ \|v_H\|_{H^1(K\cap\Omega^\eps)}} \geq \gamma > 0.
\end{equation}
To show~\eqref{eq:inf-sup}, we proceed as follows. Take some $(\mu^E)_E \in \R^{n_K}$ and introduce $\dis v_H = - \sum_{E\in \mathcal{E}^{\text{in}}_H} \mu^E \, \Phi^{\eps,E}_0$, where $\Phi^{\eps,E}_0$ is the solution to~\eqref{pb_msfem_basis_dirichlet}. We then have $\dis \|v_H\|_{H^1(K\cap\Omega^\eps)} \leq C_{\rm IS} \left\| (\mu^E)_E \right\|$ and 
$$
c_E\left(v_H,(\mu^E)_E\right)
=
\sum_{E\in \mathcal{E}^{\text{in}}_H} (\mu^E)^2
\geq
\frac{1}{C_{\rm IS}} \|v_H\|_{H^1(K\cap\Omega^\eps)} \ \left\| (\mu^E)_E \right\|.
$$
This implies~\eqref{eq:inf-sup} and thus concludes the proof.

\bibliographystyle{plain}
\bibliography{biblio_perforation}
\end{document}